\def\rit{\mathbb{R}}
\def\nit{\mathbb{N}}
\newenvironment{sqcases}{%
	\matrix@check\sqcases\env@sqcases
}{%
\endarray\right.%
}
\def\env@sqcases{%
	\let\@ifnextchar\new@ifnextchar
	\left\lbrack
	\def\arraystretch{1.2}%
	\array{@{}l@{\quad}l@{}}%
}
\begin{document}
\title{Invex Optimization Revisited
}


\author{Ksenia Bestuzheva   \and
        Hassan Hijazi 
}


\institute{K. Bestuzheva  \and H. Hijazi \at
              The Australian National University \\
              Data61, CSIRO \\
              ACTON, 2601, Canberra, Australia\\
              \email{u5647146@anu.edu.au}           
           \and
           H. Hijazi \at
           Los Alamos National Laboratory,\\
           Los Alamos, NM 87544, USA\\
              \email{hassan.hijazi@anu.edu.au}           
}

\date{Received: date / Accepted: date}

\maketitle
	\begin{abstract}
	Given a non-convex optimization problem, we study conditions under which every Karush-Kuhn-Tucker (KKT) point is a global optimizer. This property is known as KT-invexity and allows to identify the subset of problems where an interior point method always converges to a global optimizer. In this work, we provide necessary conditions for KT-invexity in n-dimensions and show that these conditions become sufficient in the two-dimensional case. As an application of our results, we study the Optimal Power Flow problem, showing that under mild assumptions on the variable's bounds, our new necessary and sufficient conditions are met for problems with two degrees of freedom.
	\end{abstract}

	\section*{Notations}
	\begin{tabular}{ll}
	$\partial S$ & boundary of a set $S$.\\	
	$x_i$ & $i$th component of vector $\vec x$.\\	
	$f'_{x_i} = \frac{\partial f}{\partial x_i}$ & partial derivative of $f$ with respect to $x_i$.\\	
	$||\vec x||$ & Euclidean norm of vector $\vec x$.\\	
	$\vec x\cdot\vec y$ & the dot product of vectors $\vec x$ and $\vec y$.\\	
	$\vec x^T$ & the transpose of vector $\vec x$.\\	
	$\overline{AB}$ & a segment between two points.\\	
	$2\nit, ~2\nit{+}1$ & the sets of even and odd numbers.\\	
	$f'_-(x), f'_+(x)$ & left and right derivatives of $f$.\\	
	$sign(x)$ & the sign function.
	\end{tabular}
	\section{Introduction}
	Convexity plays a central role in mathematical optimization. Under constraint qualification conditions \cite{wang2013constraint}, the Karush-Kuhn-Tucker (KKT) necessary optimality conditions become also sufficient for convex programs \cite{boyd2004convex}. In addition, convexity of the constraints is used to prove convergence (and rates of convergence) of specialized algorithms \cite{nesterov1994interior}. However, real-world problems often describe non-convex regions, and relaxing the convexity assumption while maintaining some optimality properties is highly desirable.
	
	One such property, called Kuhn-Tucker invexity, is the sufficiency of KKT conditions for global optimality:
	
	\begin{definition}\cite{martin1985essence}
		An optimization problem is said to be Kuhn-Tucker invex (KT-invex) if every KKT point is a global optimizer.
	\end{definition}
	
	Various notions of generalized convexity have been proposed in the literature. Early generalizations include pseudo- and quasi-convexity introduced by Mangasarian in \cite{mangasarian1965pseudo} where he also proves that problems with a pseudo-convex objective and quasi-convex constraints are KT-invex. Hanson \cite{hanson1981sufficiency} defined the concept of invex functions and gave a sufficient condition for KT-invexity, which was relaxed by Martin \cite{martin1985essence} in order to obtain a condition that is both necessary and sufficient. Later on, Craven \cite{craven1985invex} investigated the properties of invex functions.
	
	These ideas inspired more research on generalized convexity. K-invex \cite{craven1981invex}, preinvex \cite{ben1986invexity}, B-vex \cite{bector1991b}, V-invex \cite{jeyakumar1992generalised}, (p,r)-invex \cite{antczak2001p} and other types of functions and their roles in mathematical optimization.
	
	
	However, to the best of our knowledge, there are no computationally efficient procedures to check KT-invexity in practice even when restricted to two-dimensional spaces. To address this problem, we propose a new set of conditions expressed in terms of the behavior of the objective function on the boundary of the feasible set. We prove that these conditions are necessary and, for two-dimensional problems, sufficient for KT-invexity.
	
%
%
%
%
	The paper is organized as follows. In Section \ref{sec:KTinv} we introduce the notion of boundary-invexity and study its connection to the local optimality of KKT points. Here we also establish the connection between global optimality on the boundary and in the interior. Section \ref{section_crossprod} gives the definition of a two-dimensional cross product. In Section \ref{sec:bnd} we 
	define a parametrization of the boundary curve. In Section \ref{sec:splitting} we study the behavior of concave functions on a line and present some results on boundary-optimality. Section \ref{sec:main} presents the main theorem establishing the sufficiency of boundary-invexity for two-dimensional problems. Finally, Section \ref{sec:app} investigates boundary-invexity of the Optimal Power Flow problem and Section \ref{sec:conclusion} concludes the paper.
	
	\section{Conditions for Kuhn-Tucker invexity}\label{sec:KTinv}
	
	Consider the optimization problem:
	
	\begin{align*}\label{nlpn}
	\text{max } & f(\vec x) \\
	\text{ s.t. } & g_{i}(\vec x) \leq 0 ~\forall i = 1..m \tag{NLP} \\
	&\vec x \in \rit^{n},
	\end{align*}
	where all functions $f(\vec x)$, $g(\vec x)$ and $h(\vec x)$ are twice continuously differentiable and $f(\vec x)$ is concave. The results in this paper can be extended to problems with quasiconcave objective functions since only convexity of the superlevel sets of $f$ is used in the proofs.
	
	Let $F$ denote the feasible set of (\ref{nlpn}).
	
	\begin{definition}\cite{wright1999numerical}
		A solution $\vec x^*$ of problem (\ref{nlpn}) is said to satisfy Karush-Kuhn-Tucker (KKT) conditions if there exist constants $\mu_i ~(i = 1,...,m)$, called KKT multipliers, such that
		
		\begin{align}\label{kkt_conditions1}& \nabla f(\vec x^*) = \sum_{i=1}^{m} \mu_i\nabla g_i(\vec x^*), \\
		& g_i(\vec x^*) \leq 0, ~\forall i = 1,...,m, \label{kkt_conditions2}\\
		& \mu_i \geq 0,  ~\forall i = 1,...,m,\\
		& \mu_ig_i(\vec x) = 0,  ~\forall i = 1,...,m.\label{kkt_conditions3}
		\end{align}
	\end{definition}
	
	Points that satisfy KKT conditions are referred to as KKT points.
	\begin{definition}\cite{wright1999numerical}
		A point $\vec x^* \in \rit^n$ is a local maximizer for \eqref{nlpn} if $\vec x^* \in F$ and there is a neighborhood $N(\vec x^*)$ such that $f(\vec x) \leq f(\vec x^*)$ for $\vec x \in N(\vec x^*) \cap F$.
	\end{definition}

Let us emphasize that checking local optimality is NP-hard in general:
	
	\begin{theorem}\cite{pardalos1988checking}\label{th:nphard}
		The problem of checking local optimality for a feasible solution of (\ref{nlpn}) is NP-hard.
	\end{theorem}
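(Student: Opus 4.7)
The plan is to establish NP-hardness via polynomial-time reduction from a known NP-hard decision problem, following the strategy of Pardalos and Schnitger. The idea is to encode instances of a hard combinatorial problem as (\ref{nlpn}) instances for which checking whether a designated feasible point $\vec x^*$ is a local maximizer amounts, in essence, to certifying the nonexistence of an improving feasible direction -- precisely the task that captures the combinatorial hardness.

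First, I would choose a convenient source problem. Two natural candidates are 3-SAT and the decision version of indefinite quadratic minimization (equivalently, matrix copositivity, whose NP-hardness is due to Murty and Kabadi). Copositivity is attractive because the reduction becomes very direct: given a symmetric matrix $M \in \rit^{n \times n}$, decide whether $\vec x^T M \vec x \geq 0$ for all $\vec x \geq \vec 0$.

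Next, from an instance of the source problem I would construct an (\ref{nlpn}) whose objective $f(\vec x)$ is concave (to respect the problem class) and whose constraints $g_i$ encode the combinatorial structure. For the copositivity source, a natural construction uses a linear, hence concave, objective $f(\vec x) = \vec c^T \vec x$ together with constraints $-x_i \leq 0$, $\|\vec x\|^2 - 1 \leq 0$, and an additional reverse-convex inequality of the form $-\vec x^T M \vec x \leq 0$. At the candidate point $\vec x^* = \vec 0$, I would show that local maximality holds if and only if there is no admissible direction $\vec d \geq \vec 0$ with $\vec d^T M \vec d < 0$, i.e.\ iff $M$ is copositive. The vector $\vec c$ and the exact form of the reverse-convex constraint must be tuned so that $\vec x^*$ is genuinely feasible, so that an infeasible descent direction translates into a witness against copositivity, and so that all $g_i$ are twice continuously differentiable as required.

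Finally, I would verify that the construction is polynomial in the size of the source instance and conclude NP-hardness. The main obstacle in this plan is keeping the non-convexity confined to the constraints while preserving concavity of $f$: one must route the combinatorial ``hard core'' through the reverse-convex inequality without accidentally trivialising the local-optimality test at $\vec x^*$ (for instance, by letting $\vec x^*$ lie in the interior, which would make local optimality decidable by a local gradient check). Once the encoding is correctly set up, the equivalence between local optimality at $\vec x^*$ and the answer to the source instance completes the polynomial-time reduction, and Theorem \ref{th:nphard} follows.
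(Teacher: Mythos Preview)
The paper does not prove Theorem~\ref{th:nphard}; it is quoted from \cite{pardalos1988checking} as background, so there is no in-paper argument to compare against.

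Your overall plan---a polynomial reduction from an NP-hard source problem such as matrix copositivity, with the nonconvexity routed through the constraints so that $f$ can stay concave---is the right shape and matches how such results are proved in the literature. The concrete construction you sketch, however, has the directionality backwards. With constraints $-x_i\le 0$ and $-\vec x^{T} M\vec x\le 0$ and candidate $\vec x^{*}=\vec 0$, the feasible directions from the origin are $\{\vec d\ge \vec 0:\vec d^{T} M\vec d\ge 0\}$. If $M$ is \emph{not} copositive this cone is \emph{smaller} than the nonnegative orthant, so the origin becomes \emph{easier}, not harder, to certify as a local maximizer of a linear objective. No choice of $\vec c$ yields the equivalence ``$\vec 0$ is a local maximizer iff $M$ is copositive'': for $\vec c\le \vec 0$ the origin is always a local maximizer over $\vec x\ge \vec 0$, while for any $\vec c$ with a positive entry the origin can fail to be a local maximizer even when $M$ is copositive.

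A simple repair is an epigraph variable: maximize $f(\vec x,t)=-t$ subject to $-x_i\le 0$ and $\vec x^{T} M\vec x - t\le 0$. At $(\vec 0,0)$, a nearby feasible point with $-t>0$ exists iff some $\vec d\ge \vec 0$ satisfies $\vec d^{T} M\vec d<0$, i.e.\ iff $M$ is not copositive; hence $(\vec 0,0)$ is a local maximizer iff $M$ is copositive. This keeps $f$ linear (hence concave) and confines the nonconvexity to a single smooth constraint, fitting the (\ref{nlpn}) hypotheses.
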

	
In this work, we try to investigate necessary and sufficient conditions that allow us to circumvent the negative result presented in Theorem \ref{th:nphard} by identifying problems where KKT points are provably global optimizers.
	\subsection{Weak boundary-invexity}
	
	For each non-convex constraint $g_i(x)\le 0$ 
	define the problem:
	
	

	\begin{align*}\label{nlpin}
	& \min ~f(\vec x) \tag{NLP$_{i}$} \\
	& s.t. ~g_i(\vec x) = 0.
	\end{align*} 
	
	
	
	\begin{definition} (Weak boundary-invexity)
		Problem (\ref{nlpn}) is weakly boundary-invex if (\ref{nlpin}) is unbounded or at least one of the following holds for its global minimum $\vec x^*$:

	\begin{enumerate}
		\item $\vec x^*$ is infeasible for (\ref{nlpn}),
		\item $\vec x^*$ is not a strict minimizer,
		\item the KKT multiplier for $\vec x^*$ in (\ref{nlpin}) is non-negative,
		\item there exist constraints $g_j(\vec x) \le 0,~j\ne i$ in (\ref{nlpn}) that are active at $\vec x^*$.
	\end{enumerate}		
	\end{definition}


%
	
		(\ref{nlpin}) is still a non-convex problem, and finding its global optimum can be NP-hard in general. However, in some special cases (\ref{nlpin}) can be more tractable than \eqref{nlpn} since we are restricting the feasible region to one of its boundaries.

	For instance, when both $f(\vec x)$ and $g_i(\vec x)$ are quadratic functions we can apply an extension of the S-lemma:

	\begin{theorem}\cite{Xia2016}
		Let $f(\vec x) = \vec x^TA\vec x + a^T\cdot\vec x + c$ and $g(\vec x) = \vec x^TB\vec x + b^T\cdot\vec x + d$ be two quadratic functions having symmetric matrices $A$ and $B$. If $g(\vec x)$ takes both positive and negative values and $B\neq 0$, then the following two statements are equivalent:

		\begin{enumerate}
			\item $(\forall \vec x \in \rit^n) ~g(\vec x) = 0 \implies f(\vec x) \geq 0$,
			\item There exists a $\mu \in \rit$ such that $f(\vec x) + \mu g(\vec x) \geq 0, ~\forall \vec x \in \rit^n$.
		\end{enumerate}
	\end{theorem}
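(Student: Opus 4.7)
The direction (2) $\Rightarrow$ (1) is immediate: substituting any $\vec x$ with $g(\vec x)=0$ into $f(\vec x)+\mu g(\vec x)\geq 0$ yields $f(\vec x)\geq 0$. I would therefore focus entirely on the nontrivial direction (1) $\Rightarrow$ (2), following the classical convex-analytic strategy that underlies all extensions of Yakubovich's S-lemma.

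The plan is to homogenize by introducing an auxiliary scalar $t$ and forming
\[
\tilde f(\vec x,t) = \vec x^T A\vec x + (a^T\vec x)\,t + ct^2, \qquad \tilde g(\vec x,t) = \vec x^T B\vec x + (b^T\vec x)\,t + dt^2,
\]
so that $f$ and $g$ are recovered on the slice $t=1$. I would then study the joint image
\[
S = \{(\tilde f(\vec x,t),\tilde g(\vec x,t)) : (\vec x,t)\in\rit^{n+1}\}\subset\rit^2
\]
and argue via a Dines-type theorem that, since $\tilde f$ and $\tilde g$ are purely quadratic forms in the $(n+1)$-dimensional variable $(\vec x,t)$ and $n+1\geq 2$, the set $S$ (or at least its closure) is a convex cone in $\rit^2$.

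Given convexity of $\overline S$, the hypothesis $g(\vec x)=0 \Rightarrow f(\vec x)\geq 0$ shows that $\overline S$ does not meet the open ray $\{(u,0):u<0\}$ originating from the image of points with $t=1$. A separating hyperplane argument then produces $\lambda,\mu\in\rit$, not both zero, with $\lambda\tilde f(\vec x,t)+\mu\tilde g(\vec x,t)\geq 0$ on all of $\rit^{n+1}$. The assumptions $B\neq 0$ and that $g$ changes sign are exactly what is needed to exclude $\lambda=0$: if $\lambda=0$ then $\mu\tilde g\geq 0$ globally, and specializing to $t=1$ contradicts that $g$ takes both positive and negative values. Normalizing $\lambda=1$ and restricting to the slice $t=1$ then yields $f(\vec x)+\mu g(\vec x)\geq 0$ for all $\vec x\in\rit^n$, as required.

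The main obstacle I anticipate is the convexity step. The classical Dines theorem guarantees convexity of the image of two homogeneous quadratic forms when the ambient dimension is at least two, but in the equality S-lemma one must be careful with boundary behavior: closure points arising from letting $t\to 0$ could in principle produce a separating functional that is valid only asymptotically and fails to correspond to a finite multiplier $\mu$ at $t=1$. The task of pinning down exactly where the hypotheses $B\neq 0$ and the sign-change of $g$ (weaker than the Slater condition required by the inequality S-lemma) kick in to rule out this pathology, and to guarantee that the separator lifts to a genuine finite $\mu$, is where the subtleties would concentrate.
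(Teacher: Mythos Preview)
The paper does not prove this theorem at all: it is quoted verbatim from \cite{Xia2016} and used as a black box, so there is no ``paper's own proof'' to compare your proposal against. Your sketch is essentially the standard route taken in the S-lemma literature (homogenize, invoke Dines-type joint-range convexity for two quadratic forms, separate, then use the sign-change hypothesis on $g$ to rule out $\lambda=0$), and it is the right architecture for the result.

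That said, since you asked for a technical assessment: the place where your outline is genuinely incomplete is the one you yourself flag. After homogenizing, the hypothesis $g(\vec x)=0\Rightarrow f(\vec x)\ge 0$ controls the slice $t\neq 0$ (by homogeneity $\tilde f(\vec x,t)=t^2 f(\vec x/t)$, $\tilde g(\vec x,t)=t^2 g(\vec x/t)$), but it says nothing directly about the limit set $t=0$, where $\tilde g(\vec x,0)=\vec x^TB\vec x$ can vanish without forcing $\vec x^TA\vec x\ge 0$. Showing that the open ray $\{(u,0):u<0\}$ is nonetheless disjoint from the \emph{closure} of the joint image, and that the resulting separating hyperplane has a strictly positive coefficient on $\tilde f$, is precisely the content of the Xia--Wang--Sheu argument and is where the assumption $B\neq 0$ does real work beyond merely excluding $\lambda=0$. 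Your final paragraph correctly identifies this as the crux but does not resolve it; filling that gap is the substance of the cited reference.
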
	
	Using this theorem and based on the approach described in \cite{Xia2016}, (\ref{nlpin}) can be reformulated as a Semidefinite Program and thus solved efficiently.
	
	\subsection{Necessary condition for KT-invexity}
	
	\begin{theorem} (Necessary condition)
		If (\ref{nlpn}) is KT-invex, then it is weakly boundary-invex.
	\end{theorem}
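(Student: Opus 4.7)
The plan is to prove the contrapositive: assuming (NLP) fails to be weakly boundary-invex, I will exhibit a KKT point of (NLP) that is not a global maximizer. Unpacking the negation yields a non-convex constraint $g_i\le 0$ for which (NLP$_i$) is bounded and its global minimizer $\vec x^*$ is (i) feasible for (NLP), (ii) a strict minimizer of $f$ on $\{g_i=0\}$, (iii) free of any other active constraints, and (iv) equipped with a strictly negative Lagrange multiplier, i.e.\ $\nabla f(\vec x^*)=\lambda\nabla g_i(\vec x^*)$ with $\lambda<0$ (which in particular forces $\nabla g_i(\vec x^*)\neq 0$).

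The next step is geometric. Since $\lambda<0$ and $\nabla g_i(\vec x^*)$ is the outward normal to $F$ at $\vec x^*$, the gradient $\nabla f(\vec x^*)$ points strictly into the interior of $F$. Coupling this first-order information with the strict minimality in (ii) (which supplies a positive second-order term for $f$ along the tangent directions to $\{g_i=0\}$) and with the concavity of $f$, a short Taylor expansion shows that there is a neighborhood $U$ of $\vec x^*$ with $f(\vec x)>f(\vec x^*)$ for every $\vec x\in(F\cap U)\setminus\{\vec x^*\}$. Hence $\vec x^*$ is an isolated strict local minimum of $f$ on $F$; in particular it is not itself a KKT point of (NLP), since the unique multiplier candidate at $\vec x^*$ under (iii) is the negative $\lambda$.

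The main difficulty, and the step I expect to be the hardest, is converting this local picture into an explicit KKT point of (NLP) with objective strictly below the global maximum. My plan is to exploit the isolation of $\vec x^*$ by tracking the connected component $W_c$ of $\{\vec x\in F: f(\vec x)\le c\}$ that contains $\vec x^*$: for $c$ just above $f(\vec x^*)$, $W_c$ is a tiny neighborhood of $\vec x^*$ in $F$, and it grows as $c$ increases. Because every interior critical point of the concave $f$ is its unconstrained maximum, $W_c$ can never absorb an interior critical point whose value lies below the global maximum, so the first topological transition of $W_c$ must occur at a boundary critical point $\vec y$; Morse-type reasoning then identifies $\vec y$ as a local maximizer of $f$ on $F$ and, under the constraint qualification implicit in the paper's KKT setup, a KKT point of (NLP) whose $f$-value is strictly less than the global maximum. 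This contradicts KT-invexity. The remaining technicalities are a rigorous mountain-pass-style formalization of the level-set transition and a mild compactness assumption to guarantee existence of the global maximum (absent which, (NLP) would be unbounded and KT-invexity would in any case fail to constrain (NLP) in a useful way).
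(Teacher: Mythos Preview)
Your proposal rests on a sign error in reading the paper's multiplier convention, and that error sends you down an unnecessarily difficult path.

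In the paper, the Lagrange condition for (NLP$_i$) is written as $\nabla f(\vec x^*)=-\lambda_i\,\nabla g_i(\vec x^*)$, so ``the KKT multiplier in (NLP$_i$) is negative'' means $\lambda_i<0$, i.e.\ $\nabla f(\vec x^*)=\mu_i\,\nabla g_i(\vec x^*)$ with $\mu_i:=-\lambda_i>0$. Since $g_i$ is the only active constraint at $\vec x^*$ (condition~4 fails), setting $\mu_j=0$ for $j\neq i$ shows that $\vec x^*$ is \emph{itself} a KKT point of (NLP). Your claim that ``the unique multiplier candidate at $\vec x^*$ \ldots\ is the negative $\lambda$'' is therefore backwards: under the paper's convention $\nabla f(\vec x^*)$ points along the \emph{outward} normal $\nabla g_i(\vec x^*)$, not into $F$.

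Once you see this, the proof is two lines, exactly as in the paper: because no other constraint is active at $\vec x^*$, there are points $\hat{\vec x}$ near $\vec x^*$ with $g_i(\hat{\vec x})=0$ that remain feasible for (NLP); and because $\vec x^*$ is a \emph{strict} global minimizer of $f$ on $\{g_i=0\}$, any such $\hat{\vec x}$ satisfies $f(\hat{\vec x})>f(\vec x^*)$. Hence $\vec x^*$ is a KKT point of (NLP) that is not a global maximizer, contradicting KT-invexity.

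Your mountain-pass/level-set argument is thus not needed, and as written it carries real gaps: it invokes Morse-type transitions and compactness of $F$ that the paper does not assume in the $n$-dimensional setting, and it would require a constraint qualification at the putative boundary critical point $\vec y$ to promote it to a KKT point. None of this machinery is necessary once the sign is corrected.
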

	\begin{proof}
		We will proceed by contradiction, assume that (\ref{nlpn}) is KT-invex but not weakly boundary-invex. Thus, there exists a point $\vec x^* \in F$ which is a global minimizer and therefore a KKT point of (\ref{nlpin}):
		
		\begin{align*}& \nabla f(\vec x^*) = -\lambda_i\nabla g_i(\vec x^*), \\
		& g_i(\vec x^*) = 0,\\
		& \lambda_i < 0.
		\end{align*}
		
		Let $\mu_i = -\lambda_i$. Since $g_i$ is the only active constraint at $\vec x^*$, we can set $\mu_j = 0, ~j=1,...,i-1,i+1,...,m$ and obtain the following system:	
		\begin{align*}& \nabla f(\vec x^*) = \sum_{j=1}^m \mu_j\nabla g_j(\vec x^*), \\
		& g_j(\vec x^*) = 0, ~\forall j=1,...,m,\\
		& \mu_j \geq 0, ~\forall j=1,...,m,
		\end{align*}
		implying that $\vec x^*$ is a KKT point of (\ref{nlpn}).		
		Since no other constraints are active at $\vec x^*$, there exists a point $\hat{\vec x}$ in the neighborhood of $\vec x^*$, such that 
		$$g_i(\hat{\vec x}) = 0 \text{ and } \hat{\vec x} \in F$$
		Since $\vec x^*$ is a strict global minimizer in \eqref{nlpin}, we have that $f(\vec x^*) < f(\hat{\vec x})$ which contradicts with (\ref{nlpn}) being KT-invex.

%
%
%
%
%
	\end{proof}
	\qed
	
	\subsection{Connection between boundary and interior optimality}

	\begin{definition}\cite{simmons1963introduction}
		A connected set is a set which cannot be represented as the union of two disjoint non-empty closed sets.
	\end{definition}
	
	\begin{lemma}\label{lemma_bnd}
		Given a local maximizer $\vec x^* \in \rit^n$ for \eqref{nlpn}, if $F$ is connected then the following statement is true:\\
		If $\vec x^*$ is a global maximizer on $\partial F$ then it is also a global maximizer for \eqref{nlpn}. 
	\end{lemma}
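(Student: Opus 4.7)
I would argue by contradiction, assuming there exists $\vec y \in F$ with $f(\vec y) > f(\vec x^*)$, and aim to produce either a feasible point near $\vec x^*$ with strictly larger objective value (contradicting local maximality) or a boundary point with strictly larger objective value (contradicting the hypothesis that $\vec x^*$ is a global maximizer on $\partial F$).

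The first step is to exploit concavity of $f$ along the segment $\vec z_t = t\vec y + (1-t)\vec x^*$, $t \in [0,1]$. Concavity gives $f(\vec z_t) \geq t f(\vec y) + (1-t) f(\vec x^*) > f(\vec x^*)$ for every $t \in (0,1]$, so every point of the open segment strictly beats $\vec x^*$ in objective value.

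The second step is a case split on whether the segment stays inside $F$ in a right-neighborhood of $0$. If $\vec z_t \in F$ for all sufficiently small $t > 0$, then any such $\vec z_t$ lies in $F \cap N(\vec x^*)$ while satisfying $f(\vec z_t) > f(\vec x^*)$, contradicting local maximality of $\vec x^*$. Otherwise, the set $T = \{t \in [0,1] : \vec z_t \notin F\}$ is nonempty; it is also open in $[0,1]$, since $F^c$ is open (by continuity of the constraints $g_i$) and $t \mapsto \vec z_t$ is continuous. I would then set $t^* = \sup T$ and distinguish $t^* < 1$ (in which case openness of $T$ forces $t^* \notin T$, so $\vec z_{t^*} \in F$, yet $\vec z_{t^*}$ is approached by points $\vec z_{t_k}$ with $t_k \in T$, placing it on $\partial F$) from $t^* = 1$ (in which case $\vec y$ itself is approached by such points, placing $\vec y$ on $\partial F$). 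Either way the resulting boundary point has $f$-value strictly above $f(\vec x^*)$, contradicting global maximality of $\vec x^*$ on $\partial F$.

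The main obstacle will be the topological bookkeeping at $t^*$: one needs to check that the supremum is a genuine boundary point of $F$ rather than an exterior limit, which amounts to invoking closedness of $F$ (again from continuity of the $g_i$) and openness of $T$. Connectedness of $F$ does not appear strictly necessary for this concavity-based segment argument; its conceptual role seems to be to guarantee that any feasible $\vec y$ is reachable from $\vec x^*$ through $F$, a role taken over here by the straight segment in $\rit^n$ combined with the boundary-crossing analysis.
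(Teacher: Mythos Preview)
Your argument is correct and takes a genuinely different route from the paper's. The paper works with the superlevel set $L_c(f)=\{\vec x: f(\vec x)\ge c\}$ for some $c$ with $f(\vec x^*)<c<f(\hat{\vec x})$, observes that $\partial F$ and $\partial L_c(f)$ cannot meet (since $f\le f(\vec x^*)<c$ on $\partial F$), and then uses connectedness of $F$ to reduce to three containment relations between $F$ and $L_c(f)$, each yielding a contradiction. Your segment argument is more elementary: it bypasses the global topology entirely and traces the straight line from $\vec x^*$ to the putative better point $\vec y$, extracting either a nearby feasible improvement (contradicting local maximality) or a boundary point with strictly larger objective (contradicting the boundary hypothesis). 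As you note, your proof never invokes connectedness of $F$, and indeed the statement holds without that assumption; this is a genuine strengthening. The paper's level-set approach, by contrast, makes the geometric picture of nested sublevel and feasible regions more explicit, which aligns with how the lemma is deployed later in the two-dimensional analysis.
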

	\begin{proof}
		$\vec x^*$ is a local maximizer, so there is a neighborhood $N(\vec x^*)$ such that if $f(\vec x) > f(\vec x^*)$ and $\vec x \in N(\vec x^*)$, then $\vec x \notin F$.
		
		Let us prove the lemma by contradiction. Consider an arbitrary point $\hat{\vec x} \in F$ such that $f(\hat{\vec x}) > f(\vec x^*)$. Since $f$ is concave, there exists a convex set $L_c(f) = \{\vec x ~|~ f(\vec x) \geq c\}$, where $c$ satisfies $f(\vec x^*) < c < f(\hat{\vec x})$. Since $f$ is continuous, $c$ can be chosen so that $\partial L_c(f) \cap N(\vec x^*)$ is non-empty.
		Note that $\hat{\vec x} \in L_c(f)$.
		
		Since $f(\vec x) \leq f(\vec x^*) ~\forall \vec x \in \partial F$ and $f(\vec x) > f(\vec x^*) ~\forall \vec x \in \partial L_c(f)$, the two boundaries cannot have common points: $\partial L_c(f) \cap \partial F = \emptyset$. Given that $F$ is connected, there are three possibilities: \\
		
		1) If $F \cap L_c(f)  = \emptyset$. Contradiction, since $\hat{\vec x} \in L_c(f)$ would imply that $\hat{\vec x} \notin F$.
		
		2) If $F \subset L_c(f)$. Contradiction, since $\vec x^* \in F$ and $\vec x^* \notin L_c(f)$ given that $f(\vec x^*) < c$.
		
		3) If $L_c(f) \subset F$. Given that $\partial L_c(f) \cap N(\vec x^*)$ is non-empty, points in this intersection have a higher objective function value with respect to $\vec x^*$ and belong to its neighborhood are feasible. This contradicts with $\vec x^*$ being a local maximizer.
		
		We have proven that $\hat{\vec x} \notin F$ for any $\hat{\vec x}$ such that $f(\hat{\vec x}) > f(\vec x^*)$. Thus $\vec x^*$ is a global maximizer in $F$.
	\end{proof}
	\qed

	
%
	
	
	
	
	\subsection{Problems with two degrees of freedom}
	To the best of our knowledge, there are no polynomial-time verifiable necessary and sufficient conditions for checking KT-invexity even in two dimensions.
	In this work, we try to take a first step in this direction, showing that boundary-invexity is both necessary and sufficient while being efficiently verifiable.
	Even after restricting the problem to two degrees of freedom, the proof of sufficiency is not straightforward and requires an elaborate geometric reasoning. 
	In the following sections, we try to brake up our approach into various pieces, in the hope of making it easier for the reader.
	
	We consider the following optimization problem:
	\begin{align*}\label{nlp0}
	\text{max } & f^0(\vec x) \\
	\text{ s.t. } & g^0_{i}(\vec x) \leq 0 ~\forall i = 1..m \tag{NLP$_0$} \\
	& h^0_i(\vec x) = 0 ~\forall i = 1..n-2\\
	&\vec x \in \rit^{n}.
	\end{align*}
	
	and assume that $n-2$ variables can be projected out given the system of non-redundant $n-2$ linear equations $h^0_i(\vec x) = 0$. After projecting these variables out, (\ref{nlp0}) can be expressed as a two-dimensional problem:
	
	\begin{align*}\label{nlp}
	\text{max } &f(x_1,x_2) \\
	\text{ s.t. } &g_{i}(x_1,x_2) \leq 0 \text{ } \forall i = 1..m \tag{NLP$_2$} \\
	&(x_1,x_2) \in \rit^{2}.
	\end{align*}

	\begin{definition}\cite{krantz2002primer}
		A real function $f$ is said to be real analytic at $\vec x^0$ if it may be represented by a convergent power series on some interval of positive radius centered at $\vec x^0$:
		
		$$f(\vec x) = \sum\limits_{j=0}^{\infty}a_j(\vec x - \vec x^0)^j$$
		
		The function is said to be real analytic on a set $S \subset \rit^n$ if it is real analytic at each $\vec x^0 \in S$.
	\end{definition}
	
	We will assume that $f$ is a concave real analytic function, $g_i$ are twice continuously differentiable, $F$ is connected and bounded and LICQ holds for all points $\vec x \in \partial F$.

	Given these assumptions, the corresponding boundary-invexity models (\ref{nlpin}) become:
	
	\begin{align*}\label{nlpi}
	& \min ~f(x_1,x_2) \tag{NLP$_{2i}$} \\
	& s.t. ~g_i(x_1,x_2) \geq 0.
	\end{align*}
	
	We will define a stronger version of the boundary-invexity property, which is both necessary and sufficient for KT-invexity of (\ref{nlp}):

	\begin{definition} (Boundary-invexity)\label{def:bi}
		Problem (\ref{nlp}) is boundary-invex if at least one of the following holds for all KKT points $\vec x^*$ of (\ref{nlpi}):
		
		\begin{enumerate}
			\item $\vec x^*$ is infeasible for (\ref{nlp}),
			\item $\vec x^*$ has non-negative KKT multipliers in (\ref{nlpin}),
			\item $\vec x^*$ is a local maximum with respect to (\ref{nlp}). 
		\end{enumerate}
	\end{definition}


	
	\subsection{Local optimality of KKT points}
	
	We first recall a result from \cite{wright1999numerical}. Let $A(\vec x)$ be the set of all active constraints at point $\vec x$.
	
	\begin{definition}\label{critical_cone}
		Given a KKT point $\vec x^*$ of problem (\ref{nlp}) and corresponding Lagrange multiplier vector $\vec{\mu}$, a critical cone $C(\vec x^*,\vec \mu)$ is defined as a set of vectors $\vec w$ such that:
		
		$$\begin{cases}
		(\nabla g_i(\vec x^*))^T\cdot\vec w=0 ~\forall i ~|~ g_i(\vec x^*)=0, ~\forall i \in A(\vec x^*) \text{ with } \vec \mu_i>0,\\
		(\nabla g_i(\vec x^*))^T\cdot\vec w\leq 0 ~\forall i ~|~ g_i(\vec x^*)=0, ~\forall i \in A(\vec x^*) \text{ with } \vec \mu_i=0.
		\end{cases}$$
	\end{definition}
	
	The directions contained in the critical cone are important for distinguishing between a local maximum and other types of stationary points.
	
	
	
	%
	
	\begin{theorem}\label{theorem_sosc}\cite{wright1999numerical}(Second-order sufficient conditions)
		Let $\vec x^*$ be a KKT point for problem (\ref{nlp}) with a Lagrange multiplier vector $\vec \mu$. Suppose that
		$$\vec w^T\nabla^2_{\vec x}L(\vec x^*,\vec \mu)\vec w > 0 ~\forall \vec w \in C(\vec x^*,\vec \mu), ~\vec w \neq 0,$$
		where $L(\vec x, \vec \mu) = f(\vec x) - \sum\limits_{i=1}^m\mu_ig_i(\vec x)$ is the Lagrangian function.
		
		Then $\vec x^*$ is a strict local maximum in (\ref{nlp}).
	\end{theorem}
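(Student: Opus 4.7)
My plan is a proof by contradiction built on a second-order Taylor expansion of the Lagrangian. Suppose, for contradiction, that $\vec x^*$ is not a strict local maximum; then there is a sequence of feasible points $\{\vec x^k\}$ converging to $\vec x^*$ with $\vec x^k \neq \vec x^*$ and $f(\vec x^k) \geq f(\vec x^*)$. Define the unit directions $\vec w^k = (\vec x^k - \vec x^*)/\|\vec x^k - \vec x^*\|$ and, by compactness of the unit sphere, pass to a subsequence converging to some $\vec w$ with $\|\vec w\| = 1$.

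The first task is to show that $\vec w$ lies in the critical cone $C(\vec x^*, \vec\mu)$. For each $i \in A(\vec x^*)$, feasibility gives $g_i(\vec x^k) \leq 0 = g_i(\vec x^*)$; first-order Taylor expanding $g_i$ at $\vec x^*$ along $\vec w^k$ and passing to the limit yield $\nabla g_i(\vec x^*)^T \vec w \leq 0$, which establishes the inequality clause of Definition \ref{critical_cone}. Similarly, $f(\vec x^k) \geq f(\vec x^*)$ gives $\nabla f(\vec x^*)^T \vec w \geq 0$. Substituting the KKT stationarity condition into this last inequality yields $\sum_{i \in A(\vec x^*)} \mu_i \nabla g_i(\vec x^*)^T \vec w \geq 0$; since every summand is non-positive (because $\mu_i \geq 0$ and $\nabla g_i(\vec x^*)^T \vec w \leq 0$), every summand must vanish, which forces $\nabla g_i(\vec x^*)^T \vec w = 0$ whenever $\mu_i > 0$. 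Hence $\vec w \in C(\vec x^*, \vec\mu)$.

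The second task is to derive the contradiction. Complementary slackness gives $L(\vec x^*, \vec\mu) = f(\vec x^*)$, and feasibility of $\vec x^k$ together with $\mu_i \geq 0$ yields $L(\vec x^k, \vec\mu) = f(\vec x^k) - \sum_i \mu_i g_i(\vec x^k) \geq f(\vec x^k) \geq f(\vec x^*)$. A second-order Taylor expansion of $L(\cdot,\vec\mu)$ about $\vec x^*$, using $\nabla_{\vec x} L(\vec x^*, \vec\mu) = 0$, then reads
\[
L(\vec x^k, \vec\mu) - L(\vec x^*, \vec\mu) = \tfrac{1}{2}(\vec x^k - \vec x^*)^T \nabla^2_{\vec x} L(\vec x^*, \vec\mu)(\vec x^k - \vec x^*) + o(\|\vec x^k - \vec x^*\|^2) \geq 0.
\]
Dividing by $\|\vec x^k - \vec x^*\|^2$ and sending $k \to \infty$ extracts the sign of $\vec w^T \nabla^2_{\vec x} L(\vec x^*, \vec\mu)\vec w$ on a nonzero element of the critical cone, which (modulo the sign convention of the Lagrangian for maximization) is incompatible with the second-order hypothesis.

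The main obstacle I anticipate is the critical-cone membership step: feasibility alone only delivers the inequality clause of Definition \ref{critical_cone}, and the equality clause for indices with $\mu_i > 0$ is subtler because it requires combining the objective-direction inequality $\nabla f(\vec x^*)^T \vec w \geq 0$ with KKT stationarity and the signs of the multipliers in a way that leaves no slack between the non-positive summands. Once $\vec w \in C(\vec x^*, \vec\mu)$ is secured, the remaining Taylor expansion argument is essentially mechanical.
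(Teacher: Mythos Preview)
The paper does not prove this theorem at all: it is quoted verbatim from \cite{wright1999numerical} and invoked later in Lemma~\ref{lemma_kkt_locmax} without an accompanying argument. Your proposal reproduces the standard textbook proof (essentially the one in Nocedal--Wright), and the two tasks you identify---showing the limiting direction lies in the critical cone via stationarity plus sign analysis, then extracting a second-order inequality from the Taylor expansion of $L$---are carried out correctly.

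One remark on the sign issue you already flag: as stated in the paper the hypothesis $\vec w^T\nabla^2_{\vec x}L(\vec x^*,\vec\mu)\vec w>0$ is the minimization condition transplanted from the reference; for the maximization problem (\ref{nlp}) with $L=f-\sum\mu_i g_i$ the correct hypothesis should have the opposite sign. Your argument yields $\vec w^T\nabla^2_{\vec x}L(\vec x^*,\vec\mu)\vec w\geq 0$ in the limit, which contradicts the (corrected) hypothesis $<0$ but not the literal $>0$ printed in the paper. This is a typo in the paper's transcription, not a defect in your proof; the way the theorem is actually used in Lemma~\ref{lemma_kkt_locmax} (only in the degenerate case $C(\vec x^*,\vec\mu)=\{0\}$, where the hypothesis is vacuous) is insensitive to the sign anyway.
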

	
	\begin{lemma}\label{lemma_kkt_locmax} Suppose that (\ref{nlp}) is boundary-invex. Then every KKT point is a local maximum.
	\end{lemma}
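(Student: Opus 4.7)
My plan is to argue by cases according to which constraints have strictly positive Lagrange multipliers at a KKT point $\vec x^*$ of (\ref{nlp}). First, if every multiplier vanishes, then the stationarity condition reduces to $\nabla f(\vec x^*)=\vec 0$, and concavity of $f$ makes $\vec x^*$ a global maximizer of $f$ on $\rit^2$, hence a local maximizer of (\ref{nlp}). This dispatches any KKT point in the interior of $F$ as well as boundary KKT points whose multipliers all happen to be zero.

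Next, suppose exactly one multiplier $\mu_i$ is strictly positive (possibly with other active constraints carrying zero multipliers). Then $g_i(\vec x^*)=0$ and $\nabla f(\vec x^*) = \mu_i \nabla g_i(\vec x^*)$, which exhibits $\vec x^*$ as a KKT point of the boundary model (\ref{nlpi}). Since $\vec x^*$ is feasible for (\ref{nlp}), condition (1) of Definition \ref{def:bi} is ruled out; the sign of $\mu_i$, carried through the convention that relates multipliers in (\ref{nlp}) to multipliers in (\ref{nlpi}), rules out condition (2). Boundary-invexity then forces condition (3), giving local maximality. The extra zero-multiplier active constraints can only shrink the feasible set near $\vec x^*$, so local maximality of $\vec x^*$ for (\ref{nlp}) is preserved.

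Finally, suppose two distinct multipliers $\mu_i,\mu_j$ are strictly positive. The LICQ assumption in $\rit^2$ forces $\nabla g_i(\vec x^*)$ and $\nabla g_j(\vec x^*)$ to be linearly independent and thus to span $\rit^2$. Inspecting Definition \ref{critical_cone}, any $\vec w$ in the critical cone $C(\vec x^*,\vec\mu)$ must satisfy $\nabla g_i(\vec x^*)^{T}\vec w = 0$ and $\nabla g_j(\vec x^*)^{T}\vec w = 0$, so $C(\vec x^*,\vec\mu)=\{\vec 0\}$. The hypothesis of Theorem \ref{theorem_sosc} is then vacuously true and the theorem delivers a strict local maximum without any work on the Hessian of the Lagrangian.

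The main obstacle is Case 2: one must carefully reconcile the sign convention used for the multiplier in (\ref{nlp}) with the convention in (\ref{nlpi}) so that the ``non-negative multipliers'' clause of Definition \ref{def:bi} lands on the correct side of the disjunction — this is the single place where the strong form of boundary-invexity actually does work, and handling zero-multiplier extra active constraints cleanly requires a short bookkeeping argument. Case 3 is genuinely two-dimensional: the trivialization of the critical cone at a vertex of two active constraints does not persist beyond $\rit^2$.
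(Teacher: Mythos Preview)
Your three-way split on the number of strictly positive multipliers is exactly the paper's decomposition, and your handling of the all-zero case and of the two-positive case (the critical cone collapses to $\{\vec 0\}$ by LICQ in $\rit^2$, so Theorem~\ref{theorem_sosc} applies vacuously) matches the paper.

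The gap is in your middle case. The boundary model (\ref{nlpi}) --- and with it Definition~\ref{def:bi} --- is introduced only for \emph{non-convex} constraints $g_i$ (see the sentence preceding (\ref{nlpin})). If the unique constraint carrying $\mu_i>0$ happens to be convex, there is no auxiliary problem (\ref{nlpi}) attached to it and boundary-invexity says nothing about $\vec x^*$; your appeal to Definition~\ref{def:bi} is then empty. The paper accordingly splits this case in two: when $g_i$ is convex, $\vec x^*$ is a KKT point of the convex program $\max f$ subject to $g_i(\vec x)\le 0$, hence a global maximizer of that relaxation of (\ref{nlp}), hence a local maximizer of (\ref{nlp}) itself. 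Only in the non-convex subcase does the paper set $\lambda_i=-\mu_i<0$, observe that conditions (1) and (2) of Definition~\ref{def:bi} fail at $\vec x^*$, and extract condition (3). Your sign-convention discussion is correct for that non-convex subcase, but you still need to supply the short convex-subcase argument to close the proof.
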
	
	\begin{proof}
		Consider a KKT point $\vec x^*$.		
		Since (\ref{nlp}) is two-dimensional, at most two constraints can be active and non-redundant at $\vec x^*$. Let these constraints be denoted as $g_1$ and $g_2$ and let the corresponding KKT multipliers be $\mu_1$, $\mu_2$.
		
		\begin{enumerate}
			\item If both $\mu_i > 0$, then the critical cone can be written as:
			
			$$\vec w \in C(\vec x^*,\vec \mu) ~\Leftrightarrow~
			\begin{cases}
			(\nabla g_1(\vec x^*))^T\cdot\vec w = 0\\
			(\nabla g_2(\vec x^*))^T\cdot\vec w = 0
			\end{cases}
			$$
			
			$$~\Rightarrow~ 
			\begin{sqcases}
			\vec w = 0\\
			(\nabla g_1(\vec x^*))^T\cdot\vec w = (\nabla g_2(\vec x^*))^T\cdot\vec w 
			\end{sqcases}
			~\Rightarrow~
			\begin{sqcases}
			\vec w = 0\\
			\nabla g_1(\vec x^*) = \nabla g_2(\vec x^*)
			\end{sqcases}$$
			
			In the first case, $\vec w \in C(\vec x^*,\vec \mu) ~\Leftrightarrow~ \vec w = 0$. The conditions of Theorem \ref{theorem_sosc} are satisfied and $\vec x^*$ is a local maximum. Otherwise LICQ is violated.
			
			\item Suppose that $\mu_2 = 0$ and $\mu_1 > 0$. Then, by (\ref{kkt_conditions1}), $\nabla f(\vec x^*) = \mu_1\nabla g_1(\vec x^*)$.  Then the following cases are possible:
			
			\begin{enumerate}
				
				\item $g_1$ is convex. Since (\ref{kkt_conditions2}) and (\ref{kkt_conditions3}) are satisfied, $\vec x^*$ is a KKT point for a problem of maximizing $f$ on $g_1(\vec x) = 0$. Then it is a local maximum for this problem and, since it is a relaxation of (\ref{nlp}), a local maximum for (\ref{nlp}).
				
				\item $g_1$ is non-convex. Setting $\lambda_1 = -\mu_1$, we get $\nabla f(\vec x^*) = -\lambda_1\nabla g_1(\vec x^*)$, $\lambda_1 < 0$. (\ref{kkt_conditions3}) implies that $g_1(\vec x^*) = 0$. Then $\vec x^*$ is a KKT point for (\ref{nlpi}) with a negative KKT multiplier which is feasible for (\ref{nlp}). Since (\ref{nlp}) is boundary-invex, $\vec x^*$ is a local maximum.
				
				
%
			\end{enumerate}
			
			\item $\mu_1 = \mu_2 = 0$. Then $\vec x^*$ is the unconstrained global maximum of $f$ and thus a maximum for (\ref{nlp}).
			
		\end{enumerate}
	\end{proof}
	\qed
	
	\section{Two-dimensional cross product}\label{section_crossprod}
	
	\begin{definition} Given two vectors $\vec x, \vec y \in \rit^2$ define their cross product to be 
	$$\vec x \times \vec y = x_1y_2 - x_2y_1.$$
	\end{definition}
	
	The sign of $\vec x \times \vec y$ has a geometric interpretation. If $\vec x \times \vec y > 0$, then the shortest angle at which $\vec x$ has to be rotated for it to become co-directional with $\vec y$ corresponds to a counter-clockwise rotation. If $\vec x \times \vec y < 0$, then such an angle corresponds to a clockwise rotation. If $\vec x \times \vec y = 0$, the vectors are parallel.

	\begin{definition}[Tangent vector]\cite{abbena2006modern}
		Given a parametrization $(x_1(t),x_2(t))$ of a curve $g(x_1,x_2) = 0$, the vector $(x_1'(t),x_2'(t))^T$ is said to be its tangent vector.
	\end{definition}
	
	Tangent vectors are orthogonal to gradient vectors. This can be proven using the chain differentiation rule:
	
	$$g(x_1(t),x_2(t)) = 0 ~\Rightarrow ~\frac{\partial g}{\partial x_1}\frac{\partial x_1}{\partial t} + \frac{\partial g}{\partial x_2}\frac{\partial x_2}{\partial t} = (\nabla g(x_1,x_2))^T\cdot(x_1'(t),x_2'(t))^T = 0.$$
	
	\begin{lemma}
		Given a differentiable function $g: \rit^2 \rightarrow \rit$, a point $\vec y = (y_1,y_2)$ such that $g(\vec y) = 0$, the vector $(-g'_{x_2}(\vec y),g'_{x_1}(\vec y))$ is the tangent vector to the curve $g(\vec x) = 0$ at point $\vec y$.
	\end{lemma}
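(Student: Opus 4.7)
The plan is to exhibit an explicit local parametrization of the level curve $g(\vec x)=0$ through $\vec y$ whose tangent vector, after a harmless rescaling, equals $(-g'_{x_2}(\vec y),g'_{x_1}(\vec y))$. Since the definition just above the lemma assigns a tangent vector to each parametrization, it suffices to produce one parametrization realizing the desired vector.

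First I would dispose of the degenerate case $\nabla g(\vec y)=0$: here the proposed vector is itself zero, and the statement holds trivially (any constant parametrization works). Otherwise, at least one of $g'_{x_1}(\vec y),g'_{x_2}(\vec y)$ is nonzero; suppose first $g'_{x_2}(\vec y)\neq 0$. Then the implicit function theorem provides a $C^1$ function $\phi$ in a neighborhood of $y_1$ with $\phi(y_1)=y_2$ and $g(x_1,\phi(x_1))=0$ locally, giving the parametrization $(x_1(t),x_2(t))=(t,\phi(t))$ with $x_1(y_1)=y_1$, $x_2(y_1)=y_2$. Differentiating the identity $g(t,\phi(t))=0$ at $t=y_1$ via the chain rule yields $g'_{x_1}(\vec y)+g'_{x_2}(\vec y)\,\phi'(y_1)=0$, hence $\phi'(y_1)=-g'_{x_1}(\vec y)/g'_{x_2}(\vec y)$. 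The tangent vector at $\vec y$ is therefore
$$(1,\phi'(y_1))=\bigl(1,\,-g'_{x_1}(\vec y)/g'_{x_2}(\vec y)\bigr).$$
Reparametrizing by $t\mapsto -t/g'_{x_2}(\vec y)$ (a legitimate change of parameter because $g'_{x_2}(\vec y)\neq 0$) multiplies the tangent vector by $-g'_{x_2}(\vec y)$, producing exactly $(-g'_{x_2}(\vec y),g'_{x_1}(\vec y))$, as required.

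If instead $g'_{x_2}(\vec y)=0$, then $g'_{x_1}(\vec y)\neq 0$, and the symmetric application of the implicit function theorem furnishes $\psi$ with $g(\psi(s),s)=0$ near $\vec y$. The same computation yields $\psi'(y_2)=-g'_{x_2}(\vec y)/g'_{x_1}(\vec y)=0$, and an analogous rescaling returns the tangent vector $(-g'_{x_2}(\vec y),g'_{x_1}(\vec y))=(0,g'_{x_1}(\vec y))$.

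There is no real obstacle; the only care needed is the case split based on which partial derivative is nonzero so that the implicit function theorem applies, plus a routine chain-rule calculation. As a consistency check, the resulting vector is orthogonal to $\nabla g(\vec y)=(g'_{x_1}(\vec y),g'_{x_2}(\vec y))$, in agreement with the orthogonality observation stated immediately before the lemma.
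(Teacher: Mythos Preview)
Your argument is correct, but it takes a genuinely different route from the paper. The paper's proof is a two-line computation: it simply verifies that the dot product of $(-g'_{x_2}(\vec y),g'_{x_1}(\vec y))$ with $\nabla g(\vec y)$ vanishes, and then declares the vector to be tangent because it is orthogonal to the gradient. You instead invoke the implicit function theorem to produce an explicit local parametrization of the level curve and compute its velocity vector directly, obtaining the desired vector after a linear reparametrization. Your approach is longer but arguably more faithful to the paper's own definition of tangent vector, which is stated in terms of a parametrization $(x_1(t),x_2(t))$; the paper's proof tacitly uses the converse of the orthogonality observation (that in $\rit^2$ any nonzero vector orthogonal to $\nabla g$ arises from some parametrization), which it does not state explicitly. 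The trade-off is brevity versus rigor: the paper's argument is immediate once one accepts orthogonality as characterizing tangency, while yours closes that gap at the cost of a case split and the implicit function theorem.
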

	\begin{proof}
	Considering the dot product,
	$$(-g'_{x_2}(\vec y),g'_{x_1}(\vec y))\cdot\nabla g(\vec y) = (-g'_{x_2}(\vec y),g'_{x_1}(\vec y))\cdot(g'_{x_1}(\vec y),g'_{x_2}(\vec y))^T = $$
	$$= -g'_{x_2}(\vec y)g'_{x_1}(\vec y) + g'_{x_1}(\vec y)g'_{x_2}(\vec y) = 0$$
	the vector $(-g'_{x_2}(\vec y),g'_{x_1}(\vec y))$ is orthogonal to the gradient and thus a tangent to the curve $g(\vec x)=0$ at the point $\vec y$. 
	\end{proof}
	\qed
	
	\begin{figure}[h]
		\includegraphics[scale=0.4]{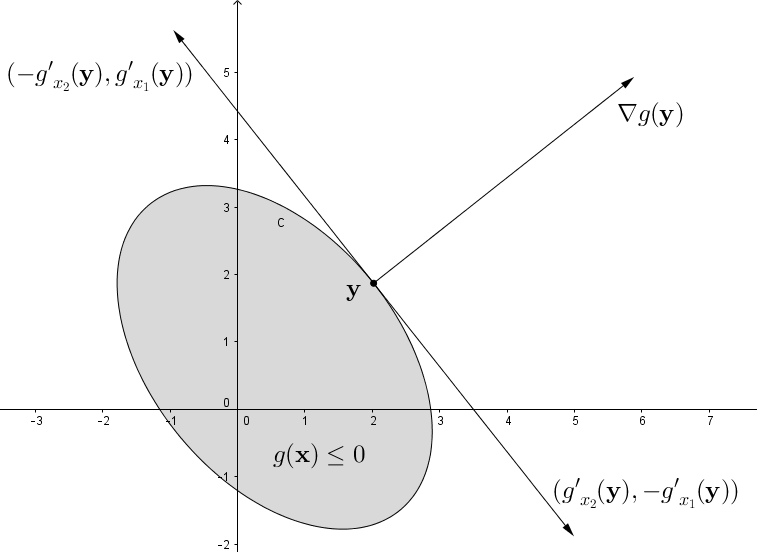}
		\caption{Tangent vectors}
	\end{figure}
	
	\begin{definition}
		The positive (resp. negative) direction of moving along the curve $g(\vec x) = 0$ is the direction corresponding to the vector $(-g_{x_2}(\vec x),g_{x_1}(\vec x))$ (resp. $(g_{x_2}(\vec x),-g_{x_1}(\vec x))$).
	\end{definition}
	
	
	\begin{definition}
		\cite{wrede2010advanced}		
	Given a differentiable function $f$, the directional derivative of $f$ along vector $\vec u$ is defined as:	
	$$\frac{\partial f(\vec x)}{\partial \vec u} = (\nabla f(\vec x))^T\cdot\vec u$$
	\end{definition}	
	\begin{lemma}\label{lemma_cross_prod}
		Consider differentiable functions $f: \rit^2 \rightarrow \rit$ and $g: \rit^2 \rightarrow \rit$. We have $\nabla f(\vec y) \times \nabla g(\vec y) \geq 0$ (resp. $\nabla f(\vec y) \times \nabla g(\vec y) \leq 0$) if and only if $f(\vec x)$ is non-increasing (resp. non-decreasing) when moving along the curve $g(\vec x) = 0$ in the positive direction.
	\end{lemma}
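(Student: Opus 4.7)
The plan is to translate the statement ``$f$ is non-increasing along the curve in the positive direction'' into a statement about the sign of the directional derivative of $f$ along the tangent vector $\vec t(\vec y) = (-g'_{x_2}(\vec y), g'_{x_1}(\vec y))$, and then to observe that this directional derivative is, up to a sign, exactly the cross product $\nabla f(\vec y) \times \nabla g(\vec y)$.

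First I would invoke the implicit function theorem at $\vec y$: since LICQ (or at least $\nabla g(\vec y) \ne 0$) has been assumed throughout this section, the level set $g(\vec x) = 0$ admits a smooth local parametrization $\gamma(t) = (x_1(t), x_2(t))$ with $\gamma(0) = \vec y$ and velocity $\gamma'(0)$ a positive scalar multiple of $\vec t(\vec y)$; this is precisely what ``moving in the positive direction'' means by the definition given just above the lemma. Because a $C^1$ function along a $C^1$ curve is non-increasing near $t = 0$ in the given orientation if and only if $\frac{d}{dt} f(\gamma(t))|_{t=0} \le 0$, the lemma reduces to comparing the sign of this derivative with the sign of the cross product.

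Next I would compute the derivative by the chain rule, which gives
\begin{equation*}
\tfrac{d}{dt} f(\gamma(t))\big|_{t=0} \;=\; \nabla f(\vec y) \cdot \gamma'(0) \;=\; \alpha\,\bigl(\nabla f(\vec y) \cdot \vec t(\vec y)\bigr),
\end{equation*}
for some $\alpha > 0$. Expanding,
\begin{equation*}
\nabla f(\vec y)\cdot \vec t(\vec y) \;=\; f'_{x_1}(\vec y)\bigl(-g'_{x_2}(\vec y)\bigr) + f'_{x_2}(\vec y)\,g'_{x_1}(\vec y) \;=\; -\bigl(\nabla f(\vec y) \times \nabla g(\vec y)\bigr).
\end{equation*}
Hence $\frac{d}{dt}f(\gamma(t))|_{t=0}$ has the opposite sign to $\nabla f(\vec y) \times \nabla g(\vec y)$.

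Combining the two steps, $\nabla f(\vec y) \times \nabla g(\vec y) \ge 0$ is equivalent to $\frac{d}{dt} f(\gamma(t))|_{t=0} \le 0$, which is exactly the statement that $f$ is non-increasing when moving in the positive direction along $g(\vec x) = 0$; the symmetric argument handles the non-decreasing case. The only real subtlety is ensuring the local parametrization exists and is oriented correctly, but this is standard under the running LICQ assumption, so I do not expect any serious obstacle — the proof is essentially a single chain-rule calculation together with the definition of the positive direction.
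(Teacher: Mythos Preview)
Your proposal is correct and follows essentially the same approach as the paper: compute the directional derivative of $f$ along the positive tangent $(-g'_{x_2},g'_{x_1})$ and observe that it equals $-\nabla f(\vec y)\times\nabla g(\vec y)$. The paper's proof is slightly terser, working directly with the directional derivative without explicitly invoking the implicit function theorem or the scalar $\alpha$, but the computation and logic are identical.
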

	\begin{proof}
		We will prove the case where $f$ is non-increasing.
		
		Consider the directional derivative of $f$ with respect to the tangent vector at point $\vec y$:
		
		$$\frac{\partial f}{\partial (-g'_{x_2}(\vec y),g'_{x_1}(\vec y))}(\vec y) = (\nabla f(\vec y))\cdot(-g'_{x_2}(\vec y),g'_{x_1}(\vec y))^T =  $$
		$$-f'_{x_1}(\vec y)g'_{x_2}(\vec y) + f'_{x_2}(\vec y)g'_{x_1}(\vec y) = -\nabla f(\vec y) \times \nabla g(\vec y) \leq 0,$$
		and this implies that the cross product being non-negative at $\vec y$ is equivalent to $f$ being non-increasing on $g(\vec x) = 0$ at $\vec y$.
	\end{proof}
	\qed
	
	\subsection{Reformulation of the KKT conditions}
	
	Now we shall establish a connection between the KKT conditions and the sign of the cross products corresponding to the gradient vectors.
	
	\begin{lemma}\label{lemma:kkt_prod_signs}
		Consider a point $\vec x^* \in F$ with two active non-redundant constraints $g_1(\vec x) \leq 0$ and $g_2(\vec x) \leq 0$ such that $\nabla g_1(\vec x^*) \times \nabla g_2(\vec x^*) > 0$. $\vec x^*$ is a KKT point if and only if 
		
		\begin{align*} & \nabla f(\vec x^*) \times \nabla g_1(\vec x^*) \geq 0, \\
		& \nabla f(\vec x^*) \times \nabla g_2(\vec x^*) \leq 0. \end{align*}
	\end{lemma}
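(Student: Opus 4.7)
The plan is to exploit the hypothesis $\nabla g_1(\vec x^*) \times \nabla g_2(\vec x^*) > 0$, which asserts that the two active constraint gradients are linearly independent and hence form a basis of $\rit^2$. Consequently, there exist unique real scalars $\mu_1, \mu_2$ with $\nabla f(\vec x^*) = \mu_1 \nabla g_1(\vec x^*) + \mu_2 \nabla g_2(\vec x^*)$. Since $\vec x^* \in F$ satisfies primal feasibility, and complementary slackness is automatic for the two active constraints (and for any other constraint by setting its multiplier to zero), the full KKT system collapses to the single question of whether both $\mu_1$ and $\mu_2$ are non-negative.

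To pin down the signs, I would solve for the multipliers in closed form using the two-dimensional cross product from Section \ref{section_crossprod}. Taking the cross product of both sides of the linear relation with $\nabla g_2(\vec x^*)$ kills the second term, because $\nabla g_2 \times \nabla g_2 = 0$, leaving
$$\nabla f(\vec x^*) \times \nabla g_2(\vec x^*) \;=\; \mu_1\,\bigl(\nabla g_1(\vec x^*) \times \nabla g_2(\vec x^*)\bigr).$$
Analogously, crossing with $\nabla g_1(\vec x^*)$ and using the antisymmetry $\nabla g_2 \times \nabla g_1 = -\nabla g_1 \times \nabla g_2$ yields
$$\nabla f(\vec x^*) \times \nabla g_1(\vec x^*) \;=\; -\mu_2\,\bigl(\nabla g_1(\vec x^*) \times \nabla g_2(\vec x^*)\bigr).$$
These two identities are just Cramer's rule recast via the two-dimensional cross product.

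To finish, I would divide through by the strictly positive scalar $\nabla g_1(\vec x^*) \times \nabla g_2(\vec x^*)$ to isolate $\mu_1$ and $\mu_2$. The sign of each multiplier then coincides with the sign of the numerator, with the minus sign in the second identity inducing exactly the sign flip that distinguishes the two inequalities of the lemma. Reading the equivalence in both directions produces the stated conclusion: $\mu_1, \mu_2 \geq 0$ is equivalent to one cross product being non-negative and the other non-positive.

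There is essentially no real obstacle here; the proof is a one-shot linear-algebra computation enabled by the basis property guaranteed by the strict-positivity hypothesis. The only mild subtlety is book-keeping the sign conventions induced by the antisymmetry of the cross product and matching them correctly against the indices $g_1$ and $g_2$, but in two dimensions this is routine.
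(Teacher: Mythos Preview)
Your proposal is correct and takes essentially the same approach as the paper: both solve the $2\times 2$ linear system for $\mu_1,\mu_2$ via Cramer's rule expressed through the two-dimensional cross product, then read off the sign conditions using the strict positivity of $\nabla g_1(\vec x^*)\times\nabla g_2(\vec x^*)$. The only cosmetic difference is that the paper first writes out the stationarity equations component-wise before recognizing the determinants as cross products, whereas you cross both sides of the vector identity directly.
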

	\begin{proof}
		By KKT conditions (\ref{kkt_conditions1})-(\ref{kkt_conditions3}), there exist $\mu_1, \mu_2$ such that the following holds:
		
		\begin{equation*}
		\begin{cases} \label{kkt}
		\mu_{1} \frac{\partial g_1}{\partial x_1}(\vec x^*) + \mu_{2} \frac{\partial g_2}{\partial x_1}(\vec x^*) = \frac{\partial f}{\partial x_1}(\vec x^*) 
		\\[2mm]
		\mu_{1} \frac{\partial g_1}{\partial x_2}(\vec x^*) + \mu_{2} \frac{\partial g_2}{\partial x_2}(\vec x^*) = \frac{\partial f}{\partial x_2}(\vec x^*) 
		\\[2mm]
		\mu_1 , \mu_2 \geq 0
		\end{cases}
		\end{equation*}
		
		From this system we can find $\mu_1 , \mu_2$:
		
		\begin{gather*}
		\mu_1 = \frac{\frac{\partial f}{\partial x_1}(\vec x^*) \frac{\partial g_2}{\partial x_2}(\vec x^*) - \frac{\partial g_2}{\partial x_1}(\vec x^*) \frac{\partial f}{\partial x_2}(\vec x^*)}{\frac{\partial g_1}{\partial x_1}(\vec x^*) \frac{\partial g_2}{\partial x_2}(\vec x^*) - \frac{\partial g_2}{\partial x_1}(\vec x^*) \frac{\partial g_1}{\partial x_2}(\vec x^*)} = 
		\frac{\nabla f(\vec x^*) \times \nabla g_2(\vec x^*)}{\nabla g_1(\vec x^*) \times \nabla g_2(\vec x^*)} \label{mu1_2} \\[3mm]
		\mu_2 = \frac{\frac{\partial g_1}{\partial x_1}(\vec x^*) \frac{\partial f}{\partial x_2}(\vec x^*) - \frac{\partial f}{\partial x_1}(\vec x^*) \frac{\partial g_1}{\partial x_2}(\vec x^*)}{\frac{\partial g_1}{\partial x_1}(\vec x^*) \frac{\partial g_2}{\partial x_2}(\vec x^*) - \frac{\partial g_2}{\partial x_1}(\vec x^*) \frac{\partial g_1}{\partial x_2}(\vec x^*)} = 
		\frac{\nabla g_1(\vec x^*) \times \nabla f(\vec x^*)}{\nabla g_1(\vec x^*) \times \nabla g_2(\vec x^*)} \label{mu2_2}
		\end{gather*}
		
		
		$\mu_1 , \mu_2 \geq 0$ is equivalent to
		\begin{gather*}
		\nabla f(\vec x^*) \times \nabla g_1(\vec x^*) \geq 0 \\
		\nabla f(\vec x^*) \times \nabla g_2(\vec x^*) \leq 0
		\end{gather*}
	\end{proof}
	\qed

	\section{Parametrization of the boundary of $F$}\label{sec:bnd}
	
	
	Given a real variable $t \in [0 , T]$, where $T \in \rit,~ T > 0$, define a parametrization $\gamma: \rit \rightarrow \rit^2$ of $\partial F$ such that $\gamma(0)=\gamma(T)$ and the direction of increase of $t$ corresponds to the positive direction of moving along the boundary. Then
	\begin{gather*}
	\gamma'_-(t) = \left(-\frac{\partial g_{i^-(t)}}{\partial x_2}(\gamma(t)), \frac{\partial g_{i^-(t)}}{\partial x_1}(\gamma(t))\right)^T\\
	\gamma'_+(t) = \left(-\frac{\partial g_{i^+(t)}}{\partial x_2}(\gamma(t)), \frac{\partial g_{i^+(t)}}{\partial x_1}(\gamma(t))\right)^T
	\end{gather*}
	where $i^-(t)$ and $i^+(t)$ are indices of constraints that are active at $\gamma(t)$ and non-redundant in some neighborhood of this point. If there is only one active non-redundant constraint at $\gamma(t)$, then $i^-(t)=i^+(t)=i(t)$ and $\gamma'_-(t)=\gamma'_+(t)=\gamma'(t)$. Otherwise we will require that there exists an $\epsilon_0>0$ such that $i^-(t) = i(t-\epsilon)$ and $i^+(t) = i(t+\epsilon) ~\forall \epsilon \in (0,\epsilon_0)$.
	
	Let $\gamma^r(t)$ be the reversed direction parametrization of $\partial F$:
	
	\begin{gather*}
	\gamma^{r\prime}_-(t) = \left(\frac{\partial g_{i^{r-}(t)}}{\partial x_2}(\gamma(t)), -\frac{\partial g_{i^{r-}(t)}}{\partial x_1}(\gamma(t))\right)^T,\\
	\gamma^{r\prime}_+(t) = \left(\frac{\partial g_{i^{r+}(t)}}{\partial x_2}(\gamma(t)), -\frac{\partial g_{i^{r+}(t)}}{\partial x_1}(\gamma(t))\right)^T,\\
	\end{gather*}
	where $i^{r-}(t)$, $i^{r+}(t)$ are defined in a similar way to the indices in the direct parametrization.
	
	\begin{figure}[h]
		\includegraphics[scale=0.4]{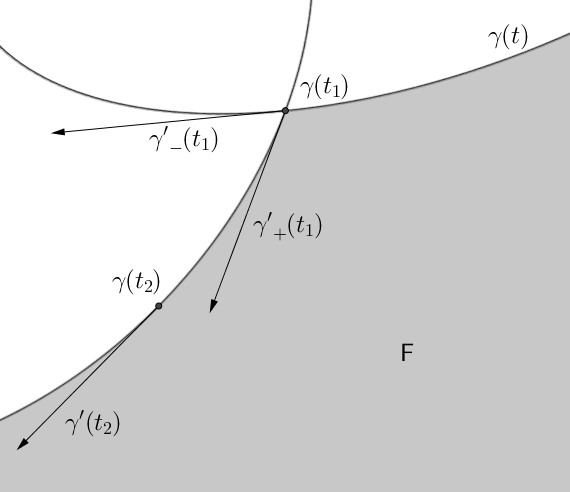}
		\caption{Parametrisation of the boundary of the feasible region}
	\end{figure}
	In the following Lemma, we show that $\gamma$ does not intersect itself.
	\begin{lemma}\label{lemma_no_self_intersect}
	Consider two distinct values $t_1$ and $t_2$ of parameter $t$, such that $0 < t_1 < t_2 < T$, then $\gamma(t_1) \neq \gamma(t_2)$.
	\end{lemma}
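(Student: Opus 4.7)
I would proceed by contradiction. Assume that there exist $0 < t_1 < t_2 < T$ with $\gamma(t_1) = \gamma(t_2) =: \vec p$. The goal is to use the local geometry of $\partial F$ at $\vec p$ together with the fact that the positive-direction trace is locally unique in order to force $\gamma$ to be periodic with period strictly less than $T$, and then to deduce a contradiction with $\gamma$ being a once-around parametrization of $\partial F$.

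The first step is to read off the local structure of $\partial F$ at $\vec p$ from LICQ. Since we work in $\rit^2$, at most two constraints can be active at $\vec p$. If exactly one constraint $g_i$ is active, the implicit function theorem gives a neighborhood $U$ of $\vec p$ on which $\partial F \cap U$ coincides with the smooth embedded arc $\{g_i = 0\} \cap U$. If two constraints $g_i, g_j$ are active, LICQ says $\nabla g_i(\vec p)$ and $\nabla g_j(\vec p)$ are linearly independent, so the zero sets of $g_i$ and $g_j$ are smooth arcs through $\vec p$ that intersect transversally at $\vec p$; locally $\partial F$ consists of one half-arc from each, joined at $\vec p$, which is an isolated corner of $\partial F$.

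The second step is to show that the positive-orientation trace through $\vec p$ is unique. On a smooth piece, $\gamma$ satisfies an ODE of the form $\gamma'(t) = (-g'_{i,x_2}(\gamma(t)), g'_{i,x_1}(\gamma(t)))^T$, which is locally Lipschitz thanks to the assumed twice continuous differentiability of $g_i$; standard uniqueness then forces both $\gamma|_{(t_1-\delta,t_1+\delta)}$ and $\gamma|_{(t_2-\delta,t_2+\delta)}$ to be reparametrizations of the same arc through $\vec p$ with the same orientation. At a corner, the deterministic assignment of $i^-(t)$ and $i^+(t)$ to the two active constraints in the positive direction forces the same incoming and outgoing branches, hence identical one-sided tangents $\gamma'_\pm$ and identical local images for each visit to $\vec p$.

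In the final step I combine these observations. The local coincidence of the two passes yields a small interval on which $\gamma(t_1 + s) = \gamma(t_2 + s)$, so $\gamma$ is periodic with period $\tau := t_2 - t_1 < T$. Iterating periodicity, $\gamma([0,T])$ is covered by $\gamma([0,\tau])$, so $T$ would be an integer multiple $k\tau$ with $k \geq 2$, meaning $\gamma$ retraces $\partial F$ at least twice over $[0,T]$; this contradicts the construction of $\gamma$ as a single positive-direction parametrization of $\partial F$ with $\gamma(0)=\gamma(T)$. The main obstacle I expect is the corner case, where classical ODE uniqueness does not directly apply because $\gamma$ is not differentiable at $\vec p$; the resolution is that LICQ makes corners isolated and the positive-direction convention plus the prescription of $i^\pm$ pin down the pair of branches that $\gamma$ must follow on either side of $\vec p$, so the uniqueness argument on smooth arcs can be patched across the corner.
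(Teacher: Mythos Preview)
Your proposal is correct but follows a genuinely different route from the paper. The paper argues directly at the putative double point $\vec y$: with $j=i(t_1)$ and $k=i(t_2)$ it computes the directional derivative $(\nabla g_j(\vec y))^T\cdot\gamma'(t_2)$ and splits on its sign. If the product vanishes, a short calculation shows $\nabla g_j(\vec y)$ and $\nabla g_k(\vec y)$ are linearly dependent and LICQ fails; if it is nonzero, then $g_j$ changes sign along $\gamma$ at $t_2$, so points of $\gamma\subset\partial F\subset F$ violate $g_j\le 0$, a contradiction. By contrast, you first argue that LICQ forces $\partial F$ to be locally a $1$-manifold with isolated corners, then use Picard--Lindel\"of on the smooth arcs together with the deterministic $i^\pm$ branch assignment at corners to get $\gamma(t_1+s)=\gamma(t_2+s)$, and finally extract a period $\tau=t_2-t_1<T$.

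The paper's argument is shorter and entirely self-contained: no ODE uniqueness, no continuation across corners, and no reliance on what exactly $T$ is. Your argument is more structural and makes the manifold geometry of $\partial F$ explicit, which is pleasant, but the final contradiction leans on $T$ being the \emph{minimal} return time of $\gamma$ (so that $\gamma(0)=\gamma(\tau)$ with $0<\tau<T$ is impossible). The paper's definition of $\gamma$ does not state this outright, so you should make that reading explicit rather than appeal to ``single positive-direction parametrization''. It is also worth noting that the paper's two cases are doing, analytically, the same local-structure work as your implicit-function-theorem step: the nonzero case rules out a transversal crossing of two active-constraint arcs, and the zero case rules out a tangential one via LICQ.
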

	\begin{proof}
	We will proceed by contradiction, suppose that there exist numbers $t_1$, $t_2$ such that $\gamma(t_1) = \gamma(t_2) = \vec y$ and $0 < t_1 < t_2 < T$. Let $j = i(t_1)$ and $k = i(t_2)$. Consider the product $(\nabla g_j(\vec y))^T\cdot \gamma'(t_2)$.
	\begin{enumerate}
	\item $(\nabla g_j(\vec y))^T\cdot\gamma'(t_2) = 0$. Then
	
	$$(g_j)'_{x_1}(\vec y)(g_k)'_{x_2}(\vec y) - (g_j)'_{x_2}(\vec y)(g_k)'_{x_1}(\vec y) $$
	$$= (\nabla g_j(\vec y))^T\cdot((g_k)'_{x_2}(\vec y),-(g_k)'_{x_1}(\vec y))^T= (\nabla g_j(\vec y))^T\cdot\gamma'(t_2) = 0$$
	and thus
	$$(g_j)'_{x_2}(\vec y) = \frac{(g_j)'_{x_1}(\vec y)(g_k)'_{x_2}(\vec y)}{(g_k)'_{x_1}(\vec y)}.$$
	
	If $c = -\frac{(g_j)'_{x_1}(\vec y)}{(g_k)'_{x_1}(\vec y)}$, we have that 
	
	$$c\nabla g_k(\vec y) =  -\frac{(g_j)'_{x_1}(\vec y)} {(g_k)'_{x_1}(\vec y)}\left(\begin{array}{c}
	(g_k)'_{x_1}(\vec y)\\
	[-2mm]\\
	(g_k)'_{x_2}(\vec y)\\
	\end{array}\right) = - \left(\begin{array}{c}
	(g_j)'_{x_1}(\vec y)\\
	[-2mm]\\
	\frac{(g_k)'_{x_2}(\vec y)(g_j)'_{x_1}(\vec y)}{(g_k)'_{x_1}(\vec y)}\\
	\end{array}\right)$$
	$$ = -\left(\begin{array}{c}
	(g_j)'_{x_1}(\vec y)\\
	[-2mm]\\
	(g_j)'_{x_2}(\vec y)\\
	\end{array}\right)$$
	and
	$$\nabla g_j(\vec y) + c\nabla g_k(\vec y) = \nabla g_j(\vec y) - \nabla g_j(\vec y) = 0.$$
	
	This violates LICQ. 
	
	\item $(\nabla g_j(\vec y))^T\cdot\gamma'(t_2) \neq 0$. This product can be interpreted as the directional derivative of $g_j$ with respect to $\gamma'(t_2)$. Note that $g_j(\vec y) = 0$. Since the directional derivative is non-zero and $\gamma'(t_2)$ locally approximates $\gamma(t)$, then $g_j$ changes sign on $\gamma(t)$ at $t_2$. Then we either have $g_j(\gamma(t_2-\epsilon)) < 0$ and $g_j(\gamma(t_2+\epsilon)) > 0$, or $g_j(\gamma(t_2-\epsilon)) > 0$. In both cases there exist infeasible points on $\gamma(t)$. But since $F$ is a closed set, $\partial F \in F$ and all points $\vec x = \gamma(t), ~t \in [0,T]$ are feasible. Contradiction.
	\end{enumerate}
	\end{proof}
	\qed

	\begin{lemma}\label{grad_prod}
		Consider a boundary point $\vec y = \gamma(t^y)$. If there exist two constraints that are active and non-redundant at $\vec y$, then $\nabla g_{i^-(t^y)}(\vec y) \times \nabla g_{i^+(t^y)}(\vec y) > 0$.
	\end{lemma}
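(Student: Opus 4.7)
The plan is to reformulate the cross product $\nabla g_{i^-(t^y)}(\vec y) \times \nabla g_{i^+(t^y)}(\vec y)$ as a directional derivative, and then extract its sign from the feasibility of boundary points lying immediately before $\vec y$ along the parametrization $\gamma$. For brevity let $j = i^-(t^y)$ and $k = i^+(t^y)$.

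First, directly from the expression $\gamma'_-(t^y) = (-(g_j)'_{x_2}(\vec y),\,(g_j)'_{x_1}(\vec y))^T$ recorded in Section \ref{sec:bnd}, a short calculation yields the identity
\[ \nabla g_k(\vec y) \cdot \gamma'_-(t^y) \;=\; (g_j)'_{x_1}(\vec y)\,(g_k)'_{x_2}(\vec y) - (g_j)'_{x_2}(\vec y)\,(g_k)'_{x_1}(\vec y) \;=\; \nabla g_j(\vec y) \times \nabla g_k(\vec y). \]
It therefore suffices to prove that the dot product on the left is strictly positive. Intuitively, this says that moving into $\vec y$ along the $g_j=0$ arc in the positive direction increases $g_k$, which is natural since $g_k$ is non-positive on that arc and vanishes only at $\vec y$.

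To make this rigorous, I would fix a small $\epsilon > 0$ and examine $\gamma(t^y - \epsilon)$. By the definition of $i^-(t^y)$, this point lies on $g_j=0$, and feasibility forces $g_k(\gamma(t^y-\epsilon)) \le 0$. A first-order Taylor expansion, combined with $g_k(\vec y)=0$, gives
\[ g_k(\gamma(t^y-\epsilon)) = -\epsilon\,\nabla g_k(\vec y)\cdot \gamma'_-(t^y) + o(\epsilon) \le 0, \]
from which $\nabla g_k(\vec y)\cdot\gamma'_-(t^y) \ge 0$ follows upon dividing by $\epsilon$ and letting $\epsilon \to 0^+$.

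The main obstacle, and the only non-routine step, is upgrading this weak inequality to a strict one. For this I would invoke LICQ: equality would force $\gamma'_-(t^y)$ to be orthogonal to both $\nabla g_j(\vec y)$ (by construction of the tangent vector) and $\nabla g_k(\vec y)$, so these two gradients would be linearly dependent, contradicting the standing LICQ assumption on $\partial F$. Combined with the identity above, this yields $\nabla g_j(\vec y) \times \nabla g_k(\vec y) > 0$, as desired.
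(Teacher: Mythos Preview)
Your proof is correct and follows essentially the same approach as the paper: rewrite the cross product as a directional derivative of one active constraint along the tangent to the other, obtain the non-strict inequality from feasibility of nearby boundary points, and upgrade to strict inequality via LICQ. The only difference is a symmetric choice---the paper pairs the forward tangent $\gamma'_+(t^y)$ with the backward constraint $g_{i^-}$, whereas you pair the backward tangent $\gamma'_-(t^y)$ with the forward constraint $g_{i^+}$; both routes yield the same conclusion by the same mechanism.
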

	\begin{proof}
		Consider the vector $\gamma_+'(t^y)$, which is the tangent vector to $g_{i^+(t^y)}$ at point $\vec y$. By definition of $i^+$, constraint $g_{i^+(t^y)}$ is active and non-redundant on $\gamma(t)$ in some right neighborhood of $t^y$. Then the tangent is a feasible direction at $\vec y$ with respect to constraint $g_{i^-(t^y)}(\vec x) \leq 0$. This can be written as:
		
		$$(\nabla g_{i^-(t^y)}(\vec y))^T\cdot \gamma_+'(t^y) \leq 0.$$
		Or, equivalently:
		$$\left(\frac{\partial g'_{i^-(t^y)}}{\partial x_1}(\vec y), \frac{\partial g'_{i^-(t^y)}}{\partial x_2}(\vec y)\right)  \cdot \left(   -\frac{\partial g'_{i^+(t^y)}}{\partial x_2}(\vec y), \frac{g'_{i^+(t^y)}}{\partial x_1}(\vec y)   \right)^T \leq 0 ~\Leftrightarrow$$
		$$-\left(\frac{\partial g'_{i^-(t^y)}}{\partial x_1}(\vec y)   \frac{\partial g'_{i^+(t^y)}}{\partial x_2}(\vec y) - \frac{\partial g'_{i^-(t^y)}}{\partial x_2}(\vec y)   \frac{g'_{i^+(t^y)}}{\partial x_1}(\vec y)\right) \leq 0 ~\Leftrightarrow$$
		$$\left(\frac{\partial g'_{i^-(t^y)}}{\partial x_1}(\vec y)   \frac{\partial g'_{i^+(t^y)}}{\partial x_2}(\vec y) - \frac{\partial g'_{i^-(t^y)}}{\partial x_2}(\vec y)   \frac{g'_{i^+(t^y)}}{\partial x_1}(\vec y)\right) \geq 0 ~\Leftrightarrow$$
		$$\nabla g_{i^-(t^y)}(\vec y) \times \nabla g_{i^+(t^y)}(\vec y) \geq 0.$$
		
		If $\nabla g_{i^-(t^y)}(\vec y) \times \nabla g_{i^+(t^y)}(\vec y) = 0$, then LICQ is violated at point $\vec y$: 
		$$\nabla g_{i^-(t^y)}(\vec y) + c\nabla g_{i^+(t^y)}(\vec y) = 0 \text{ if } c = \left(\frac{\partial g'_{i^-(t^y)}}{\partial x_1}(\vec y)\right)  /  \left(\frac{\partial g'_{i^+(t^y)}}{\partial x_1}(\vec y)\right).$$
		Thus only strict inequality is possible:
		$$\nabla g_{i^-(t^y)}(\vec y) \times \nabla g_{i^+(t^y)}(\vec y) > 0.$$
	\end{proof}
	\qed

	
%
	
	
\section{Splitting the space in two}\label{sec:splitting}

\subsection{Behavior of a concave function on a line}\label{section_rays}

First we will prove a general result for one-dimensional real analytic functions.

\begin{lemma}\label{lemma:anfunc}
	Let $f: \rit \rightarrow \rit$ be a real analytic function. If $f$ is constant on some nonempty interval $[a,b]$, then it is identically constant.
\end{lemma}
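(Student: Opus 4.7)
The plan is to use the identity principle for real analytic functions in one variable, which follows from a standard clopen-set argument on the connected space $\rit$.

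First I would observe that if $f \equiv c$ on $[a,b]$, then picking any interior point $x_0 \in (a,b)$, all higher derivatives $f^{(n)}(x_0)$ with $n \geq 1$ vanish, since $f$ is identically $c$ in a full neighborhood of $x_0$. This gives a single point where all derivatives of order at least one are zero, which is the seed of the propagation argument.

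Next I would define
\[
S = \{x \in \rit : f^{(n)}(x) = 0 \text{ for all } n \geq 1\},
\]
and show that $S$ is both open and closed in $\rit$. Closedness is immediate: $S$ is the intersection over $n \geq 1$ of the zero sets of the continuous functions $f^{(n)}$. For openness, I would use real analyticity: if $x_1 \in S$, then on some interval around $x_1$ the function admits the convergent expansion
\[
f(x) = f(x_1) + \sum_{n \geq 1} \frac{f^{(n)}(x_1)}{n!}(x - x_1)^n = f(x_1),
\]
so $f$ is locally constant near $x_1$, and therefore every derivative of order at least one vanishes throughout that interval. Hence a neighborhood of $x_1$ lies in $S$, so $S$ is open.

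Finally, since $S$ is a nonempty clopen subset of the connected space $\rit$ (nonempty by the first step), we conclude $S = \rit$. In particular $f'(x) = 0$ for every $x \in \rit$, so $f$ is identically constant. The only subtlety in the argument is the step establishing openness of $S$, where real analyticity is genuinely needed — a merely smooth function with all derivatives vanishing at a point need not vanish in a neighborhood, as classical flat-bump examples show, so real analyticity cannot be weakened here.
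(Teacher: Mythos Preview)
Your argument is correct and is the standard clopen formulation of the identity principle. The paper's proof uses the same key local step --- vanishing of all derivatives at a point plus analyticity forces the function to be constant on a full neighborhood --- but organizes the propagation differently: it takes the supremum of $b$ for which $f$ is constant on $[a,b]$ and derives a contradiction by pushing past that supremum. Your clopen/connectedness packaging is cleaner and handles both directions at once, whereas the paper's maximal-interval argument, as written, only pushes to the right and tacitly assumes a finite largest $b$ exists; your version avoids these loose ends. Substantively, though, the two proofs are the same idea in different dress.
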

\begin{proof}
	Suppose that $b$ is the largest number such that $f(x)$ is constant for all $x \in [a,b]$. Since $f$ is real analytic at $b$, at each point $\vec y$ the Taylor series $\sum\limits_{i=0}^{\infty}\frac{f^(n)(\vec y)}{n!}(\vec x-\vec y)$ converges to $f(\vec y)$ \cite{krantz2002primer}. $f$ being constant in some left neighborhood of $b$ implies that left-sided derivatives of any order are equal to $0$ at $b$. Then all coefficients of the Taylor series defining $f$ around $b$ are equal to $0$, so there exists $\epsilon>0$ such that $f(x) = 0 ~\forall x \in (b-\epsilon),(b+\epsilon)$. But then $f$ is constant on $(a,b+\epsilon)$, which is impossible as $b+\epsilon > b$.
\end{proof}
\qed

Let $f: \rit^2 \rightarrow \rit$ be a real analytic concave function. Consider a linear function $l(x_1,x_2) = ax_1 + bx_2 + c$. Let $\vec y$ be a point such that $l(\vec y)=0$. We will define two rays:

\begin{definition}
	$r^d(\vec y)$ is the ray lying on the line $l(\vec x) = 0$ starting at $\vec y$ and pointing in the locally decreasing direction of $f$.
\end{definition}

\begin{definition}
	$r^i(\vec y)$ is the ray lying on the line $l(\vec x) = 0$ starting at $\vec y$ and pointing in the locally increasing direction of $f$.
\end{definition}


Let $\vec x^{max}_l$ be a point maximizing $f$ subject to $l(\vec x)=0$.

\begin{figure}[h]
	\includegraphics[scale=0.4]{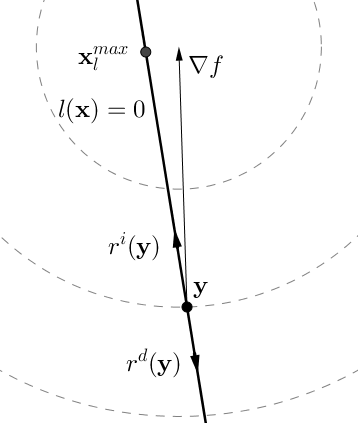}
	\caption{Rays $r^i(\vec y)$ and $r^d(\vec y)$}
\end{figure}

\begin{lemma}\label{lemma_rays1}
	If a concave real analytic function $f(\vec x)$ is not identically constant on $l(\vec x) = 0$ then it is strictly decreasing on $r^d(\vec y)$.
\end{lemma}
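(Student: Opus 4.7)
The plan is to reduce the two-dimensional statement to a one-dimensional question by restricting $f$ along the line, and then combine concavity with real analyticity to upgrade the purely local decrease into a global strict decrease.

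First I would parametrize: let $\vec v$ be a unit vector along $l(\vec x)=0$ oriented so that $r^d(\vec y)=\{\vec y+t\vec v:t\ge 0\}$, and set $\phi(t):=f(\vec y+t\vec v)$. The function $\phi:\rit\to\rit$ inherits concavity from $f$ and real analyticity from the composition of $f$ with an affine map. Because $f$ is not identically constant on the line, $\phi$ is not identically constant, so Lemma~\ref{lemma:anfunc} rules out $\phi$ being constant on any non-degenerate interval. This both makes the locally-decreasing direction meaningful and promotes ``locally non-increasing'' into a strict statement: there is some $\epsilon_0>0$ with $\phi(t)<\phi(0)$ for every $t\in(0,\epsilon_0)$.

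I would then argue by contradiction. Suppose there exist $0\le s<t$ with $\phi(s)\le \phi(t)$. If $s>0$, the monotonicity of secant slopes for concave functions gives
\[
\frac{\phi(s)-\phi(0)}{s}\ge \frac{\phi(t)-\phi(s)}{t-s}\ge 0,
\]
so $\phi(s)\ge\phi(0)$. Applying concavity on the interval $[0,s]$ then yields $\phi(u)\ge (1-u/s)\phi(0)+(u/s)\phi(s)\ge \phi(0)$ for every $u\in[0,s]$; choosing $u\in(0,\min(s,\epsilon_0))$ contradicts the strict local decrease on $(0,\epsilon_0)$. The subcase $s=0$ is identical, with the chord inequality on $[0,t]$ giving $\phi(u)\ge \phi(0)$ on $[0,t]$ and the same contradiction. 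Hence $\phi$ is strictly decreasing on the whole ray, which is what was to be proved.

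The main obstacle I anticipate is the degenerate case $\phi'(0)=0$. Concavity alone does not pin down a unique ``decreasing side'' at $\vec y$, and without analyticity $\phi$ could be flat on a right neighbourhood of $0$, leaving $r^d(\vec y)$ either ill-defined or only non-increasing, which would be too weak to feed the chord argument. Lemma~\ref{lemma:anfunc} is precisely what closes this gap: by forbidding any flat subinterval it forces the strict inequality $\phi(t)<\phi(0)$ on some $(0,\epsilon_0)$, and once that strict local input is available, the concavity-based secant-slope argument propagates strictness to the entire ray without further effort.
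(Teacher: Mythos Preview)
Your proof is correct and follows essentially the same approach as the paper: reduce to the one-variable restriction along the line, use concavity to obtain non-increase, and invoke Lemma~\ref{lemma:anfunc} to rule out flat subintervals and obtain strictness. The only cosmetic differences are that the paper argues directly via the first-order gradient inequality for concave functions (showing $f(\vec x^2)\le f(\vec x^1)$ for any two points on the ray) and applies analyticity at the end, whereas you apply analyticity first to secure a strict local drop and then propagate it by a secant-slope contradiction; both are standard equivalent uses of concavity.
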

\begin{proof}
	Consider two points $\vec x^1, \vec x^2 \in r^d(\vec y)$ such that $||\vec x^2-\vec y|| > ||\vec x^1-\vec y||$. Since $f$ is locally decreasing at $\vec y$ in the direction of $r^d(\vec y)$, $(f'(\vec y))^T\cdot(\vec x^1-\vec y) \leq 0$. By concavity of $f(\vec x)$ we have:
	
	$$f(\vec x^1)-f(\vec y) \leq (f(\vec y))^T\cdot(\vec x^1-\vec y) ~\Rightarrow~ f(\vec x^1)-f(\vec y) \leq 0 ~\Rightarrow~ f(\vec y)-f(\vec x^1) \geq 0.$$
	
	Using the concavity of $f(\vec x)$ again, we get:
	
	$$f(\vec y)-f(\vec x^1) \leq (f(\vec x^1))^T\cdot(\vec y-\vec x^1) ~\Rightarrow~ (f(\vec x^1))^T\cdot(\vec y-\vec x^1) \geq 0$$
	$$\Rightarrow~ (f(\vec x^1))^T\cdot(\vec x^2-\vec x^1) \leq 0.$$
	
	Repeating the same reasoning for $\vec x^1$ and $\vec x^2$ as for $\vec y$ and $\vec x^1$, we can show that $f(\vec x^2) \leq f(\vec x^1)$. 
	
	Since $f(x_1,x_2)$ is real analytic, so is $f(x_1,-\frac{ax_1+c}{b})$, which is the function of one variable $x_1$ and represents the behavior of $f$ on $l(\vec x) = 0$. Since $f(x_1,-\frac{ax_1+c}{b})$ is not identically constant, by Lemma \ref{lemma:anfunc} no interval exists where it is constant. Then strict inequality holds: $f(\vec x^2) < f(\vec x^1)$.
\end{proof}
\qed


	\subsection{Boundary optimality on a half-plane}

	Let $\hat{\vec x} = \gamma(\hat t)$ be a point on the boundary of $F$. In this section we will assume that for the parametrization $\gamma(t)$ defined in Section \ref{sec:bnd}, $f(\gamma(t))$ is non-increasing as a function of $t$ on some interval $[\hat t,\hat t + \epsilon]$, where $\epsilon > 0$. Otherwise, similar results can be proven for the reverse direction parametrization $\gamma^r(t)$.
	
	\begin{definition}
		\cite{mendelson1990introduction} A path in $\rit^n$ is a continuous function mapping every point in the unit interval $[0,1]$ to a point in $\rit^n$:
		$$\rho : [0,1] \rightarrow \rit^n$$
	\end{definition}

	Consider a function $l: \rit^2 \rightarrow \rit$ such that $l(\hat {\vec x}) = 0$. 
	Let $t^1 > \hat t$ be a parameter value corresponding to the point where $\gamma(t)$ first crosses the line $l(\vec x) = 0$ after $\hat t$:
	
	$$t^1=\begin{cases}
	\min \{t > \hat{t} ~|~ l(\gamma(t)) = 0\} \text{ if such } t \text { exist}\\
	\infty \text{ otherwise }
	\end{cases}$$
	
	$t^1$ exists if $F$ is bounded.
	
	Define the optimization problem
	
	\begin{align*}\label{nlpl}
	\text{max } &f(x_1,x_2) \\
	\text{ s.t. } &g_{i}(x_1,x_2) \leq 0 \text{ } \forall i = 1..m \tag{NLP$_l$} \\
	&l(x_1,x_2) \leq 0 \\
	&(x_1,x_2) \in \rit^{2}.
	\end{align*}
	
	\begin{figure}[h]
		\includegraphics[scale=0.3]{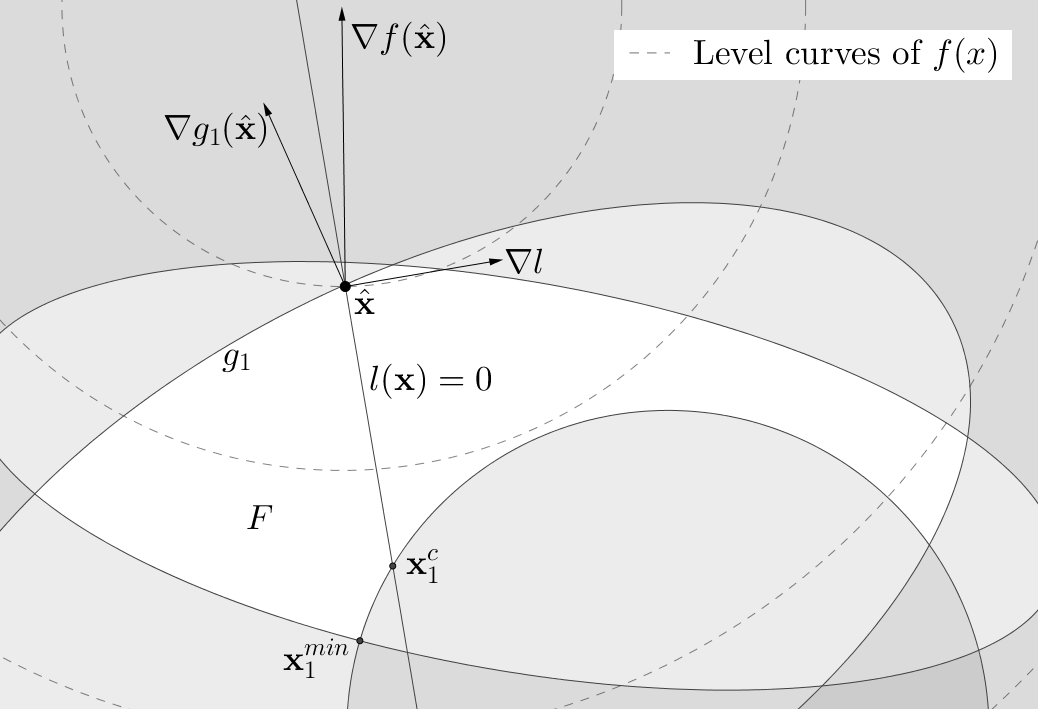}
		\caption{An example problem for Lemma \ref{nextCross}}
	\end{figure}
	
	\begin{lemma}\label{nextCross}
		Given $\gamma(t)$, a parametrization of $\partial F$ in (\ref{nlp}) and given a linear function $l(\vec x)$, if (\ref{nlp}) is boundary-invex and $\hat {\vec x}$ is a KKT point of (\ref{nlpl}), then $f(\gamma(t)) \leq f(\gamma(\hat t)) ~\forall t \in [\hat t, t^1]$.
		
		
		
		
		
		\end{lemma}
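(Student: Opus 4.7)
I would argue by contradiction. Suppose there exists $\tilde t \in (\hat t, t^1]$ with $f(\gamma(\tilde t)) > f(\hat{\vec x})$. After possibly replacing $l$ with $-l$, I may assume WLOG that $l(\gamma(t)) \le 0$ for $t \in [\hat t, t^1]$, so the arc $\gamma([\hat t, t^1])$ lies in $F' := F \cap \{l \le 0\}$, the feasible set of (\ref{nlpl}).

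First, I would use boundary-invexity of (\ref{nlp}) to deduce that $\hat{\vec x}$ is a local maximum of (\ref{nlpl}). Since $l$ is linear, the non-convex active constraints at $\hat{\vec x}$ and their associated subproblems (\ref{nlpi}) for (\ref{nlpl}) coincide with those for (\ref{nlp}); the case analysis of Lemma \ref{lemma_kkt_locmax} then transfers to (\ref{nlpl}) with $l \le 0$ treated as an additional convex constraint, yielding local maximality of $\hat{\vec x}$.

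Second, I form the simple Jordan curve $\mathcal{C} := \gamma([\hat t, t^1]) \cup \overline{\gamma(t^1)\hat{\vec x}}$ (simple by Lemma \ref{lemma_no_self_intersect}), bounding a closed region $R$. Using the orientation of $\gamma$ along $\partial F$ (so $F$ lies consistently to one side of $\gamma'_+$) together with $l \le 0$ on the arc, $R \subset F'$. In particular $\overline{\hat{\vec x}\gamma(t^1)} \subset R$ lies in $F$, so each active constraint $g$ at $\hat{\vec x}$ satisfies $g \le 0$ on the segment. Writing the KKT decomposition $\nabla f(\hat{\vec x}) = \mu \nabla g(\hat{\vec x}) + \lambda \nabla l(\hat{\vec x})$ with $\mu, \lambda \ge 0$ and letting $\vec v$ be the unit direction from $\hat{\vec x}$ toward $\gamma(t^1)$ (orthogonal to $\nabla l$), I deduce $\nabla f(\hat{\vec x}) \cdot \vec v = \mu \nabla g(\hat{\vec x}) \cdot \vec v \le 0$. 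Concavity of $f$ along the line $\{l = 0\}$ then extends this non-positive initial derivative to the whole segment, giving $f \le f(\hat{\vec x})$ on $\overline{\hat{\vec x}\gamma(t^1)}$.

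Finally, by concavity the straight segment $\overline{\hat{\vec x}\gamma(\tilde t)}$ lies in the strict superlevel set $\{f > f(\hat{\vec x})\}$ on its open interior. A tangent-cone argument at $\hat{\vec x}$ (whose tangent cone of $R$ is bounded by $\gamma'_+(\hat t)$ along the arc and by $\vec v$ along the segment) shows that the direction $\gamma(\tilde t) - \hat{\vec x}$ points into $R$; hence $\overline{\hat{\vec x}\gamma(\tilde t)}$ enters $R \subset F'$ near $\hat{\vec x}$, yielding feasible points of (\ref{nlpl}) arbitrarily close to $\hat{\vec x}$ with strictly larger objective, contradicting the local maximality from step one. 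The principal obstacle is this last topological step — verifying both the inclusion $R \subset F'$ and that the segment $\overline{\hat{\vec x}\gamma(\tilde t)}$ enters $R$ at $\hat{\vec x}$ — which requires careful orientation arguments on $\partial F$ via Jordan's theorem and a separate treatment of smooth boundary points versus corner configurations where both $g_{i^-(\hat t)}$ and $g_{i^+(\hat t)}$ are active.
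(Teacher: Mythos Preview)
Your approach differs substantially from the paper's, and the obstacle you flag at the end is not a technicality that orientation arguments will resolve --- it is a genuine failure of the strategy. The inclusion $R \subset F'$ need not hold: nothing prevents the complementary arc $\gamma([t^1,T])$ from crossing the chord $\overline{\hat{\vec x}\,\gamma(t^1)}$ on $\{l=0\}$, and when it does, the Jordan region bounded by $\mathcal C$ contains points on both sides of $\partial F$, hence points outside $F$. The same issue invalidates the intermediate claim that the chord lies in $F$, on which your derivation of $\nabla g(\hat{\vec x})\cdot\vec v \le 0$ depends.

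There is also a structural sign that an ingredient is missing: you invoke boundary-invexity only once, at $\hat{\vec x}$, to obtain local maximality via the analogue of Lemma~\ref{lemma_kkt_locmax}. In effect your argument, if it went through, would prove the conclusion under the weaker hypothesis ``$\hat{\vec x}$ is a local maximum of (\ref{nlpl}).'' The paper's proof, by contrast, consumes boundary-invexity at \emph{every} local minimum of $f\circ\gamma$ along the arc. Concretely, it tracks the first parameter $t^1_{\min}>\hat t$ at which $f(\gamma(t))$ starts to increase, and builds a region $S_1$ bounded not by the chord but by the arc $\gamma([\hat t,t^1_{\min}])$, a piece of the ray $r^d(\hat{\vec x})$ on $\{l=0\}$, and the level curve $\{f = f(\gamma(t^1_{\min}))\}$; Lemma~\ref{lemma_bnd} then gives $f\le f(\hat{\vec x})$ on $S_1$. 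Boundary-invexity is applied at $\gamma(t^1_{\min})$ to rule out a locally concave active constraint there, and this is precisely what forces the continuation $\gamma(t^1_{\min}+\epsilon)$ to stay inside $S_1$. The argument iterates (each exit through the level curve produces a new $t^k_{\min}$ and a new $S_k$) until $\gamma$ finally exits through $\{l=0\}$ at $t^1$. Your single Jordan-region shortcut tries to bypass this iteration, but the non-convexity of $F$ between $\hat t$ and $t^1$ is exactly what the iteration --- and the repeated use of boundary-invexity --- is there to control.
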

		\begin{proof}

		Let $t^1_{min}$ denote the parameter value corresponding to the point where $f(\gamma(t))$ starts increasing as a function of $t$:
		
		$$\begin{cases}
		t^1_{min} > \hat{t} \\
		(\nabla f(\gamma(t)) \times \nabla g_{i(t)}(\gamma(t))) \geq 0 ~\forall t \in (\hat{t}, t^1_{min})\\
		(\nabla f(\gamma(t)) \times \nabla g_{i(t)}(\gamma(t))) < 0 ~\forall t \in (t^1_{min}, t^1_{min} + \epsilon) \text{ for some } \epsilon > 0

		\end{cases}$$
		
		$t^1_{min}$ exists since $F$ is bounded. Let $\vec x_{min}^1 = \gamma(t^1_{min})$.
		
		If $t^1_{min} = t^1$, then for all $\hat t<t\leq t^1$ the inequality $f(\gamma(t)) \leq f(\gamma(\hat t))$ is satisfied and the statement of the lemma holds. Now suppose that $t^1_{min} < t^1$. 
		
		
		Consider the set 
		
		$$L_1 = \left\{\vec x ~\left |~ \begin{cases}
		l(\vec x) \leq 0\\
		f(\vec x) \geq f(\vec x_{min}^1) \end{cases} \right. \right\}$$
		and the curve $\gamma_1(t) = \gamma(t), ~t \in [\hat t,t^1_{min})$. $F$ is connected, $\gamma_1(t)$ is piecewise-continuous, and $\gamma_1(\hat t) = \hat {\vec x}$ lies on the line $l(\vec x) = 0$ and $\gamma_1(t^1_{min})$ lies on the  curve $f(\vec x) = f(\vec x_{min}^1)$, and these are the only points of intersection of the curve and the boundary of $L_1$. Thus $\gamma_1(t)$ is dividing $L_1$ into two connected sets. We will denote the set where all points in the neighborhood of $\gamma_1(t)$ are feasible as $S_1$.
		
		We know that, by definition of $S_1$, all points on its boundary belong to one of the following sets:
		
		\begin{enumerate}
		\item The level curve $f(\vec x) = f(\vec x_{min}^1)$. By definition of $\vec x_{min}^1$ we have that $f(\vec x_{min}^1) \leq f(\hat {\vec x})$.
		
		\item The curve $\gamma_1(t)$. By definition of $\gamma_1(t)$ and $t_{min}^1$, $f(\vec x)\leq f(\hat {\vec x}) ~\forall \vec x \in \gamma_1(t)$.
			
		\item The line $l(\vec x)=0$. Since $\hat {\vec x}$ is a KKT point and thus, by Lemma \ref{lemma_kkt_locmax}, a local maximum, only the direction of local decrease of $f$ on the line is locally feasible. Together with the fact that $\hat {\vec x}$ is the only point where $\gamma_1(t)$ crosses the line, we have that points $\vec x$ in $S_1$ for which $l(\vec x)=0$ lie on the ray $r^d(\hat {\vec x})$ and, by Lemma \ref{lemma_rays1}, satisfy $f(\vec x) \leq f(\hat {\vec x})$.
	    \end{enumerate}
	    
	    Thus $f(\vec x) \leq f(\hat {\vec x}) ~\forall \vec x \in \partial S_1$. By Lemma \ref{lemma_kkt_locmax}, $\hat {\vec x}$ is a local maximum in $S_1$ and thus, by Lemma \ref{lemma_bnd}, $f(\vec x) \leq f(\hat {\vec x}) ~\forall \vec x \in S_1$.

		\textbf{The points following $\gamma(t^1_{min})$ are in $S_1$}
		
		We will say that a path $\rho$ starting at some point $\vec x^s \in \gamma_1(t)$ is $S_1$-feasible if $\vec x \in \rho \implies \vec x \in S_1$.
		
		The definition of $S_1$ implies that for all constraints $g_i$ that are active on $\gamma_1(t)$, $g_i(\vec x) < 0$ for all $\vec x$ on $\rho$ in some neighborhood of $\vec x^s$ excluding $\vec x^s$ itself.
				
		
		
		Consider a neighborhood $N(\vec x^1_{min})$ such that only constraints $g_{i^-(t^1_{min})}$ and $g_{i^+(t^1_{min})}$ are non-redundant in it.
		
		Let $t^- = t^1_{min} - \epsilon$ and $t^+ = t^1_{min} + \epsilon$ for some $\epsilon > 0$ and let:
		
		$$\vec x^- = \gamma(t^-), ~\vec x^+ = \gamma(t^+),$$ 
		$$\phi^- = g_{i^-(t_{min}^1)}, ~\phi^+ = g_{i^+(t_{min}^1)}.$$ 
		
		We will show that there exists an $\epsilon_0$ such that for all $\epsilon < \epsilon_0$ the segment connecting $\gamma(t^-)$ and $\gamma(t^+)$ satisfies the conditions defined for the path $\rho$.
		
		Consider two cases:
		
		\begin{enumerate}
			
			\item One constraint is active at $\vec x_{min}^1$.
			
			Define $\phi = \phi^- = \phi^+$.
			
			In this case $\vec x^1_{min}$ is a local minimum of $f$ on $\phi(\vec x) = 0$. Then $\phi$ is either concave or convex in some neighborhood $N(\vec x^1_{min})$. If $\phi$ is concave in $N(\vec x^1_{min})$, then $\vec x^1_{min}$ violates boundary-invexity of (\ref{nlp}). Indeed, this point is a KKT point for (\ref{nlpi}) with a negative KKT multiplier and not a local maximum for (\ref{nlp}).
			
			Then $\phi$ can only be convex in $N(\vec x^1_{min})$.
			
			Since $\vec x^+$ is feasible and belongs to the neighborhood of $\vec x^1_{min}$, then $\phi(\vec x) \leq 0 ~\forall \vec x \in \overline{\vec x^- \vec x^+}$ and $\phi(\vec x) < 0$ for all $\vec x$ on this segment excluding $\vec x^-$. Hence $\vec x^+ \in S_1$.
			
%
						
			\item Two constraints are active at $\vec x_{min}^1$.
			
			By Lemma \ref{grad_prod}, $\nabla \phi^- (\vec x_{min}^1) \times \nabla \phi^+(\vec x^1_{min}) > 0$. By definitions of the two-dimensional cross product, this is equivalent to:
			
			$$(\nabla \phi^-(\vec x^1_{min}))^T\cdot \gamma'_+(t^1_{min}) < 0$$
			
			This product can be interpreted as the directional derivative of $\phi^-$ with respect to the vector $\gamma'_+$. Observe that $\gamma'_+(t^1_{min})$ shows how $\vec x$ behaves on $\gamma(t)$ when small changes to $t$ are made. Therefore, the above inequality implies that there exists $\epsilon_0$ such that for any $\epsilon < \epsilon_0$ the following holds:
			
			$$(\nabla \phi^-(\vec x^1_{min}))^T\cdot (\vec x^+ - \vec x^1_{min}) < 0$$
			
			Since all constraints are twice continuously differentiable, $\nabla \phi^-(\vec x) (\vec x^+ - \vec x)$ is a differentiable function of $\vec x$. Thus there exists a neighborhood $N(\vec x^1_{min})$ where this function stays negative. We can choose $\epsilon_0$ such that $\vec x^- \in N(\vec x^1_{min}) ~\forall \epsilon < 0$ and:
			
			$$(\nabla \phi^-(\vec x^-))^T\cdot (\vec x^+ - \vec x^-) < 0$$
			
			There exists $\epsilon_0$ such that $\phi^-(\vec x) \leq 0 ~\forall \vec x \in \overline{\vec x^- \vec x^+}$ if $\epsilon < \epsilon_0$. Thus the segment $\overline{\vec x^- \vec x^+}$ is an $S_1$-feasible path.
			

		\end{enumerate}
		
		\textbf{Exiting $S_1$}
		
		By Lemma \ref{lemma_no_self_intersect}, $\gamma(t)$ cannot intersect itself and therefore cannot cross $\gamma_1(t)$. Consequently, there are only two ways of exiting $S_1$:
		
		\begin{enumerate}
			\item Crossing the level curve. Then $f$ is decreasing on $\gamma(t)$ at the intersection point. Let the next point where $f(\gamma(t))$ starts increasing again be denoted as $t^2_{min}$ and define $\gamma_2(t) = \gamma(t), ~t \in [\hat t, t^2_{min}]$. This curve has the same properties as $\gamma_1(t)$:
			
			\begin{enumerate}\item $f(\vec x) \leq f(\hat{\vec x})$ for all $\vec x$ on $\gamma_2(t)$ and
				
			\item $\gamma_2(t)$ only crosses the line $l(\vec x) = 0$ at $\hat {\vec x}$ and the level curve $f(\vec x) = f(\vec x^2_{min})$ at $\vec x^2_{min}$, where $\vec x^2_{min} = \gamma(t^2_{min})$.
			\end{enumerate}
			
			Then $S_2$ can be defined similarly to $S_1$ with the new parameters and the same reasoning can be repeated.
			
			\item Cross $l(\vec x)=0$. Then $f(\gamma(t)) \leq f(\gamma(\hat t)) ~\forall t \in [\hat t, t^1]$.
		\end{enumerate}

	\end{proof}
	\qed
	
	\begin{lemma} \label{lemma_tc_decr}
	Consider a point $\hat {\vec x}$ satisfying the conditions of Lemma \ref{nextCross} with $l(\vec x)$ and $\gamma(t)$. Let $\vec x^1 \in r^i(\hat {\vec x})$ be the next point where $\gamma(t)$ crosses the line after $\hat{\vec x}$, then $\vec x^1$ satisfies the conditions of Lemma \ref{nextCross} for $\gamma(t)$ and $-l(\vec x)$.
	\end{lemma}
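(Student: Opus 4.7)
The lemma asserts that $\vec x^1$ is a valid starting point for re-applying Lemma \ref{nextCross} with $-l$ in place of $l$, so two things must be verified: (i) $\vec x^1$ is a KKT point of the problem with added constraint $-l(\vec x)\le 0$, and (ii) $f(\gamma(t))$ is non-increasing on some right neighborhood of $t^1$. My plan is to first extract the local geometry at $\vec x^1$ from what Lemma \ref{nextCross} already gives, then reduce both (i) and (ii) to a single cross-product sign via Lemma \ref{lemma:kkt_prod_signs}, and finally close the argument by establishing that sign.

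For the geometric setup, I would exploit that the proof of Lemma \ref{nextCross} shows $\gamma$ stays in $\{l\le 0\}$ on $[\hat t, t^1]$ and crosses transversally into $\{l\ge 0\}$ at $t^1$. Transversality combined with LICQ (as in Lemma \ref{grad_prod}) yields $\nabla l(\vec x^1)\cdot \gamma'_+(t^1)>0$, which rewrites as $\nabla g_{i(t^1)}(\vec x^1)\times\nabla l(\vec x^1)>0$. Simultaneously, Lemma \ref{nextCross} supplies $f(\vec x^1)\le f(\hat{\vec x})$; since $\vec x^1\in r^i(\hat{\vec x})$ and the restriction of $f$ to the line $l=0$ is concave, invoking Lemma \ref{lemma:anfunc} to discard the degenerate case in which $f$ is constant on the entire line forces $\vec x^1$ to lie strictly past the maximizer of $f$ along $r^i(\hat{\vec x})$. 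Hence the directional derivative of $f$ at $\vec x^1$ along the ray direction $\vec u$ is non-positive, and by Lemma \ref{lemma_cross_prod} this pins down the sign of $\nabla f(\vec x^1)\times\nabla l(\vec x^1)$.

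With this data I would then apply Lemma \ref{lemma:kkt_prod_signs} at $\vec x^1$ to the pair of active constraints $-l$ and $g_{i(t^1)}$: the premise $\nabla(-l)(\vec x^1)\times\nabla g_{i(t^1)}(\vec x^1)>0$ holds by the first step. Of the two cross-product conditions characterizing KKT, one is precisely the sign of $\nabla f\times\nabla l$ obtained above, while the other, involving $\nabla f\times\nabla g_{i(t^1)}$, is by Lemma \ref{lemma_cross_prod} the statement (ii) that $f\circ\gamma$ is non-increasing at $t^1$ from the right. Thus (i) and (ii) collapse to a single sign condition at $\vec x^1$, and verifying either one automatically yields the other.

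The hard part, and the main obstacle, is verifying that remaining sign, namely $f(\gamma(t^1+\epsilon))\le f(\vec x^1)$ for small $\epsilon>0$. The plan is to mirror the sub-region argument from the proof of Lemma \ref{nextCross}, but now built on the $\{l\ge 0\}$ side of the line. Assuming for contradiction that $f\circ\gamma$ strictly increases past $t^1$, one would construct the analogue of $L_1$ and $S_1$ on that side, whose boundary consists of a piece of $\gamma$ past $t^1$, a level curve of $f$ at the next local minimum of $f\circ\gamma$, and the ray $r^d(\vec x^1)$ along which $f(\vec x)\le f(\vec x^1)$ by Lemma \ref{lemma_rays1}. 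Connectedness of $F$ together with Lemma \ref{lemma_bnd}, and Lemma \ref{lemma_kkt_locmax} applied through boundary-invexity, would then force $f\le f(\vec x^1)$ on this sub-region and contradict the assumed strict increase. The delicate point is to check that the analogue of $S_1$ lies on the correct side of $\gamma$ near $\vec x^1$ and that the two exit options (crossing the level curve or crossing $l=0$) carry over verbatim.
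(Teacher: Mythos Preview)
Your reduction of the lemma to the three cross-product signs is exactly what the paper does: the transversality sign $(-\nabla l)\times\nabla g_{i(t^1)}(\vec x^1)>0$ and the sign $\nabla f(\vec x^1)\times(-\nabla l)\geq 0$ (from $f(\vec x^1)\le f(\hat{\vec x})$ and concavity along the line) are established just as you describe, and you correctly identify that everything hinges on the remaining sign $\nabla f(\vec x^1)\times\nabla g_{i(t^1)}(\vec x^1)\geq 0$, equivalently that $f\circ\gamma$ is non-increasing at $t^1$ from the right.

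The gap is in your plan for that remaining sign. You propose to assume $f\circ\gamma$ strictly increases past $t^1$ and build an $S_1$-type region on the half-plane $\{-l\le 0\}$ anchored at $\vec x^1$. But under the contradiction hypothesis, $\gamma(t^1+\epsilon)$ lies in $F\cap\{-l\le 0\}$ with $f(\gamma(t^1+\epsilon))>f(\vec x^1)$, so $\vec x^1$ is \emph{not} a local maximizer there; you therefore cannot invoke Lemma~\ref{lemma_kkt_locmax} or Lemma~\ref{lemma_bnd} at $\vec x^1$ on that side, and the argument is circular (KKT on that side is precisely what you are proving). Relatedly, your boundary description ``a piece of $\gamma$ past $t^1$, a level curve at the next local minimum'' does not match the Lemma~\ref{nextCross} construction, which starts from a point where $f\circ\gamma$ is non-increasing and runs to the first local minimum; under your assumption no such piece exists past $t^1$.

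The paper avoids this by staying on the $\{l\le 0\}$ side, where $\hat{\vec x}$ is already a KKT point by hypothesis. If $f\circ\gamma$ were strictly increasing at $t^1$, take the last local minimum $t^i_{\min}\in(\hat t,t^1)$ so that $f\circ\gamma$ is monotone on $[t^i_{\min},t^1]$. The set $S_i$ built (exactly as in the proof of Lemma~\ref{nextCross}) from $\hat{\vec x}$ up to $t^i_{\min}$ meets the line only along $r^d(\hat{\vec x})$; since $\vec x^1\in r^i(\hat{\vec x})$, $\gamma$ must exit $S_i$ before $t^1$. The two exit modes give contradictions: crossing the line before $t^1$ contradicts the definition of $t^1$ as the first crossing, and crossing the level curve forces $f\circ\gamma$ to decrease on $[t^i_{\min},t^1]$, contradicting monotonicity. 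Redirect your contradiction argument to the $\{l\le 0\}$ side in this way and the proof goes through.
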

	\begin{proof}
		Let $t^1$ be defined similarly to Lemma \ref{nextCross} and $\vec x^1 = \gamma(t^1)$.
		
		It follows immediately from the definition of $\vec x^1$ that $l(\vec x^1) = 0$.
		
		First let us prove that $f(\gamma(t))$ is non-increasing as a function of $t$ at $t^1$. Assume the contrary: $f(\gamma(t))$ strictly increases as a function of $t$ at $t^1$. Then there exists a $t^i_{min}$ such that $\hat t < t_{min}^i < t^1$ and $f(\gamma(t))$ is monotone on the $[t^i_{min}, t^1]$ interval.
		
		Then there exists a set $S_i$ and, as proved in the previous lemma, if $\vec x \in r^i(\hat {\vec x})$ then $\vec x \notin S_i$. Then $\gamma(t)$ has to exit $S_i$ at some $t<t^1$. There are two possibilities:
		
		\begin{enumerate}
		\item $\gamma(t)$ crosses $r^d(\hat {\vec x})$. This contradicts with $\gamma(t^1) \in r^i(\hat {\vec x})$, 
		\item $\gamma(t)$ crosses the level curve. Then $f(\gamma(t))$ decreases somewhere between $t_{min}^1$ and $t^1$. This contradicts with $f(\gamma(t))$ being monotonic on $[t^i_{min},t^1]$.
		\end{enumerate}		
		This proves that $f(\gamma(t))$ is non-increasing at $t^1$. 		\\
		Now we shall show that $x^1$ is a local maximizer of $f$ in $F \cap l(\vec x) \geq 0$.		\\
		By Lemma \ref{lemma_cross_prod}, $f(\gamma(t))$ being non-increasing at $t^1$ implies that:
		
		\begin{equation}\label{loc_opt1} \nabla f(\vec x^1) \times \nabla g_{i(t^1)}(\vec x^1) \geq 0.\end{equation}
		
		Since $\gamma(t)$ crosses the line from the $l(\vec x) \leq 0$ half-space into the $l(\vec x) \geq 0$ half-space at $\vec x^1$ , $l(\gamma(t))$ is increasing at $t^1$ and thus, by Lemma \ref{lemma_cross_prod}, we have that $\nabla l \times \nabla g_{i(t^1)}(\vec x^1) \leq 0$ or, equivalently:
		
		\begin{equation}\label{loc_opt2} (-\nabla l) \times \nabla g_{i(t^1)}(\vec x^1) > 0.\end{equation}
		
		Finally, by Lemma \ref{nextCross}, $f(\vec x^1) \leq f(\hat{\vec x})$ and thus $\vec x^1$ belongs to the part of ray $r^i(\hat{\vec x})$ where $f$ is decreasing. If we consider the direction which $r^i(\hat{\vec x})$ points to as the positive direction of moving along the line, then the corresponding gradient is $-\nabla l$. Then Lemma \ref{lemma_cross_prod} implies that
		
		\begin{equation}\label{loc_opt3}\nabla f(\vec x^1) \times (-\nabla l) \geq 0.\end{equation}

		By Lemma \ref{lemma:kkt_prod_signs}, these inequalities imply that $\vec x^1$ is a KKT point in $F \cap \{l(\vec x) \geq 0\}$. Thus the conditions of Lemma \ref{nextCross} are satisfied at $\vec x^1$ for $F \cap \{l(\vec x) \geq 0\}$.
	\end{proof}
	\qed
	
\section{Kuhn-Tucker invexity of boundary-invex problems}\label{sec:main}



\subsection{Sequence of crossing points}

Consider a point $\vec x^*$ which is a local maximum of (\ref{nlp}) and a linear function $l(\vec x)$ such that $f$ is not constant on $l(\vec x) = 0$. Let $\gamma(0) = \vec x^*$.

Given two parameter values $r, s$, let $\hat \gamma(r,s)$ denote the segment of the $\gamma(t)$ curve with $t \in [r,s]$.

Let $\vec x^i$ be the $i^{th}$ point where $\gamma(t)$ crosses $l(\vec x)=0$ and let $t^i$ be a parameter value such that $\vec x^i = \gamma(t^i)$. Since $\gamma(t)$ is a closed curve, $\vec x^i$ exists for each $i \in \nit$ if at least one crossing point exists. 

%
%
%

%
%
%
%

The numbering of the crossing points will be chosen so that the even indices will correspond to $\gamma(t)$ crossing the line $l(\vec x) = 0$ from $l(\vec x) > 0$ into $l(\vec x) < 0$, and the odd indices will correspond to the opposite direction of crossing.


\begin{lemma}\label{lemma:cross_on_rd}
Consider a crossing point $\vec x^i$, $i \in 2\nit$. If $\nabla l \times \nabla f(\vec x^i) \geq 0$, then $\vec x^i$ satisfies Lemma \ref{nextCross} for either $l(\vec x)$ and $\gamma(t)$ or for $-l(\vec x)$ and $\gamma^r(t)$.
\end{lemma}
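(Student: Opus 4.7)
The plan is to verify the two hypotheses of Lemma~\ref{nextCross} at the crossing point $\vec x^i$ for one of the two configurations $(l,\gamma)$ and $(-l,\gamma^r)$: the KKT condition for the corresponding half-plane problem (\ref{nlpl}) (or its $-l$-analogue), and the Section~\ref{sec:splitting} standing assumption that $f$ is non-increasing along the chosen parametrization at the base point. The fixed ingredient is the hypothesis $\nabla l \times \nabla f(\vec x^i) \geq 0$, equivalently $\nabla f(\vec x^i) \times \nabla l \leq 0$; this is exactly one of the two sign conditions required on a KKT multiplier. The case split will be on the sign of $\nabla f(\vec x^i) \times \nabla g_{i(t^i)}(\vec x^i)$, which is both the remaining KKT sign and, by Lemma~\ref{lemma_cross_prod}, the indicator of whether $f$ is non-increasing along $\gamma$ or along $\gamma^r$ at $\vec x^i$.

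First I would use that $i \in 2\nit$: $\gamma$ crosses $\{l=0\}$ from $\{l>0\}$ into $\{l<0\}$ at $\vec x^i$, so $l(\gamma(t))$ is decreasing at $t^i$, and Lemma~\ref{lemma_cross_prod} gives $\nabla l(\vec x^i) \times \nabla g_{i(t^i)}(\vec x^i) \geq 0$, strictly positive by LICQ. This fixes the sign of the Cramer denominator from Lemma~\ref{lemma:kkt_prod_signs} applied with $g_1=l$, $g_2=g_{i(t^i)}$, so that nonnegativity of both multipliers of (\ref{nlpl}) at $\vec x^i$ is equivalent to $\nabla f \times \nabla l \leq 0$ (the hypothesis) together with $\nabla f \times \nabla g_{i(t^i)} \geq 0$. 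If $\nabla f(\vec x^i) \times \nabla g_{i(t^i)}(\vec x^i) \geq 0$, both conditions hold, so $\vec x^i$ is a KKT point of (\ref{nlpl}), and by Lemma~\ref{lemma_cross_prod} $f(\gamma(t))$ is non-increasing at $t^i$; Lemma~\ref{nextCross} then applies with $(l,\gamma)$. Otherwise $\nabla f(\vec x^i) \times \nabla g_{i(t^i)}(\vec x^i) \leq 0$, and I would switch to $(-l,\gamma^r)$: since $\gamma^r$ has tangent $-\gamma'(t^i)$ at $\vec x^i$, the case assumption now reads as $f(\gamma^r(t))$ being non-increasing at $\vec x^i$, supplying the Section~\ref{sec:splitting} setup. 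Rerunning Cramer with $g_1=g_{i(t^i)}$, $g_2=-l$ (for which $\nabla g_1 \times \nabla g_2 = \nabla l \times \nabla g_{i(t^i)} > 0$) then shows that multiplier nonnegativity for the $-l$ variant of (\ref{nlpl}) at $\vec x^i$ reduces exactly to $\nabla l \times \nabla f \geq 0$ and $\nabla f \times \nabla g_{i(t^i)} \leq 0$, both of which hold. Hence $\vec x^i$ is KKT for that problem and Lemma~\ref{nextCross} applies with $(-l,\gamma^r)$.

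The main delicacy is the sign bookkeeping accompanying the flip $(l,\gamma) \mapsto (-l,\gamma^r)$: the line-normal and the boundary tangent both reverse, and one must check that the two reversals conspire so that the hypothesis $\nabla l \times \nabla f \geq 0$ still identifies a nonnegative multiplier in the flipped half-plane problem. The case split on the sign of $\nabla f \times \nabla g_{i(t^i)}$ is the pivot: the hypothesis alone does not suffice to conclude KKT for (\ref{nlpl}), but paired with whichever sign of $\nabla f \times \nabla g_{i(t^i)}$ occurs at $\vec x^i$ it always suffices for exactly one of the two half-plane problems, and the flip $(l,\gamma) \leftrightarrow (-l,\gamma^r)$ is precisely the move that exchanges those two signs.
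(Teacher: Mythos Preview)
Your approach is essentially the same as the paper's: both use the parity of $i$ to fix the sign of $\nabla l \times \nabla g_{i(t^i)}$, then case-split on the sign of $\nabla f \times \nabla g_{i(t^i)}$ and invoke Lemma~\ref{lemma:kkt_prod_signs} to obtain the KKT property for the appropriate half-plane, with Lemma~\ref{lemma_cross_prod} simultaneously supplying the non-increasing prerequisite of Lemma~\ref{nextCross}. One bookkeeping remark: your crossing direction ($l>0 \to l<0$ for even $i$) matches the paper's stated convention, whereas the paper's own proof of this lemma uses the opposite direction; the sign chain you wrote for the $(l,\gamma)$ case should be double-checked against Lemma~\ref{lemma:kkt_prod_signs} (with $\nabla g_1 \times \nabla g_2 > 0$ the lemma gives $\nabla f \times \nabla g_1 \geq 0$ and $\nabla f \times \nabla g_2 \leq 0$, not the reversed pair), but this is a labeling issue rather than a gap in the argument.
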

\begin{proof}
	Since $\gamma(t)$ crosses the line from $l(\vec x) < 0$ into $l(\vec x) > 0$ at $\vec x^i$, we have that $\nabla l \times \nabla g_{i(t^i)}(\vec x^i) < 0$. 
	
	By Lemma \ref{lemma:kkt_prod_signs}, $\vec x^i$ is a KKT point in one of the following sets:
	
	\begin{enumerate}
		\item $F \cap \{l(\vec x) \leq 0\}$ if $\nabla f \times \nabla g_{i(t^i)} \geq 0$. The latter inequality also implies that Lemma \ref{nextCross} is satisfied at $\vec x^i$ for $\gamma(t)$ and $l(\vec x)$ (see the beginning of Section \ref{sec:splitting}).
		
		\item $F \cap \{-l(\vec x) \leq 0\}$ if $\nabla f \times \nabla g_{i(t^i)} \leq 0$.  The latter inequality implies that Lemma \ref{nextCross} is satisfied at $\vec x^i$ for $\gamma^r(t)$ and $-l(\vec x)$.
	\end{enumerate}
\end{proof}
\qed

Let $S(\overline{AB},\overline{BC},...) \subset F$ denote a set with the boundary comprised of some sections of $\partial F$ and segments $\overline{AB}$, $\overline{BC}$, $...$ on the line $l(\vec x = 0)$.

\begin{definition}
	$S(\overline{AB},\overline{BC},...) \subset F$ is a safe set if $f(\vec x) \leq f(\vec x^*) ~\forall \vec x \in S$.
\end{definition}

\begin{figure}[h]
	\includegraphics[scale=0.35]{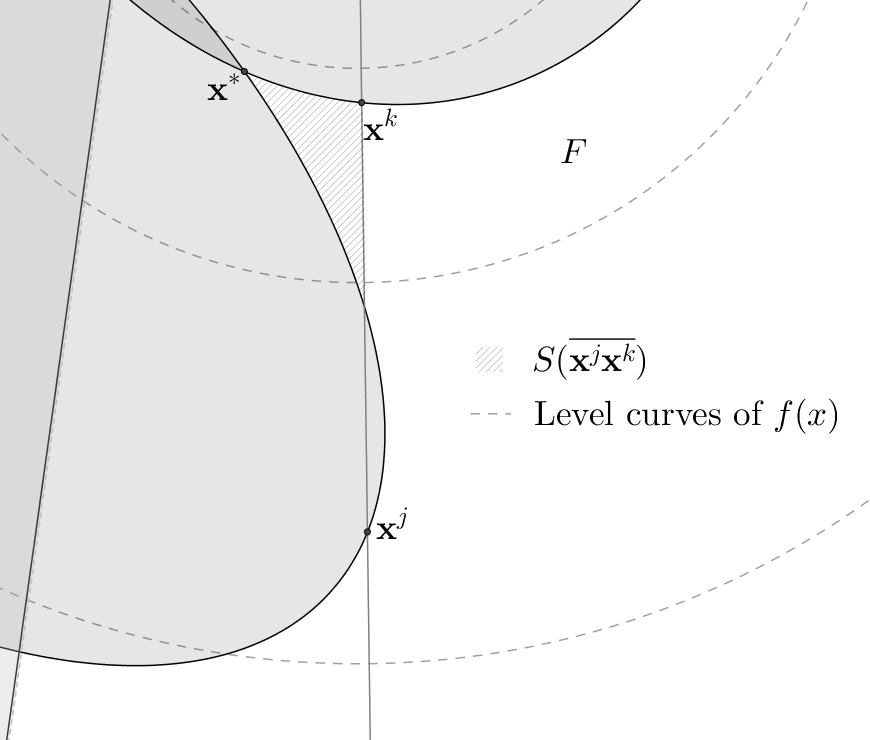}
	\caption{Points $\vec x^*$, $\vec x^j$, $\vec x^k$ and set $S(\overline{\vec x^j\vec x^k})$ satisfying the conditions of Theorem \ref{theorem_sequence}}
	\label{fig:Tstart}
\end{figure}
	
\begin{theorem}\label{theorem_sequence}
	Consider points $\vec x^j$, $\vec x^k \in F$ such that:
	
	$\vec x^k, ~k \in 2\nit$, satisfies Lemma \ref{nextCross} for $\gamma^r$ and $-l$, $f(\vec x^k) \leq f(\vec x^*)$;
	
	$\vec x^j \in r^d(\vec x^k), ~j \in 2\nit$, satisfies Lemma \ref{nextCross} for $\gamma$ and $l$;
	
	$f(\gamma(t)) < f(\vec x^*) ~\forall t \in [0,t^j]$;
	
	if $\vec x^j \neq \vec x^k$ and $\gamma(t)$ crosses $\overline{\vec x^j\vec x^k}$ from $l(\vec x)>0$ into $l(\vec x)<0$, it enters a safe set $S(\overline{\vec x^j\vec x^k})$ with the boundary consisting of $\overline{\vec x^j\vec x^k}$ and $\hat \gamma(t^k,t^{j-1})$.
	
%
%
%
	
	Then $x^*$ is the global optimum of (\ref{nlp}).
\end{theorem}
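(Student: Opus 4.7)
Since $\vec x^*$ is a local maximum of \eqref{nlp} and $F$ is connected, Lemma \ref{lemma_bnd} reduces the global optimality claim to showing $f(\vec x) \le f(\vec x^*)$ for every $\vec x \in \partial F$, i.e.\ $f(\gamma(t)) \le f(\vec x^*)$ for all $t \in [0,T]$. The third hypothesis already gives $f(\gamma(t)) < f(\vec x^*)$ on $[0,t^j]$, so the remaining task is to propagate the bound along the rest of $\gamma$ by inducting on the sequence of crossing points, using Lemmas \ref{nextCross} and \ref{lemma_tc_decr} as the engine.

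\textbf{Inductive propagation along crossings.} I would prove by induction on $i \ge j$ the invariant that $f(\gamma(t)) \le f(\vec x^*)$ on $[0,t^i]$ and $f(\vec x^i) \le f(\vec x^*)$. At the base point $\vec x^j$ (even index, so $\gamma$ enters $l<0$), Lemma \ref{nextCross} applies with $\gamma$ and $l$, giving $f(\gamma(t)) \le f(\vec x^j) < f(\vec x^*)$ on $[t^j,t^{j+1}]$. Lemma \ref{lemma_tc_decr} then transports the hypothesis of Lemma \ref{nextCross} to $\vec x^{j+1}$ for $\gamma$ and $-l$, producing the same bound on $[t^{j+1},t^{j+2}]$. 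Alternating between $l$ and $-l$, the inductive step repeats at every subsequent crossing.

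\textbf{Using the safe set to absorb re-entries.} The delicate case is when $\gamma$ returns to the segment $\overline{\vec x^j\vec x^k}$ itself. If the return crossing is from $l>0$ into $l<0$, the fourth hypothesis guarantees that $\gamma$ enters the safe set $S(\overline{\vec x^j\vec x^k})$, in which $f \le f(\vec x^*)$ by definition, so the bound persists on the corresponding arc of $\gamma$. In the opposite orientation, Lemma \ref{lemma:cross_on_rd} lets me recast the new crossing as satisfying the conditions of Lemma \ref{nextCross} either for $(\gamma,l)$ directly or, via $(\gamma^r,-l)$, chained onto the backwards bound that $\vec x^k$ already supplies; in either case the invariant is preserved.

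\textbf{Termination and expected obstacle.} Boundedness of $F$, the closedness $\gamma(T)=\gamma(0)=\vec x^*$, and Lemma \ref{lemma_no_self_intersect} (which forbids $\gamma$ from visiting any point twice) together ensure that only finitely many qualitatively distinct propagation steps occur, so the induction exhausts $[0,T]$ and Lemma \ref{lemma_bnd} concludes. The principal obstacle I expect is the bookkeeping of the preceding paragraph: verifying rigorously that the hypotheses of Lemma \ref{nextCross} genuinely transfer across every configuration (same half-plane, switched half-plane, return to $\overline{\vec x^j\vec x^k}$ in either orientation) and that the safe-set hypothesis is invoked with the correct crossing direction. Once those cases are enumerated cleanly, the rest of the argument is the straightforward assembly described above.
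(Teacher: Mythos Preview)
Your overall architecture (reduce to $\partial F$ via Lemma~\ref{lemma_bnd}, then control $f\circ\gamma$ arc-by-arc through the sequence of crossings) matches the paper, but the inductive engine you describe does not actually run. The sentence ``Lemma~\ref{lemma_tc_decr} then transports the hypothesis of Lemma~\ref{nextCross} to $\vec x^{j+1}$ for $\gamma$ and $-l$'' is the problem: Lemma~\ref{lemma_tc_decr} has the explicit hypothesis $\vec x^{j+1}\in r^i(\vec x^j)$. Nothing forces this. If the next crossing lands on $r^d(\vec x^j)$ instead, then at $\vec x^{j+1}$ the objective $f(\gamma(t))$ is typically \emph{increasing}, so $\vec x^{j+1}$ is not a KKT point of the $-l$ half-plane problem and Lemma~\ref{nextCross} cannot be re-invoked there. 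Your ``alternating between $l$ and $-l$'' step therefore fails at the very first iteration in this configuration, and Lemma~\ref{lemma:cross_on_rd} does not rescue it (that lemma concerns even-indexed crossings with a sign condition on $\nabla l\times\nabla f$, not odd crossings on $r^d$).

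The paper's proof is accordingly not an alternating induction but a position-based case analysis: one first splits on whether $\vec x^{j+1}$ lies on $r^d(\vec x^k)$ or on $r^i(\vec x^k)$. In the $r^d$ case one cannot propagate the Lemma~\ref{nextCross} hypothesis forward; instead one builds the safe set $S(\overline{\vec x^j\vec x^{j+1}})$ directly (using Lemma~\ref{lemma_bnd} on that subregion), merges it with the existing safe set, and then branches again on the location of $\vec x^{j+2}$. The genuinely delicate sub-case is when $\vec x^{j+2}\in r^i(\vec x^k)$: here the forward sweep stalls and the paper runs a \emph{backward} sweep from $\vec x^k$ along $\gamma^r$ (using the hypothesis on $\vec x^k$) to locate a point $\vec x^m$, establishes $f\le f(\vec x^*)$ on the intervening arc, and only then restarts the argument with the roles of $(\vec x^j,\vec x^k,l)$ replaced by $(\vec x^m,\vec x^{j+2},-l)$, after checking that the safe-set hypothesis is reconstituted for this new pair. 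Your third paragraph treats the safe-set hypothesis as relevant only to re-entries into $\overline{\vec x^j\vec x^k}$, but in fact it is the mechanism that replaces the failed Lemma~\ref{lemma_tc_decr} transfer whenever the crossing lands on the wrong ray, and the backward-from-$\vec x^k$ maneuver is an essential ingredient you have not anticipated.
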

\begin{proof}
	The conditions on $\vec x^k$ imply that $\nabla f(\vec x^k) \times \nabla l \leq 0$. By Lemma \ref{lemma_rays1}, $f$ is monotonically decreasing on the whole ray $r^d(\vec x^k)$ and thus $\nabla f(\vec x) \times \nabla l \leq 0 ~\forall \vec x \in r^d(\vec x^k)$. Then points $\vec x^i \in r^d(\vec x^k), ~i \in 2\nit$, satisfy conditions of Lemma~\ref{lemma:cross_on_rd}.

	Let us consider the following cases:
	
	\begin{enumerate}
		\item $\vec x^{j+1} \in r^d(\vec x^k)$.\\
		
		Let $S(\overline{\vec x^j\vec x^{j+1}})$ be the set with the boundary composed of $\hat \gamma(t^j,t^{j+1})$ and the segment $\overline{\vec x^j\vec x^{j+1}}$. By Lemma \ref{nextCross}, $f(\vec x) \leq f(\vec x^j) ~\forall \vec x \in \hat \gamma(t^j,t^{j+1})$. Since the segment $\overline{\vec x^j\vec x^{j+1}}$ is part of the $r^d(\vec x^j)$ ray, then by Lemma \ref{lemma_rays1}, $f$ is decreasing on this segment from $x^j$ in the direction of $x^{j+1}$ and thus $f(\vec x) \leq f(\vec x^j) ~\forall \vec x \in \overline{\vec x^j\vec x^{j+1}}$. Since $\vec x^j$ satisfies the conditions of Lemma \ref{nextCross}, it is a local maximum in $S(\overline{\vec x^j\vec x^{j+1}})$. Then, by Lemma \ref{lemma_bnd}, $f(\vec x) \leq f(\vec x^j) \leq f(\vec x^*) ~\forall \vec x \in S(\overline{\vec x^j\vec x^{j+1}})$. Thus $S(\overline{\vec x^j\vec x^{j+1}})$ is a safe set.\\
		
		By Lemma \ref{lemma_no_self_intersect}, $\gamma(t)$ cannot exit $S(\overline{\vec x^j\vec x^{j+1}})$ by crossing itself. Then the only way to exit $S(\overline{\vec x^j\vec x^{j+1}})$ is to cross the $\overline{\vec x_j\vec x_{j+1}}$ line segment again.\\
		
		Let $S(\overline{\vec x^{j}\vec x^k}, \overline{\vec x^j\vec x^{j+1}}) = S(\overline{\vec x^{j}\vec x^k}) \cup S(\overline{\vec x^j\vec x^{j+1}})$. Since it is a union of two safe sets, $S(\overline{\vec x^{j}\vec x^k}, \overline{\vec x^j\vec x^{j+1}})$ is a safe set.\\
		
		If $\vec x^{j+2} = \vec x^k$, then, by Lemma \ref{nextCross} applied to $x^k$, $-l$ and $\gamma^r$, $f(\vec x) \leq f(\vec x^k) \leq f(\vec x^*) ~\forall \vec x \in \hat \gamma(t^{j+1},t^{j+2})$. Since the conditions of the theorem imply that $f(\gamma(t)) \leq f(\vec x^*) ~\forall t \in [k,T]\cup[0,j]$, we have that $f(\gamma(t)) \leq f(\vec x^*) ~\forall t \in [0,T]$.\\
		
		We will consider the following cases that depend on the position of $\vec x^{j+2}$ on $l(\vec x) = 0$:\\
		
		
		\begin{enumerate}
			\item $\vec x^{j+2} \in \overline{\vec x^k\vec x^{j+1}}$.\\
			
			$\gamma(t)$ enters $S(\overline{\vec x^{j}\vec x^k}, \overline{\vec x^j\vec x^{j+1}})$ at $\vec x^{j+2}$. Since it is a safe set, $f$ cannot reach values larger than $f(\vec x^*)$ unless $\vec x^{j+3}$ exists. Repeat case (1) with $\vec x^{j+3}$ instead of $\vec x^{j+1}$.
			\\
			
			\item $\vec x^{j+2} \in r^d(\vec x^{j+1})$.\\
			
			$j{+}2 \in 2\nit$. Since $\vec x^{j+2} \in r^d(\vec x^{j+1}) \in r^d(\vec x^k)$ and, by Lemma \ref{lemma_rays1}, a concave function is always decreasing in the direction of local decrease from a given point, the monotonicity of $f$ on $l(\vec x) = 0$ at $\vec x^{j+2}$ is similar to that at $\vec x^k$. This implies that $sign(\nabla l \times \nabla f(\vec x^{j+2})) = sign(\nabla l \times \nabla f(\vec x^k)) \geq 0$. Then, by Lemma \ref{lemma:cross_on_rd}, one of the following is true at $\vec x^{j+2}$:
			
			\begin{enumerate}
			\item $f(\gamma(t))$ is increasing at $t^{j+2}$. Then at this point Lemma \ref{nextCross} can be applied for $\gamma^r$ and $-l$ to show that $f(\gamma(t)) \leq f(\vec x^{j+2}) ~\forall t \in (t^{j+1},t^{j+2})$. But by Lemma \ref{lemma_rays1}, $f$ is non-increasing on $r^d(\vec x^k)$ and $f(\vec x^{j+2}) < f(\vec x^{j+1})$. This contradicts with $f(\vec x^{j+2}) \leq f(\vec x^{j+1})$.

%
%
			
			\item $f(\gamma(t))$ is decreasing at $t^{j+2}$. Then Lemma \ref{nextCross} is satisfied at $\vec x^{j+2}$ for $\gamma$ and $l$. Then $\vec x^{j+2}, \vec x^k$ and the $l(\vec x)=0$ line satisfy the conditions of this theorem and the reasoning can be repeated from the start.
			\end{enumerate}

			\item $\vec x^{j+2} \in r^i(\vec x^k)$.\\
			
			\begin{figure}[h]
				\includegraphics[scale=0.4]{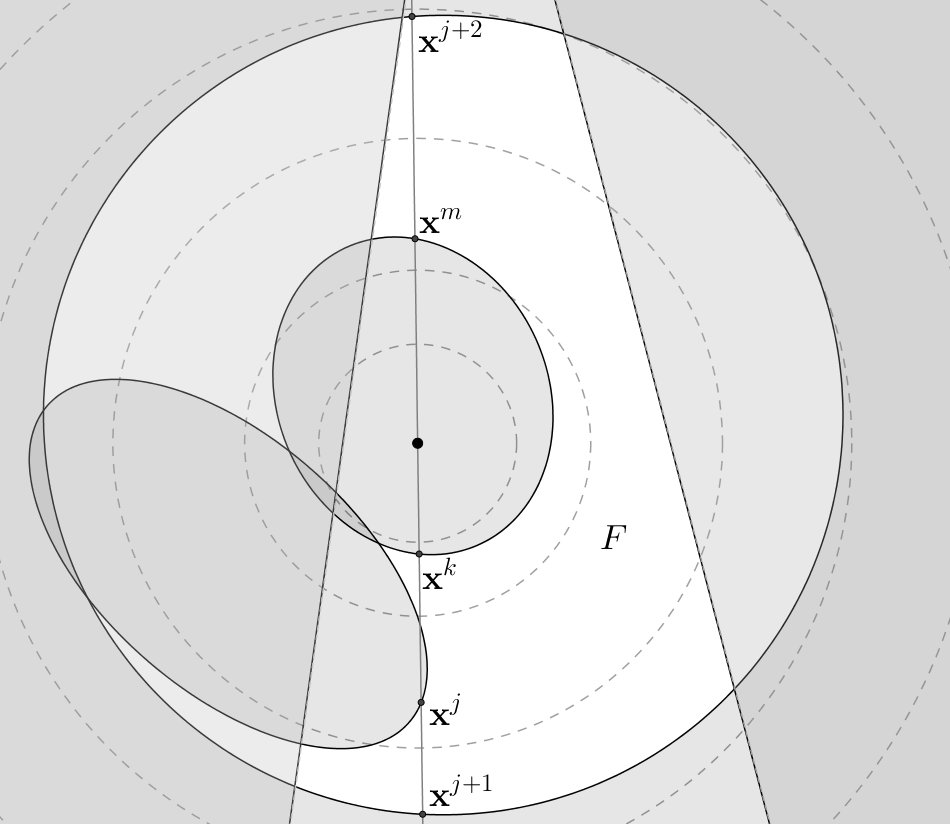}
				\caption{Case (1c): $\vec x^{j+1} \in r^d(\vec x^k)$}
				\label{fig_inv_case1}
			\end{figure}
			
			
			
			Let $S(\overline{\vec x^{j+1}\vec x^{j+2}})$ be the set with the boundary composed of $\overline{\vec x^{j+1}\vec x^{j+2}}$ and $\gamma(t^{j+1},t^{j+2})$. Let $S(\overline{\vec x^{j+2}\vec x^k}) = S(\overline{\vec x^{j+1}\vec x^{j+2}})$ $\cup S(\overline{\vec x^k\vec x^j})$ $\cup S(\overline{\vec x^j\vec x^{j+1}})$.\\
			
			At $\vec x^{j+2}$ $\gamma(t)$ leaves $S(\overline{\vec x^{j+2}\vec x^k})$. But $\vec x^k$ belongs to the boundary of $S(\overline{\vec x^{j+2}\vec x^k})$ and $\gamma(t)$ approaches $\vec x^k$ from the interior of this set. This implies that at some point $\gamma(t)$ enters $S(\overline{\vec x^{j+2}\vec x^k})$. Let $\vec x^m$ denote the last such point on $\gamma(t)$ before $x^k$. Then the next crossing point $\vec x^{m+1}$ can only belong to $\overline{\vec x^k\vec x^{j+1}}$.\\
			
			\textbf{$f(\gamma(t))$ is increasing at $t^m$}\\	
			
%
%
%
%
			
			Consider the point $\vec x^{k-1}$. If $\vec x^{k-1} = \vec x^m$, then the proof is done.\\
			
			Now suppose that $\vec x^{k-1} \neq \vec x^m$. The definition of $\vec x^m$ implies that $\vec x^{k-1} \in \overline{\vec x^k\vec x^{j+1}}$. The points following $\vec x^{k-1}$ on $\gamma^r(t)$ belong to one of the sets $S(\overline{\vec x^k\vec x^j})$, $S(\overline{\vec x^j\vec x^{j+1}})$. Thus $f(\gamma(t)) \leq f(\vec x^*) ~\forall t \in [k-1,k]$ and $\vec x^{k-2}$ exists and belongs to $\overline{x^kx^{j+1}}$. Consider the following cases:\\
			
			\begin{enumerate}
				\item $\vec x^{k-2} \in \overline{\vec x^k\vec x^{k-1}}$. Then $\gamma^r(t)$ enters the set $S(\overline{\vec x^k\vec x^{k-1}})$ and $\vec x^{k-3} \neq x^m$. Repeat case (1ci) with $\vec x^{k-3}$ instead of $\vec x^{k-1}$.\\
				
				\item $\vec x^{k-2} \in \overline{\vec x^{k-1}\vec x^{j-1}}$. Then, similarly to case (1b), $\vec x^{k-2}$ satisfies the conditions of Lemma \ref{nextCross} for $-l$ and $\gamma^r$. Thus $f(\gamma(t)) \leq f(\vec x^*) ~\forall t \in [t^{k-2},t^{k-3}]$. If $\vec x^{k-3} = \vec x^m$, by Lemma \ref{lemma_tc_decr} $\vec x^m$ satisfies the conditions of Lemma \ref{nextCross} for $l$ and $\gamma$. Otherwise repeat (1ci) and (1cii) with $\vec x^{k-2}$ instead of $\vec x^k$ and $\vec x^{k-3}$ instead of $\vec x^{k-1}$.\\
			\end{enumerate}
			
			We have proven that $f(\gamma(t)) \leq f(\vec x^*) ~\forall t \in [t^k,t^m]$ and $\vec x^m$ satisfies the conditions of Lemma \ref{nextCross} for $l$ and $\gamma$.\\

			\textbf{Starting a new iteration}\\

			Consider the set $S(\overline{\vec x^{j+2}\vec x^m})$ that contains the section of the $\gamma(t)$ curve from $t^m$ to $t^{j+2}$. If this set is disconnected, then there exist points $\vec x \in S(\overline{\vec x^{j+2}\vec x^m})$ that cannot be connected to the segment $\overline{\vec x^{j+2}\vec x^m}$ by a continuous path that belongs to this set. But since every feasible path from $S(\overline{\vec x^{j+2}\vec x^m})$ to $F \setminus S(\overline{\vec x^{j+2}\vec x^m})$ crosses $\overline{\vec x^{j+2}\vec x^m}$, this implies that there is no feasible path from $\vec x$ to points in $F \setminus S(\overline{\vec x^{j+2}\vec x^m})$ and thus $F$ is disconnected. This contradicts with the theorem assumptions. Hence $S(\overline{\vec x^{j+2}\vec x^m})$ is a connected set.\\
			
			We have shown that $f(\vec x) \leq f(\vec x^*)$ for all $\vec x$ on this curve. $\vec x^*$ is a local maximum in $S(\overline{\vec x^{j+2}\vec x^m})$. Then $f(\vec x) \leq f(\vec x^*) ~\forall \vec x \in S(\overline{\vec x^{j+2}\vec x^m})$.\\
			
			Case (1) of this theorem can be repeated with $\vec x^{j+2}$, $S(\overline{\vec x^{j+2}\vec x^m})$, $-l$ and $\vec x^m$ instead of $\vec x^{j+1}$, $S(\overline{\vec x^{j}\vec x^k}, \overline{\vec x^j\vec x^{j+1}})$, $l$ and $\vec x^k$.
			\\
			
		\end{enumerate}
		
		\begin{figure}[h]
			\includegraphics[scale=0.4]{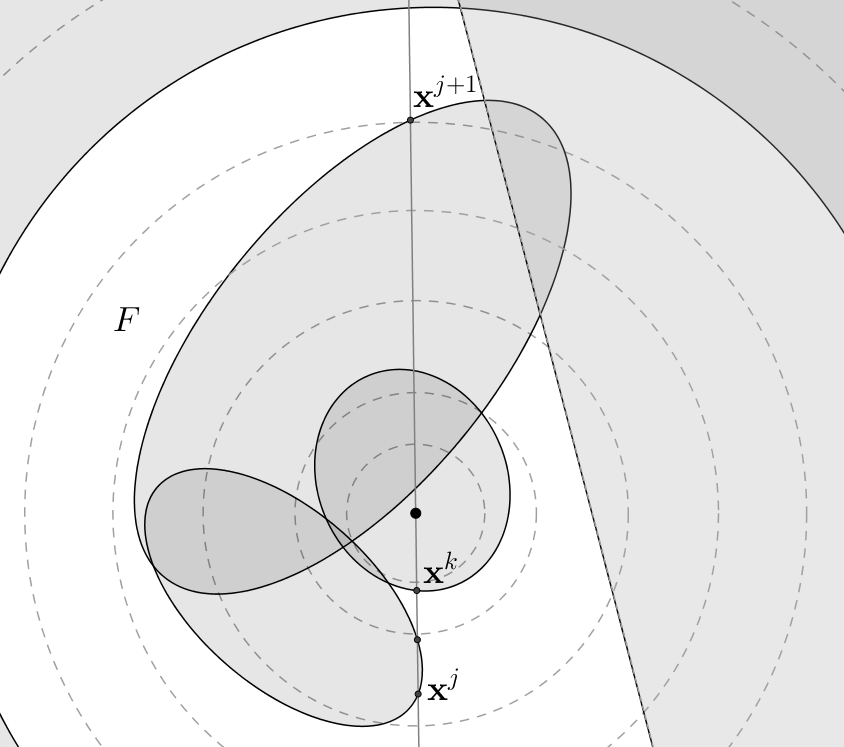}
			\caption{Case 2: $\vec x^{j+1} \in r^i(\vec x^k)$}
			\label{fig:case2}
		\end{figure}
		
		\item $\vec x^{j+1} \in r^i(\vec x^k)$. By Lemma \ref{lemma_tc_decr}, $f(\gamma(t))$ is decreasing at $t^{j+1}$ and $\vec x^{j+1}$ satisfies the conditions of Lemma \ref{nextCross}. Then $f(\vec x) \leq f(\vec x^{j+1})$ until the next crossing point $\vec x^{j+2}$.
		\begin{enumerate}
			\item $\vec x^{j+2} \in \overline{\vec x^k\vec x^{j+1}}$.
			
			The assumptions of this theorem imply that $f(\vec x^k) > f(\vec x_j) \geq f(\vec x^{j+1})$. This means that $\vec x^k$ belongs to the increasing section of the ray $r^i(\vec x^{j+1})$ and $f(\vec x) > f(\vec x^{j+1}) ~\forall \vec x \in \overline{\vec x^k\vec x^{j+1}}$. Then $f(\vec x^{j+2}) > f(\vec x^{j+1})$. Contradiction with $f(\vec x^{j+2}) \leq f(\vec x^{j+1})$.
			\\
			
			\item $\vec x^{j+2} \in r^d(\vec x^k)$.
			

			By applying Lemma \ref{lemma_tc_decr} to $\vec x^{j+2}$ we can show that this point satisfies the conditions of Lemma \ref{nextCross} for $\gamma$ and $l$. Then $\vec x^{j+2}$ has the same properties as $\vec x^j$. Repeat everything with same $\vec x^k$ and $\vec x^{j+2}$ instead of $\vec x^j$.
			\\
			
			\item $\vec x^{j+2} \in r^d(\vec x^{j+1})$.
			
			\begin{enumerate}
				\item $\vec x^{j+3} \in r^d(\vec x^{j+2})$. Repeat (2) with $\vec x^{j+3}$ instead of $\vec x^{j+1}$.
				
				\item $\vec x^{j+3} \in \overline{\vec x^{k}\vec x^{j+2}}$. From $\vec x^{j+2}$ $\gamma(t)$ cannot reach the line segment $\overline{\vec x^k\vec x_{j+1}}$ without crossing $\hat \gamma(t^j,t^{j+1})$. Then $\vec x^{j+2} \in \overline{\vec x^{j+1}\vec x^{j+2}}$ and $\gamma(t)$ enters a safe set $S(\overline{\vec x^{j+1}\vec x^{j+2}})$. Then $\vec x^{j+4} \in \overline{\vec x^{j+1}\vec x^{j+2}}$. Repeat (2c) with $\vec x^{j+4}$ instead of $\vec x^{j+2}$.
				
				\item $\vec x^{j+3} \in r^d(\vec x^k)$. Repeat (1) with $\vec x^{j+3}$ instead of $\vec x^{j+1}$.
			\end{enumerate}

			

		\end{enumerate}
	\end{enumerate}
	
	We have proven that $f(\gamma(t)) \leq f(\vec x^*) ~\forall t\in[0,T]$. By Lemma \ref{lemma_bnd}, together with the fact that $\vec x^*$ is a local maximum this implies that $\vec x^*$ is the global maximum of (\ref{nlp}).
	
\end{proof}
\qed

\subsection{The main theorem}

\begin{theorem}
	If (\ref{nlp}) is boundary-invex, then it is KT-invex.
\end{theorem}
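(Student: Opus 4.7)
Given a KKT point $\vec x^*$ of (\ref{nlp}), the plan is to first use the earlier machinery to guarantee that $\vec x^*$ is a local maximum, then promote this to global optimality via Theorem \ref{theorem_sequence}. The first step is immediate: by Lemma \ref{lemma_kkt_locmax}, boundary-invexity already implies that every KKT point is a local maximum. If in addition no constraint is active at $\vec x^*$, the KKT conditions force $\nabla f(\vec x^*) = 0$, and concavity of $f$ makes $\vec x^*$ an unconstrained global maximizer, settling the theorem. So I assume from now on that $\vec x^* \in \partial F$.

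Next, I reduce the problem to boundary optimality via Lemma \ref{lemma_bnd}: because $F$ is connected and $\vec x^*$ is a local maximizer, it suffices to prove that $f(\gamma(t)) \leq f(\vec x^*)$ for all $t \in [0,T]$, where $\gamma$ parametrizes $\partial F$ with $\gamma(0) = \vec x^*$. To invoke Theorem \ref{theorem_sequence} I would choose a linear function $l$ with $l(\vec x^*) = 0$ on which $f$ is not constant; this is possible because $f$ is real analytic and concave, so any direction at $\vec x^*$ not tangent to the level curve $\{f = f(\vec x^*)\}$ yields such an $l$. The most economical way to apply Theorem \ref{theorem_sequence} is with the degenerate choice $\vec x^j = \vec x^k = \vec x^*$, in which case the segment $\overline{\vec x^j\vec x^k}$ collapses to a point, the strict inequality $f(\gamma(t)) < f(\vec x^*)$ on $[0, t^j]$ holds vacuously, and the safe-set condition is empty. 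Adjoining the linear constraint $l(\vec x) \leq 0$ (resp. $-l(\vec x) \leq 0$) with multiplier zero keeps $\vec x^*$ a KKT point of the augmented problem, so by Lemma \ref{lemma:kkt_prod_signs} together with Lemma \ref{nextCross}, $\vec x^*$ simultaneously plays the role of $\vec x^k$ for $(\gamma^r, -l)$ and of $\vec x^j$ for $(\gamma, l)$. Theorem \ref{theorem_sequence} then yields $f(\gamma(t)) \leq f(\vec x^*)$ throughout $\partial F$, and Lemma \ref{lemma_bnd} closes the argument.

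The main obstacle I foresee is justifying that the degenerate starting configuration $\vec x^j = \vec x^k = \vec x^*$ is formally admissible in Theorem \ref{theorem_sequence}. If the statement as written does not allow it, the fallback is to follow $\gamma$ away from $\vec x^*$ into one of the half-planes cut out by $l = 0$ and continue until $\gamma$ first produces an even-indexed crossing (that is, a crossing from $l > 0$ to $l < 0$); this honest pair of crossing points can then be used to instantiate $\vec x^j$ and $\vec x^k$. Verifying that the constructed crossings satisfy the cross-product sign conditions of Lemmas \ref{lemma_cross_prod} and \ref{lemma:kkt_prod_signs}, and that $f(\gamma(t))$ stays below $f(\vec x^*)$ on the intervening arc, is exactly the same cross-product and ray-monotonicity bookkeeping that pervades Section \ref{sec:main}; I expect this to be the most delicate part of the plan, requiring a case split on the direction of local decrease of $f$ along $\gamma$ at $\vec x^*$ and on the half-plane into which $\gamma$ first departs.
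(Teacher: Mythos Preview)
Your overall plan coincides with the paper's: show $\vec x^*$ is a local maximum via Lemma~\ref{lemma_kkt_locmax}, then feed $\vec x^* = \vec x^j = \vec x^k$ into Theorem~\ref{theorem_sequence} for a suitable line $l(\vec x)=0$, and finish with Lemma~\ref{lemma_bnd}. The degenerate configuration is exactly what the paper uses.

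The gap is in your choice of $l$. The zero-multiplier observation does make $\vec x^*$ a KKT point of (NLP$_l$) and of (NLP$_{-l}$) for \emph{any} line through $\vec x^*$, but that is not enough to launch Theorem~\ref{theorem_sequence}. The index convention there requires $\vec x^j$ and $\vec x^k$ (both even) to be genuine crossings of $\gamma$ from $\{l>0\}$ into $\{l<0\}$; equivalently, $\gamma$ must leave $\vec x^*$ into $\{l<0\}$ in the forward direction and into $\{l>0\}$ in the backward direction. If two constraints $g_1,g_2$ are active at $\vec x^*$, the two one-sided tangents $\gamma'_+(0)$ and $-\gamma'_-(0)$ can easily lie on the \emph{same} side of a generic line through $\vec x^*$, in which case $\vec x^*$ is not a crossing at all and the machinery of Section~\ref{sec:main} does not start. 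Your appeal to Lemma~\ref{lemma:kkt_prod_signs} does not address this, because that lemma concerns signs of $\nabla f\times\nabla g_i$, not the transversality of $l$ to $\partial F$.

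The paper resolves this by an explicit choice: take $\nabla l = -\nabla g_1(\vec x^*) + \nabla g_2(\vec x^*)$. A direct computation then gives $\nabla l \times \nabla g_1(\vec x^*) > 0$ and $\nabla l \times \nabla g_2(\vec x^*) > 0$, which (via Lemma~\ref{lemma_cross_prod}) forces the two boundary arcs at $\vec x^*$ to lie in opposite half-planes, and simultaneously gives $\nabla f(\vec x^*)\times\nabla l < 0$ from the KKT inequalities~(\ref{cp1})--(\ref{cp2}), so $f$ is not constant on $l=0$. With this $l$ the degenerate instantiation of Theorem~\ref{theorem_sequence} goes through without the fallback you sketch. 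Your fallback would work in principle, but it is the harder route; the clean step you are missing is precisely this constructive choice of $l$.
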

\begin{proof}
	Let $\vec x^*$ be a KKT point. If $\vec x^*$ lies in the interior of $F$, then, by concavity of $f$, it is the global unconstrained maximum of $f$ and thus the global maximum for (\ref{nlp}).
	
	Now suppose that $\vec x^* \in \partial F$. Let $\gamma(0)=\gamma(T)=\vec x^*$ in the parametrization of $\partial F$. By Lemma \ref{lemma_bnd}, it is enough to consider only the values on the boundary. We need to prove that there exists a line $l(\vec x) = 0$ such that the conditions of Theorem \ref{theorem_sequence} are satisfied for the point $\vec x^* = \vec x_j = \vec x_k$.
	
	If $\nabla g_1(\vec x^*) \times \nabla g_2(\vec x^*) = 0$, then $\nabla g_1(\vec x^*) = c\nabla g_2(\vec x^*)$, and LICQ is violated. Now suppose that $\nabla g_1(\vec x^*) \times \nabla g_2(\vec x^*) \neq 0$. Since $\vec u \times \vec v = -(\vec v \times \vec u)$, we can assume w.l.o.g. that $\nabla g_1(\vec x^*) \times \nabla g_2(\vec x^*) < 0$. Then, by Lemma \ref{lemma:kkt_prod_signs}, the following holds:
	
	\begin{gather}
	\nabla f(\vec x^*) \times \nabla g_1(\vec x^*) \geq 0 \label{cp1} \\
	\nabla f(\vec x^*) \times \nabla g_2(\vec x^*) \leq 0 \label{cp2}
	\end{gather}

	Consider a linear function $l(x_1,x_2)$ such that $l(\vec x^*) = 0$ and $\nabla l = -\nabla g_1(\vec x^*) + \nabla g_2(\vec x^*)$. By (\ref{cp1}), (\ref{cp2}) we have:
	
	$$\nabla f(\vec x^*) \times \nabla l = -\nabla f(\vec x^*) \times \nabla g_1(\vec x^*) + \nabla f(\vec x^*) \times \nabla g_2(\vec x^*) < 0$$
	and
	\begin{gather*}
	\nabla l \times \nabla g_1(\vec x^*) = -\nabla g_1(\vec x^*) \times \nabla g_1(\vec x^*) + \nabla g_2(\vec x^*) \times \nabla g_1(\vec x^*) = \\ \nabla g_2(\vec x^*) \times \nabla g_1(\vec x^*) > 0, \\
	\nabla l \times \nabla g_2(\vec x^*) = -\nabla g_1(\vec x^*) \times \nabla g_2(\vec x^*) + \nabla g_2(\vec x^*) \times \nabla g_2(\vec x^*) = \\ -\nabla g_1(\vec x^*) \times \nabla g_2(\vec x^*) > 0.
	\end{gather*}
	
	
	By Lemma \ref{lemma:kkt_prod_signs}, these inequalities imply that $\vec x^*$ is a KKT point in both $F \cap \{l(\vec x) \leq 0\}$ and	$F \cap \{l(\vec x) \geq 0\}$. $\nabla f(\vec x^*) \times \nabla l < 0$ also implies that $f$ is non-constant on $l(\vec x) = 0$. Theorem \ref{theorem_sequence} can then be applied to show that $\vec x^*$ is the global maximum of (\ref{nlp}).
\end{proof}
\qed

\section{Application}\label{sec:app}

	\subsection*{Notations}
	\begin{tabular}{l}		
   {$\bm i$} - imaginary number constant\\
   {$S = p+ \bm iq$} - Electric power\\
   {$V = v \angle \theta$}  - Voltage\\
   {$\bm Y = \bm g + \bm i \bm b$}  - Line admittance\\
   {$W \!= w^R \!+ \bm i w^I $}  - Product of two voltages\\
   {$\bm s^u$} - Line apparent power thermal limit\\
   {$\theta_{ij}$} - Phase angle difference (i.e., $\theta_i - \theta_j$)\\
   {$\bm S^d = \bm p^d+ \bm i \bm q^d$} - Power demand\\
   {$S^g = p^g+ \bm iq^g$} - Power generation\\
   {$\bm c_0, \bm c_1, \bm c_2$} - Generation cost coefficients \\
    {$\Re(\cdot)$} - Real part of a complex number\\
    {$\Im(\cdot)$} - Imaginary part of a complex number\\
    {$(\cdot)^*$} - Conjugate of a complex number\\
    {$|\cdot|$} - Magnitude of a complex number, $l^2$-norm\\
   %
    {$\bm x^l, \bm x^u$} - Lower and upper bounds of $x$\\
    \end{tabular}

    \subsection{The Power Flow Equations}
In Power Systems, the Alternating Current (AC) power flow equations link the complex quantities of voltage $V$, power $S$, and admittance
$Y$, using Ohm's and Kirchhoff's Current Laws.
They can be written as,
\begin{subequations}
\begin{align}
& S^g_i - {\bm S^d_i} = \sum_{\substack{(i,j)\in E}} S_{ij} + \sum_{\substack{(j,i)\in E}} S_{ij} \;\; \forall i\in N \label{eq:ac_kcl} \\ 
& S_{ij} = \bm Y^*_{ij} V_i V^*_i - \bm Y^*_{ij} V_i V^*_j \;\; (i,j),(j,i) \in E \label{eq:ac_ohm}
\end{align}
\end{subequations}
\noindent
A detailed derivation of these equations can be found in \cite{qc_opf_tps}.
The non-convex nonlinear equations \eqref{eq:ac_kcl}--\eqref{eq:ac_ohm} form the core building block of many power network optimization applications.
These equations are usually augmented with side constraints such as,
%
\begin{align}
& \Re(\bm {S^{gl}}_i) \leq \Re(S^g_i) \leq \Re(\bm {S^{gu}}_i) \;\; \forall i \in N \label{eq:p_gen}  \\
& \Im(\bm {S^{gl}}_i) \leq \Im(S^g_i) \leq \Im(\bm {S^{gu}}_i) \;\; \forall i \in N \label{eq:q_gen} \\
& ( \bm {v^l}_{i} )^2 \leq |V_i|^2 \leq ( \bm {v^u}_{i} )^2 \;\; \forall i \in N \label{eq:v_mag_sqr} \\
& |S_{ij}| \leq \bm {s^u}_{ij} \;\; \forall (i,j),(j,i) \in E \label{eq:thermal_limit} \\
& \tan(\bm {\theta^l}_{ij}) \Re(V_iV^*_j) \! \leq \!  \Im(V_iV^*_j) \! \leq \! \tan(\bm {\theta^u}_{ij}) \Re(V_iV^*_j) \;  \forall (i,j) \! \in \!  E.  \label{eq:w_pad} 
\end{align}
Constraints \eqref{eq:p_gen}--\eqref{eq:q_gen} set limits on the real and reactive generator capabilities, respectively.
Constraints \eqref{eq:v_mag_sqr} limit the magnitudes of bus voltages.
Constraints \eqref{eq:thermal_limit} limit the power flow on the lines and constraints \eqref{eq:w_pad} limit the difference of the phase angles (i.e., $\theta_i, \theta_j$) between the lines' buses.  A detailed derivation and further explanation of these operational side constraints can be found in \cite{qc_opf_tps}.
\subsection{Optimal Power Flow}
\begin{model}[!h]
\caption{ The AC Optimal Power Flow Problem (AC-OPF).}
\label{model:ac_opf_w}
\begin{subequations}
\vspace{-0.2cm}
\begin{align}
& \mbox{\bf variables: } \nonumber \\
& S^g_i \in ( \bm {S^{gl}}_i, \bm {S^{gu}}_i) \;\; \forall i\in N \nonumber \\
& V_i \in ( \bm {V^l}_i, \bm {V^u}_i ) \;\; \forall i\in N \nonumber \\
& W_{ij} \in ( \bm {W^l}_{ij}, \bm {W^u}_{ij} )  \;\; \forall i \in N,  \forall j \in N \nonumber \\
& S_{ij} \in (\bm {S^{l}}_{ij},\bm {S^{u}}_{ij})\;\; \forall (i,j),(j,i) \in E \nonumber \\
& \mbox{\bf minimize: } \nonumber \\ 
& \sum_{i \in N} \bm c_{2i} (\Re(S^g_i))^2 + \bm c_{1i}\Re(S^g_i) + \bm c_{0i} \label{w_obj} \\
& \mbox{\bf subject to: } \nonumber  \\
& \angle V_{\bm r} = 0 \\
&  W_{ij} = V_iV_j^* \;\; \forall (i,j)\in E \label{w_2} \\
& S^g_i - {\bm S^d_i} = \sum_{\substack{(i,j)\in E}} S_{ij} + \sum_{\substack{(j,i)\in E}} S_{ij}\;\; \forall i\in N \label{w_5} \\ 
& S_{ij} = \bm Y^*_{ij} W_{ii} - \bm Y^*_{ij} W_{ij} \;\; \forall (i,j)\in E \label{w_6} \\
& S_{ji} = \bm Y^*_{ij} W_{jj} - \bm Y^*_{ij} W_{ij}^* \;\; \forall (i,j)\in E \label{w_7} \\
& |S_{ij}| \leq (\bm {s^u}_{ij}) \;\; \forall (i,j),(j,i) \in E \label{w_8} \\
& \tan(\bm {\theta^l}_{ij}) \Re(W_{ij}) \! \leq \! \Im(W_{ij}) \! \leq \! \tan(\bm {\theta^u}_{ij}) \Re(W_{ij}) \;  \forall (i,j) \! \in \!  E \label{w_9} \nonumber \\
\end{align}
\end{subequations}
\end{model}

The AC Optimal Power Flow Problem (ACOPF) combines the above power flow equations, side constraints, and a convex objective function as described in Model \ref{model:ac_opf_w}.
This formulation utilizes a voltage product factorization $V_i V_j^* = W_{ij} \;\; \forall (i,j)\in E$.
Model \ref{model:ac_opf_w} is a non-convex nonlinear optimization problem, which has been shown to be NP-Hard in general \cite{verma2009power,ACSTAR2015}.
In real-world deployments, the AC-OPF problem is solved with numerical methods such as \cite{744492,744495}, which are not guaranteed to converge to a feasible point and provide only stationary points (e.g., saddle points or local minimas) when convergence is achieved.

In the following section we look at a family of ACOPF problems with two degrees of freedom and show that they are boundary-invex under mild assumptions on the variables' bounds. Namely, we will enforce that 
$$-\frac{\pi}{6} \leq \bm \theta^l < \bm \theta^u \leq \frac{\pi}{6} \text{ and } 0.95 \leq \bm {v^l}_{i} < \bm {v^u}_{i} \leq 1.05.$$

%

\subsection{Boundary-invex ACOPF}
\begin{model}[h!]
	\caption{ AC-OPF for 1-line networks.}
	\label{model:ac_opf_1}
	\begin{subequations}
		\vspace{-0.2cm}
		\begin{align}
		& \mbox{\bf variables: } \nonumber \\
		& w^R,~ w^I \nonumber \\
		& \mbox{\bf minimize: } \nonumber \\ 
		& \bm c_1(\bm g\bm w - \bm gw^R - \bm bw^I) + \bm c_2(\frac{\bm g}{\bm w}((w^R)^2+(w^I)^2) - \bm gw^R + \bm bw^I) \\
		& \mbox{\bf subject to: } \nonumber  \\
		& (\bm g\bm w - \bm gw^R - \bm bw^I)^2 + (-\bm b\bm w + \bm bw^R - \bm gw^I)^2 \leq \bm s^u\\
		& (\frac{\bm g}{\bm w}((w^R)^2+(w^I)^2) - \bm gw^R + \bm bw^I)^2 \nonumber \\
		& + (-\frac{\bm b}{\bm w}((w^R)^2+(w^I)^2) + \bm bw^R + \bm gw^I)^2 \leq \bm s^u \label{therm_lim2}\\
		& (\bm{p}_1^g)^l - \bm p_1^d \leq \bm g\bm w - \bm gw^R - \bm bw^I \leq (\bm{p}_1^g)^u - \bm p_1^d\\
		& (\bm{q}_1^g)^l - \bm q_1^d \leq -\bm b\bm w + \bm bw^R - \bm gw^I \leq (\bm{q}_1^g)^u - \bm q_1^d\\
		& (\bm{p}_2^g)^l - \bm p_2^d \leq \frac{\bm g}{\bm w}((w^R)^2+(w^I)^2) - \bm gw^R + \bm bw^I \leq (\bm{p}_2^g)^u - \bm p_2^d \label{pbound} \\
		& (\bm{q}_2^g)^l - \bm q_2^d \leq -\frac{\bm b}{\bm w}((w^R)^2+(w^I)^2) + \bm bw^R + \bm gw^I \leq (\bm{q}_2^g)^u - \bm q_2^d \label{qbound}\\
		& (\bm {v^l}_{2})^2 \leq \frac{(w^R)^2+(w^I)^2}{\bm w} \leq (\bm {v^u}_{2})^2 \label{wbound}\\
		& w^R\tan(\bm\theta^l) \leq w^I \leq w^R\tan(\bm\theta^u) \label{tbound}
		\end{align}
	\end{subequations}
\end{model}

\begin{figure}[h]
	\includegraphics[scale=0.45]{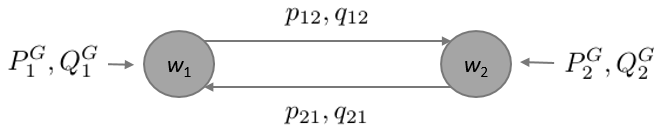}
	\caption{1-line network\label{fig:2d}}
\end{figure}

Consider a 2-bus network with one line and two generators as depicted in Figure \ref{fig:2d}.
We assume the voltage magnitude to be fixed at node $1$.
For clarity purposes we will adopt the following notations: $\bm w = \bm w_1$, $w^R = w^R_{12}$, $w^I = w^I_{12}$ and $\bm s^u = \bm s^u_{12}$.
The real-number formulation of Model \ref{model:ac_opf_w} is given in Model \ref{model:ac_opf_1}.



This model has four non-convex constraints, namely, (\ref{therm_lim2}), (\ref{pbound}), (\ref{qbound}) and (\ref{wbound}).

\begin{figure}[h]
	\includegraphics[scale=0.4]{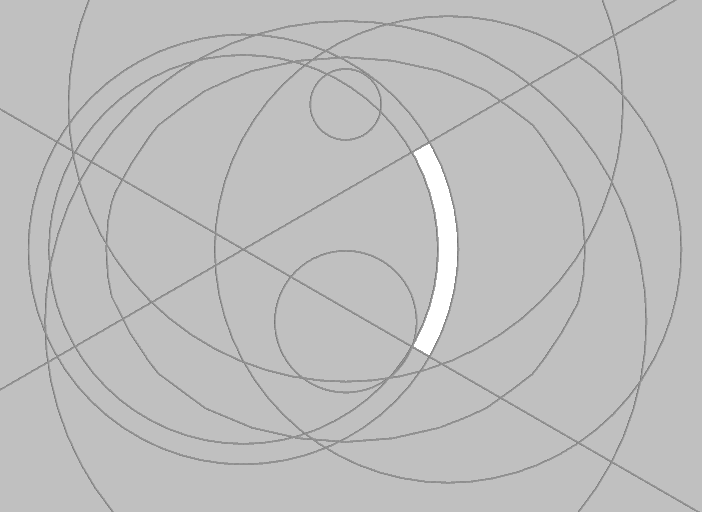}
	\caption{Feasible set of Model 2 (feasible region in white)}
\end{figure}

\paragraph{Minimal feasible $w^R$}

\begin{lemma}\label{lemma:minfeas}
	 If $w^R \leq 0.77\bm w$, then $(w^R,w^I)$ is infeasible for Model \ref{model:ac_opf_1}.
\end{lemma}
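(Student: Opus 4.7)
My plan is to exploit two of the side constraints in Model~\ref{model:ac_opf_1}, namely the voltage-magnitude bound \eqref{wbound} and the phase-angle bound \eqref{tbound}, together with the assumed numerical ranges $\bm\theta^l,\bm\theta^u\in[-\pi/6,\pi/6]$ and $\bm v^l_i,\bm v^u_i\in[0.95,1.05]$, to produce a lower bound of the form $w^R\geq c\,\bm w$ with $c>0.77$. Any $(w^R,w^I)$ violating this inequality is automatically infeasible.

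First I would use \eqref{tbound} to show that $w^R$ must be non-negative and that $|w^I|$ is controlled by $w^R$. Writing the two sides of \eqref{tbound} together gives $w^R(\tan\bm\theta^u-\tan\bm\theta^l)\geq 0$, and since $\bm\theta^l<\bm\theta^u$ inside $(-\pi/2,\pi/2)$ the tangent difference is strictly positive, forcing $w^R\geq 0$. From \eqref{tbound} and the bounds $|\bm\theta^l|,|\bm\theta^u|\leq \pi/6$, I obtain
\begin{equation*}
|w^I|\;\leq\;w^R\max\bigl(|\tan\bm\theta^l|,|\tan\bm\theta^u|\bigr)\;\leq\;\frac{w^R}{\sqrt{3}}.
\end{equation*}

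Next I would plug this into \eqref{wbound}. Feasibility requires $(w^R)^2+(w^I)^2\geq(\bm v^l_2)^2\bm w\geq 0.95^2\,\bm w=0.9025\,\bm w$, and combining with the tangent bound yields
\begin{equation*}
\tfrac{4}{3}(w^R)^2\;\geq\;(w^R)^2+(w^I)^2\;\geq\;0.9025\,\bm w,
\end{equation*}
so $(w^R)^2\geq 0.676875\,\bm w$, i.e.\ $w^R\geq\sqrt{0.676875}\,\sqrt{\bm w}>0.822\,\sqrt{\bm w}$. Finally, since the voltage magnitude at node~1 is fixed and bounded above by $\bm v^u_1\leq 1.05$, we have $\bm w=v_1^2\leq 1.1025$, so $\sqrt{\bm w}\geq \bm w/1.05$, and therefore
\begin{equation*}
w^R\;\geq\;\frac{0.822}{1.05}\,\bm w\;>\;0.77\,\bm w.
\end{equation*}
Contrapositively, $w^R\leq 0.77\,\bm w$ cannot hold at any feasible point.

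The steps themselves are elementary algebra; the only subtlety I foresee is being careful with the direction of the inequalities when passing through the square root, and justifying $w^R\geq 0$ before dividing or squaring. A minor bookkeeping point is to verify that $\max(|\tan\bm\theta^l|,|\tan\bm\theta^u|)\leq\tan(\pi/6)=1/\sqrt3$ regardless of whether $0$ lies in $[\bm\theta^l,\bm\theta^u]$, which is immediate from monotonicity of $\tan$ on $(-\pi/2,\pi/2)$ together with the bound $[\bm\theta^l,\bm\theta^u]\subseteq[-\pi/6,\pi/6]$. Everything else is a direct chain of substitutions, and the chosen constants leave a comfortable gap between the derived bound ($\approx 0.783$) and the stated threshold ($0.77$).
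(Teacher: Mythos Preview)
Your proposal is correct and follows essentially the same route as the paper: combine the angle bound \eqref{tbound} with the voltage-magnitude lower bound in \eqref{wbound} to obtain $w^R\geq 0.82\sqrt{\bm w}$, then use $\bm w\leq 1.1025$ to convert this into $w^R>0.77\,\bm w$. Your treatment is in fact slightly cleaner than the paper's, since you bound $|w^I|$ directly via $\max(|\tan\bm\theta^l|,|\tan\bm\theta^u|)$ rather than restricting to the half-plane $w^I\geq 0$ and leaving the symmetric case implicit, but the underlying computation is identical.
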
 
\begin{proof}
Consider the lower bound on $w_2$ and voltage angle bounds. No feasible points exist where:

$$w^R\tan(\bm\theta^l) \leq w^I \leq w^R\tan(\bm\theta^u) \implies (w^R)^2 + (w^I)^2 < (\bm {v^l}_{2})^2\bm w $$

If $w^R \geq (\bm {v^l}_{2})^2\bm w$, the latter is always false. Suppose that $w^R < (\bm {v^l}_{2})^2\bm w$. Consider the $w^I \geq 0$ half-space. The lower angle bound is redundant here, and the remaining two inequalities can be written as:

$$w^I \leq w^R\tan(\bm\theta^u)$$
$$w^I < \sqrt{(\bm {v^l}_{2})^2\bm w - (w^R)^2}$$

The implication holds if the second inequality is dominated by the first:

$$w^R\tan(\bm\theta^u) < \sqrt{(\bm {v^l}_{2})^2\bm w - (w^R)^2} \Leftrightarrow$$
$$(w^R)^2\tan^2(\bm\theta^u) < (\bm {v^l}_{2})^2\bm w - (w^R)^2$$

It can be seen that only points with non-negative $w^R$ can satisfy constraint (\ref{tbound}). Then the above is equivalent to:

$$w^R < \sqrt{\frac{(\bm {v^l}_{2})^2\bm w}{\tan^2(\bm\theta^u)+1}}.$$

Since $\bm\theta^u \leq \frac{\pi}{6}$ and $(\bm {v^l}_{2})^2 \geq (0.95)^2$, we have that:

$$\sqrt{\frac{(\bm {v^l}_{2})^2\bm w}{\tan^2(\bm\theta^u)+1}} \geq \sqrt{\frac{(0.95)^2\bm w}{\frac{1}{3}+1}} \geq 0.82\sqrt{\bm w}$$

All $w^R < 0.82\sqrt{\bm w}$ are guaranteed to be infeasible. Since $\bm w \leq 1.1025$, it can be shown that $\sqrt{\bm w} \geq 0.95\bm w$ and thus all $w^R < 0.77\bm w$ are infeasible.
\end{proof}
\qed

\paragraph{$w_2$ lower bound}

Consider constraint (\ref{wbound}). Let $g_1(w^R,w^I) = (\bm {v^l}_{})^2 - \frac{(w^R)^2+(w^I)^2}{\bm w}$. 

\begin{lemma}\label{lemma:wbnd}
	KKT points of problem (\ref{nlpi}), $i = 1$ do not violate the boundary-invexity for Model \ref{model:ac_opf_1}.
\end{lemma}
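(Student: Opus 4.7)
The plan is to verify Definition \ref{def:bi} for every KKT point $\vec x^* = (w^R, w^I)$ of (\ref{nlpi}) with $i = 1$, i.e.\ of $\min f$ subject to $g_1(w^R, w^I) \geq 0$, by case-splitting on whether $\vec x^*$ lies in the interior or on the boundary of the disk $\{g_1 \geq 0\}$.

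If $g_1(\vec x^*) > 0$, complementary slackness forces the multiplier to vanish, so stationarity reduces to $\nabla f(\vec x^*) = 0$. Since the Hessian of $f$ equals $\frac{2\bm g \bm c_2}{\bm w} I$ and is positive definite when $\bm g, \bm c_2 > 0$, the point $\vec x^*$ is the unique global minimizer of $f$ on $\mathbb{R}^2$. In particular $(w^R)^2 + (w^I)^2 < (\bm v^l)^2 \bm w$, which directly violates the lower bound in constraint (\ref{wbound}) of Model \ref{model:ac_opf_1}. Hence $\vec x^*$ is infeasible for Model \ref{model:ac_opf_1} and condition (1) of Definition \ref{def:bi} is satisfied. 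In the degenerate subcase where additionally $|\vec x^*|^2 = (\bm v^l)^2 \bm w$, $\vec x^*$ is the global minimizer of $f$, hence trivially a local maximum with respect to Model \ref{model:ac_opf_1}, so condition (3) applies.

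If $g_1(\vec x^*) = 0$, stationarity reads $\nabla f(\vec x^*) = \mu \nabla g_1(\vec x^*)$ with $\mu \geq 0$, and since $\nabla g_1 = -\frac{2}{\bm w}(w^R, w^I)$, the gradient $\nabla f$ points radially toward the origin. Writing this component-wise and dividing the two equations eliminates $\mu$ and yields the fixed angular ratio $w^I / w^R = \frac{\bm b(\bm c_1 - \bm c_2)}{\bm g(\bm c_1 + \bm c_2)}$, so $\vec x^*$ is pinned to the ray through the unconstrained minimizer of $f$; combined with $(w^R)^2 + (w^I)^2 = (\bm v^l)^2 \bm w$, this pins $w^R$ to a closed-form expression in $\bm c_1, \bm c_2, \bm g, \bm b, \bm w, \bm v^l$. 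I would then argue that the prescribed ranges $0.95 \leq \bm v^l \leq 1.05$ and $-\tfrac{\pi}{6} \leq \bm\theta^l < \bm\theta^u \leq \tfrac{\pi}{6}$ force $\vec x^*$ to violate either the angular wedge constraint (\ref{tbound}) or the radial lower bound $w^R > 0.77\bm w$ of Lemma \ref{lemma:minfeas}; in either case $\vec x^*$ is infeasible for Model \ref{model:ac_opf_1} and condition (1) holds again.

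The main obstacle is the boundary case. In contrast to the clean interior argument, it requires a careful quantitative analysis that connects the closed-form location of the ``near point'' on the circle, which is determined solely by the cost coefficients and line admittance $\bm Y = \bm g + \bm i \bm b$, to the concrete voltage and angle bounds of the ACOPF model. The subtlety lies in parameter regimes where the ratio $w^I/w^R$ happens to fall inside the wedge $[\tan\bm\theta^l, \tan\bm\theta^u]$; here one must combine Lemma \ref{lemma:minfeas} with the upper bound $\bm v^l \leq 1.05$ and the structure of the power-flow quadratic to rule out $w^R > 0.77\bm w$, and handle the edge case $\mu = 0$ in which $\vec x^*$ coincides with the unconstrained minimizer of $f$ and lies exactly on the circle.
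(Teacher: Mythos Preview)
Your boundary case is the crux of the lemma, and it is left as a promissory note rather than a proof. You correctly derive the ratio $w^I/w^R = \bm b(\bm c_1 - \bm c_2)/\bm g(\bm c_1 + \bm c_2)$, but since the cost coefficients $\bm c_1,\bm c_2$ and the admittance components $\bm g,\bm b$ are free parameters of the model, this ratio can take any real value; in particular, for $\bm c_1 = \bm c_2$ it equals zero and the KKT point sits squarely inside the angular wedge of constraint~(\ref{tbound}), so the wedge-violation argument cannot work in general. Your fallback to Lemma~\ref{lemma:minfeas} then needs $w^R \leq 0.77\bm w$, but solving your own system with $\bm c_1=\bm c_2$ gives $w^R = \bm v^l\sqrt{\bm w}$, which exceeds $0.77\bm w$ for the admissible range of $\bm v^l$ and $\bm w$. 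So the ``careful quantitative analysis'' you allude to does not close the gap.

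The missing idea, and the one the paper actually uses, is to substitute the active constraint $(w^R)^2 + (w^I)^2 = (\bm v^l)^2\bm w$ directly into the objective. This replaces the only quadratic term $\tfrac{\bm g}{\bm w}\big((w^R)^2+(w^I)^2\big)$ by the constant $\bm g(\bm v^l)^2$, so the objective restricted to the circle becomes \emph{linear} in $(w^R,w^I)$. The stationarity condition then reads simply $-\bm g(\bm c_1+\bm c_2) = 2\lambda\,\hat w^R$, and for the multiplier sign $\lambda > 0$ that could threaten boundary-invexity this forces $\hat w^R < 0$ outright, which is infeasible by Lemma~\ref{lemma:minfeas}. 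No parameter-dependent wedge analysis is required. Your interior case is correct but, once the substitution is made, it is also unnecessary: the paper works directly with the equality-constrained problem and never splits on interior versus boundary.
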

\begin{proof}
(\ref{nlpi}) takes the following form for $i=1$:

\begin{align*}
& \max ~c_1(\bm g\bm w-\bm gw^R-\bm bw^I) + c_2(\frac{\bm g}{\bm w}((w^R)^2+(w^I)^2) - \bm gw^R + \bm bw^I) \\
& s.t. ~(w^R)^2 + (w^R)^2 = (\bm {v^l}_{})^2\bm w
\end{align*}

which can be rewritten as

\begin{align*}
& \max ~\bm c_1(\bm g\bm w-\bm gw^R-\bm bw^I) + \bm c_2(\frac{\bm g}{\bm w}(\bm {v^l}_{})^2\bm w - \bm gw^R + \bm bw^I) \\
& s.t. ~(w^R)^2 + (w^I)^2 = (\bm {v^l}_{})^2\bm w
\end{align*}

The KKT conditions for this problem are:

\begin{align}
& -\bm g(\bm c_1+\bm c_2) = 2\lambda \hat w^R \label{g1kkt1} \\
& \bm b(\bm c_2-\bm c_1) = 2\lambda \hat w^I \\
& ((\hat w^R)^2 + (\hat w^I)^2 = (\bm {v^l}_{})^2\bm w
\end{align}

The solution of this system can violate boundary-invexity only if $\lambda > 0$. It can be seen from (\ref{g1kkt1}) that $\hat w^R < 0$ if $\lambda > 0$. But since by Lemma \ref{lemma:minfeas} all points $(w^R,w^I)$ such that $w^R < 0.77\bm w$ are infeasible, $(\hat w^R,\hat w^I)$ is infeasible.
\end{proof}
\qed

\paragraph{$p_{21}$ lower bound}\label{lemma:pbnd}

Consider constraint (\ref{pbound}). Let $g_2 = (\bm{p}_2^g)^l - \bm p_2^d - \frac{\bm g}{\bm w}((w^R)^2+(w^I)^2) + \bm gw^R - \bm bw^I$.

\begin{lemma}
	KKT points of problem (\ref{nlpi}), $i = 2$ do not violate the boundary-invexity for Model \ref{model:ac_opf_1}.
\end{lemma}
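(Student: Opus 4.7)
My strategy is to mirror the argument used for Lemma \ref{lemma:wbnd} and reduce the KKT system on $g_2 = 0$ to a one-variable equation in $w^R$, then invoke Lemma \ref{lemma:minfeas}. First, I will rewrite (\ref{nlpi}) for $i=2$ by substituting the constraint $g_2 = 0$ directly into the objective: on the boundary, the expression $\frac{\bm g}{\bm w}((w^R)^2+(w^I)^2) - \bm g w^R + \bm b w^I$ equals the constant $(\bm p^g_2)^l - \bm p^d_2$, so the $\bm c_2$-term collapses and the objective becomes an affine function of $(w^R, w^I)$ with gradient $(-\bm c_1 \bm g,\, -\bm c_1 \bm b)$.

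Second, I will compute $\nabla g_2 = \bigl(\bm g(1 - \tfrac{2w^R}{\bm w}),\; -\bm b - \tfrac{2\bm g w^I}{\bm w}\bigr)$ and write the KKT condition $\nabla f|_{g_2=0} = \lambda \nabla g_2$. The first component decouples in $w^R$ and yields, after dividing by $\bm g$,
\[
-\bm c_1 = \lambda\left(1 - \tfrac{2 w^R}{\bm w}\right),
\qquad\text{so}\qquad
\hat w^R = \tfrac{\bm w}{2}\bigl(1 + \bm c_1/\lambda\bigr).
\]
I will then identify the sign of $\lambda$ that corresponds to a violation of condition 2 of Definition \ref{def:bi} (analogous to the ``$\lambda > 0$'' case in Lemma \ref{lemma:wbnd}) and show that for this sign $\bm c_1/\lambda$ pushes $\hat w^R$ below $\bm w/2 < 0.77 \bm w$, so Lemma \ref{lemma:minfeas} gives infeasibility and condition 1 of Definition \ref{def:bi} holds. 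The second KKT equation then determines $\hat w^I$ as a consistency check.

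The main obstacle is that, unlike the circle $g_1=0$ which is centered at the origin, the zero set of $g_2$ is an off-center circle centered at $(\bm w/2,\, -\bm b\bm w/(2\bm g))$, so the two KKT critical points on this circle sit on opposite sides of the line $w^R = \bm w/2$. If the bad-sign branch gives $\hat w^R > \bm w/2$ rather than $\hat w^R < \bm w/2$, Lemma \ref{lemma:minfeas} is not immediately applicable, and one must combine the value of $\hat w^R$ from the first KKT equation with $\hat w^I$ from the second and check whether the $w_2$ upper bound in (\ref{wbound}) or the angle bound (\ref{tbound}) together with the assumptions $|\bm\theta^{l,u}| \leq \pi/6$ and $\bm v^{l,u} \in [0.95,1.05]$ force infeasibility. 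Verifying which branch is the bad one, and completing this cross-check with the auxiliary bounds when needed, is the delicate step of the proof.
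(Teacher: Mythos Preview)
Your proposal is correct and follows essentially the same route as the paper: substitute $g_2=0$ into the objective so the $\bm c_2$-term becomes constant, write the KKT system for the resulting affine objective against $\nabla g_2$, solve the first component for $\hat w^R$ in terms of $\lambda$, and then invoke Lemma~\ref{lemma:minfeas}. The paper obtains $\hat w^R = \tfrac{\bm w}{2} - \tfrac{\bm c_1 \bm w}{2\lambda}$ (their sign convention) and observes that the boundary-invexity-violating sign of $\lambda$ forces $\hat w^R < \bm w/2 < 0.77\,\bm w$, hence infeasibility; your ``main obstacle'' contingency (the bad branch landing at $\hat w^R > \bm w/2$) does not arise once the sign is pinned down, since $\bm c_1 > 0$.
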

\begin{proof}
(\ref{nlpi}) takes the following form for $i=2$:

\begin{align*}
& \max ~\bm c_1(\bm g\bm w-\bm gw^R-\bm bw^I) + \bm c_2(\frac{\bm g}{\bm w}((w^R)^2+(w^I)^2) - \bm gw^R + \bm bw^I) \\
& s.t. ~\frac{\bm g}{\bm w}((w^R)^2 + (w^I)^2) = (\bm{p}_2^g)^l - \bm p_2^d+\bm gw^R-\bm bw^I
\end{align*}

which can be rewritten as

\begin{align*}
& \max ~\bm c_1(\bm g\bm w-\bm gw^R-\bm bw^I) + \bm c_2((\bm{p}_2^g)^l - \bm p_2^d) \\
& s.t. ~\frac{\bm g}{\bm w}((w^R)^2 + (w^I)^2) = (\bm{p}_2^g)^l - \bm p_2^d+\bm gw^R-\bm bw^I
\end{align*}

The KKT conditions for this problem are:

\begin{align*}
\bm c_1\bm g &= -\lambda(\frac{2\bm g\hat w^R}{\bm w} - \bm g) \\
\bm c_1\bm b &= -\lambda(\frac{2\bm g\hat w^I}{\bm w} + \bm b) \\
~\frac{\bm g}{\bm w}((\hat w^R)^2 + (\hat w^I)^2) &= (\bm{p}_2^g)^l - \bm p_2^d+\bm g\hat w^R-\bm b\hat w^I
\end{align*}
and the first equation implies that
$$\hat w^R = -\frac{\bm c_1\bm w}{2\lambda} + \frac{\bm w}{2}.$$

The solution of this system can violate boundary-invexity only if $\lambda > 0$. Then $\hat w^R < \frac{\bm w}{2}$. But since by Lemma \ref{lemma:minfeas} all points $(w^R,w^I)$ such that $w^R < 0.77\bm w$ are infeasible, $(\hat w^R,\hat w^I)$ is infeasible.
\end{proof}
\qed

\paragraph{$q_{21}$ lower bound}

Consider constraint (\ref{qbound}). Let $g_3 = (\bm{q}_2^g)^l - \bm q_2^d +\frac{\bm b}{\bm w}((w^R)^2+(w^I)^2) - \bm bw^R - \bm gw^I$.

\begin{lemma}\label{lemma:qbnd}
	KKT points of problem (\ref{nlpi}), $i = 3$ do not violate the boundary-invexity for Model \ref{model:ac_opf_1}.
\end{lemma}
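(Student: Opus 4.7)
The plan is to follow the same template as Lemmas \ref{lemma:wbnd} and \ref{lemma:pbnd}. First I would instantiate (NLP$_{2i}$) for $i=3$: maximize the cost of Model \ref{model:ac_opf_1} subject to the equality $-\frac{\bm b}{\bm w}((w^R)^2+(w^I)^2) + \bm bw^R + \bm gw^I = (\bm q_2^g)^l - \bm q_2^d$. Solving this for the quadratic term and substituting into the objective cancels the quadratic $\bm c_2$-contribution: the $\bm c_2$-block becomes affine in $w^I$ alone (plus a constant in $(\bm q_2^g)^l-\bm q_2^d$), while the $\bm c_1$-block is already affine. In particular, the partial derivative of the reduced objective with respect to $w^R$ is exactly $-\bm c_1\bm g$.

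Next I would write the KKT stationarity condition for the equality-constrained problem using the same sign convention as in Lemma \ref{lemma:pbnd} (so that the boundary-invexity-violating case corresponds to $\lambda>0$). Differentiating $g_3$ with respect to $w^R$ yields $\tfrac{2\bm b\hat w^R}{\bm w}-\bm b$, so the $w^R$-stationarity equation reads
$$\bm c_1\bm g \;=\; \lambda\,\bm b\!\left(\frac{2\hat w^R}{\bm w}-1\right),$$
which rearranges to $\hat w^R = \tfrac{\bm w}{2} + \tfrac{\bm c_1\bm g\bm w}{2\lambda\bm b}$.

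In the offending case $\lambda>0$ the standard sign conventions of an inductive transmission line ($\bm g>0$, $\bm b<0$), together with $\bm c_1>0$ and $\bm w>0$, make the correction term strictly negative, so $\hat w^R<\bm w/2<0.77\,\bm w$. By Lemma \ref{lemma:minfeas} the candidate point $(\hat w^R,\hat w^I)$ is then infeasible for Model \ref{model:ac_opf_1}, which is precisely the first clause of Definition \ref{def:bi}, and hence boundary-invexity is not violated.

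The main obstacle, relative to Lemma \ref{lemma:pbnd}, is the extra factor of $\bm b$ in the KKT equation: the direction in which $\hat w^R$ is pushed away from $\bm w/2$ depends on $\mathrm{sign}(\bm b)$. To avoid implicitly assuming an inductive line, one can organise the argument as a case split on $\mathrm{sign}(\bm b)$, ruling out the $\bm b>0$ branch from the passive series-impedance model underlying $\bm Y=\bm g+\bm i\bm b$. Once the sign is pinned down, the remainder of the proof is a verbatim adaptation of the KKT bookkeeping carried out in Lemma \ref{lemma:pbnd}.
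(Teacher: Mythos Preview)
Your plan is essentially the paper's: substitute the equality into the objective so that only the affine $\bm c_1$-block carries a $w^R$-dependence, write the $w^R$-component of the KKT stationarity, solve for $\hat w^R$, and conclude $\hat w^R<\bm w/2<0.77\,\bm w$ via Lemma~\ref{lemma:minfeas}. The one difference is that the paper first divides the constraint through by $\bm b$ before writing the KKT system, so the $w^R$-equation becomes $\bm c_1\bm g=-\lambda\bigl(\tfrac{2\hat w^R}{\bm w}-1\bigr)$ with no $\bm b$ visible and the remainder is verbatim Lemma~\ref{lemma:pbnd}; you instead keep $\bm b$ explicit and then argue about its sign. On that last point you do not actually need the passive-impedance heuristic: the quadratic part of $g_3$ is $\tfrac{\bm b}{\bm w}\bigl((w^R)^2+(w^I)^2\bigr)$, so $g_3\le 0$ is non-convex---and therefore enters the boundary-invexity check at all---only when $\bm b<0$, which disposes of the $\bm b>0$ branch without any appeal to the physics.
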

\begin{proof}
(\ref{nlpi}) takes the following form for $i=3$:

\begin{align*}
& \max ~\bm c_1(\bm g\bm w-\bm gw^R-\bm bw^I) + c_2(\frac{\bm g}{\bm w}((w^R)^2+(w^I)^2) - \bm gw^R + \bm bw^I) \\
& s.t. ~-\frac{\bm b}{\bm w}((w^R)^2 + (w^I)^2) = (\bm{q}_2^g)^l - \bm q_2^d-\bm bw^R-\bm gw^I
\end{align*}

which can be rewritten as

\begin{align*}
& \max ~\bm c_1(\bm gw-\bm gw^R-\bm bw^I) + \frac{\bm c_2}{\bm b}((\bm b^2+\bm g^2)w^I - \bm g((\bm{q}_2^g)^l - \bm q_2^d)) \\
& s.t. ~\frac{(w^R)^2 + (w^I)^2}{\bm w} = -\frac{(\bm{q}_2^g)^l - \bm q_2^d}{\bm b}+w^R+\frac{\bm g}{\bm b}w^I
\end{align*}

The KKT conditions for this problem are:

\begin{align*}
\bm c_1\bm g &= -\lambda(\frac{2\hat w^R}{\bm w} - 1) \\
-\bm c_1\bm b - \frac{\bm c_2(\bm b^2+\bm g^2)}{\bm b} &= -\lambda(\frac{2\hat w^I}{\bm w} - \frac{\bm g}{\bm b}) \\
~\frac{(\hat w^R)^2 + (\hat w^I)^2}{\bm w} &= -\frac{(\bm{q}_2^g)^l - \bm q_2^d}{\bm b}+\hat w^R+\frac{\bm g}{\bm b}\hat w^I
\end{align*}
and the first equation implies that
$$\hat w^R = -\frac{\bm c_1\bm g\bm w}{2\lambda} + \frac{\bm w}{2}.$$

The solution of this system can violate boundary-invexity only if $\lambda > 0$. Then $\hat w^R < \frac{\bm w}{2}$. But since by Lemma \ref{lemma:minfeas} all points $(w^R,w^I)$ such that $w^R < 0.77\bm w$ are infeasible, $(\hat w^R,\hat w^I)$ is infeasible.
\end{proof}
\qed

We now consider the thermal limit constraint (\ref{therm_lim2}). 

\begin{lemma}\label{lemma:therm}
	If constraint (\ref{therm_lim2}) is non-redundant in a given subset, it is locally convex in this subset.
\end{lemma}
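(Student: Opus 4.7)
The plan is to compute the Hessian of the LHS of (\ref{therm_lim2}) and show it is positive semidefinite wherever the constraint is non-redundant, using the ambient bounds of Model~\ref{model:ac_opf_1} to localize the region.

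First I would simplify the LHS. Expanding the two squared terms, the cross products in $\bm g\bm b$ cancel pairwise and the remainder regroups as
\[
|S_{21}|^2 \;=\; \frac{\bm g^2+\bm b^2}{\bm w^2}\bigl[(w^R)^2+(w^I)^2\bigr]\bigl[(w^R-\bm w)^2+(w^I)^2\bigr].
\]
Writing $z=w^R+\bm i w^I$ this is $\frac{\bm g^2+\bm b^2}{\bm w^2}|z|^2|z-\bm w|^2$, exhibiting the level curves as Cassini ovals with foci at $0$ and $\bm w$. Since the prefactor is a positive constant, local convexity of the constraint reduces to local convexity of the polynomial $h(w^R,w^I):=|z|^2|z-\bm w|^2$.

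Next I would compute $\nabla^2 h$. Shifting coordinates to $\xi=w^R-\bm w/2$, $\eta=w^I$ centers the two foci at $\pm\bm w/2$ and gives
\[
\nabla^2 h = \begin{pmatrix} 12\xi^2+4\eta^2-\bm w^2 & 8\xi\eta \\ 8\xi\eta & 4\xi^2+12\eta^2+\bm w^2 \end{pmatrix},
\]
with determinant $48(\xi^2+\eta^2)^2+8\bm w^2(\xi^2-\eta^2)-\bm w^4$. The $(2,2)$ entry is always strictly positive, so positive semidefiniteness reduces to the two inequalities $12\xi^2+4\eta^2\geq \bm w^2$ and $\det\nabla^2 h\geq 0$.

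Finally I would verify both inequalities throughout the non-redundant subset. Under the standing assumptions $0.95\leq\bm v^l\leq\bm v^u\leq 1.05$, $|\bm\theta^{l,u}|\leq\pi/6$, and $\bm w\in[0.9025,1.1025]$, the voltage lower bound $(w^R)^2+(w^I)^2\geq(\bm v^l_2)^2\bm w$, combined with the angle bound $|w^I|\leq w^R/\sqrt{3}$ and the infeasibility result $w^R\geq 0.77\bm w$ of Lemma~\ref{lemma:minfeas}, confines $(\xi,\eta)$ to an annular sector where $\xi^2+\eta^2$ is bounded below by a multiple of $\bm w^2$ sufficient to clear both thresholds simultaneously. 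The main obstacle is the determinant inequality, since the $8\bm w^2(\xi^2-\eta^2)$ term can turn negative when $\eta$ approaches its angle cap; the argument must couple $\eta^2\leq(\xi+\bm w/2)^2/3$ with the voltage lower bound so that $\eta^2-\xi^2$ is dominated by a positive contribution from $(\xi^2+\eta^2)^2$. If this uniform Hessian-PSD bound proves too brittle, I would weaken the target to local convexity of the sublevel set $\{h\leq\bm s^u\}$, which reduces to the scalar bordered-Hessian inequality $\nabla h^{\top}\operatorname{adj}(\nabla^2 h)\,\nabla h\geq 0$ at each boundary point---a relaxed condition more amenable to the same bounds.
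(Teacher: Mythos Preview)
Your simplification of the left-hand side to $\dfrac{\bm g^2+\bm b^2}{\bm w^2}\,|z|^2|z-\bm w|^2$ is correct and in fact cleaner than the paper's initial algebra; the Hessian you compute in the centered coordinates $(\xi,\eta)$ is also correct. From that point on, however, your route diverges from the paper's and is left incomplete at the decisive step.

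The paper does \emph{not} attempt to show that the constraint function is convex. Instead it treats the boundary equation as a quadratic in $s=(w^R)^2+(w^I)^2$, solves it, and writes the constraint as $(w^I)^2\le\phi(w^R)$ for an explicit one-variable function $\phi$. A direct second-derivative computation then shows $\phi''(w^R)<0$ whenever $w^R>\bigl(\tfrac{1}{3\sqrt3}+\tfrac12\bigr)\bm w\approx 0.69\,\bm w$, a threshold that is \emph{uniform in $\bm s^u$} (obtained by maximizing over the thermal-limit parameter). Since Lemma~\ref{lemma:minfeas} already forces $w^R>0.77\,\bm w$, the sublevel set is convex throughout the non-redundant region. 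This reduction to a one-dimensional concavity check is what makes the argument close cleanly.

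Your primary plan targets a strictly stronger statement---positive semidefiniteness of $\nabla^2 h$ on the feasible region---and the crucial inequality is asserted rather than established. A uniform lower bound on $\xi^2+\eta^2$ alone cannot work: at the feasible point $\eta=0$, $w^R=0.95\sqrt{\bm w}$, $\bm w=1.1025$ one has $(\xi^2+\eta^2)/\bm w^2\approx 0.164$, well below the $1/4$ needed to make the determinant nonnegative for arbitrary direction; you correctly flag that $\xi^2-\eta^2$ must be brought in, but the coupled estimate is never carried out. Your fallback to the bordered-Hessian condition on the level set is the right relaxation and is essentially the paper's $\phi''<0$ check written in different coordinates, but you leave it as a sketch. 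In short: your Cassini-oval reformulation is an improvement, your Hessian route may well succeed but needs a genuine two-variable optimization you have not done, and the paper's one-variable reduction is both simpler and complete.
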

\begin{proof}
Consider the boundary of the set defined by constraint (\ref{therm_lim2}). It is given by:

\begin{align*}
& (\frac{\bm g}{\bm w}((w^R)^2+(w^I)^2) - \bm gw^R + \bm bw^I)^2 + (-\frac{\bm b}{\bm w}((w^R)^2+(w^I)^2) + \bm bw^R + \bm gw^I)^2 \\
& - \bm s^u = \frac{\bm g^2}{\bm w^2}s^2 + \frac{2\bm g}{\bm w}s(\bm bw^I-\bm gw^R) + (\bm bw^I-\bm gw^R)^2 + \frac{\bm b^2}{\bm w^2}s^2 - \frac{2\bm b}{\bm w}s(\bm bw^R+\bm gw^I) \\
& + (\bm bw^R+\bm gw^I)^2 - \bm s^u = \frac{|\bm Y|}{\bm w^2}s^2 + \frac{2}{\bm w}s(-\bm g^2w^R-\bm b^2w^R) + (\bm bw^I-\bm gw^R)^2 \\
& + (\bm bw^R+\bm gw^I)^2 - \bm s^u = \frac{|\bm Y|}{\bm w^2}s^2 - \frac{2w^R|\bm Y|}{\bm w}s + |\bm Y|s - \bm s^u = 0
\end{align*}

where $s = (w^R)^2+(w^I)^2$ and $|\bm Y| = \bm g^2 + \bm b^2$. This equation has the following solutions:

\begin{align*}
& s = \frac{\bm w}{2}(2w^R - \bm w - \sqrt{(2w^R-\bm w)^2 + \frac{4\bm s^u}{|\bm Y|}}) \text{ and} \\
& s = \frac{\bm w}{2}(2w^R - \bm w + \sqrt{(2w^R-\bm w)^2 + \frac{4\bm s^u}{|\bm Y|}})
\end{align*}

The first equation has no solution since $s$ is non-negative and the right-hand side is negative. Now we can write the thermal limit constraint as:

$$(w^I)^2 \leq \frac{\bm w}{2}(2w^R - \bm w + \sqrt{(2w^R-\bm w)^2 + \frac{4\bm s^u}{|\bm Y|}}) - (w^R)^2$$

Let $R = \sqrt{(2w^R-\bm w)^2 + \frac{4\bm s^u}{|\bm Y|}}$ and $\phi(w^R) = \frac{\bm w}{2}(2w^R - \bm w + R) - (w^R)^2$. Constraint (\ref{therm_lim2}) describes a convex set if $\phi(w^R)$ is concave. To obtain the conditions for its concavity, we will calculate the second derivative:

\begin{align*}
& \phi'(w^R) = \frac{\bm w}{2}(2+R') - 2w^R \\
& \phi''(w^R) = \frac{\bm w}{2}R'' - 2 = \frac{\bm w}{2}\frac{4R - \frac{4(2w^R-\bm w)^2}{R}}{R^2} - 2
\end{align*}

A function is concave if its second derivative is negative:

\begin{align*}
& \frac{\bm w}{2}\frac{4R - \frac{4(2w^R-\bm w)^2}{R}}{R^2} - 2 < 0 \\
& R^2 - (2w^R-\bm w)^2 < \frac{R^3}{\bm w}
\end{align*}

Observe that, from the definition of $R$, the left hand side of this inequality is equal to $\frac{4\bm s^u}{|\bm Y|}$:

\begin{align*}
& \frac{4\bm s^u}{|\bm Y|} < \frac{R^3}{\bm w} ~\Leftrightarrow~ (\frac{4\bm s^u\bm w}{|\bm Y|})^{\frac{2}{3}} < R^2  ~\Leftrightarrow~\\
& \sqrt[3]{(\frac{4\bm s^u\bm w}{|\bm Y|})^2} < (2w^R-\bm w)^2 + \frac{4\bm s^u}{|\bm Y|}  ~\Leftrightarrow~\\
& w^R > \frac{1}{2}\sqrt{\sqrt[3]{(\frac{4\bm s^u\bm w}{|\bm Y|})^2} - \frac{4\bm s^u}{|\bm Y|}}+ \frac{\bm w}{2}
\end{align*}

Let $\psi(x) = x^{\frac{2}{3}}\bm w^{\frac{2}{3}} - x$. Find the stationary point of $\psi(x)$:

\begin{align*}
& \psi'(x) = \bm w^{\frac{2}{3}}\frac{2}{3}\frac{1}{\sqrt[3]{x}} - 1 = 0 \\
& x = \frac{8\bm w^2}{27}
\end{align*}

To verify the second order optimality condition, calculate the second derivative:

$$\psi''(x) = -\frac{2}{9}\bm w^{\frac{2}{3}}\frac{1}{\sqrt{x^4}} < 0$$

Hence $\psi(x)$ is concave and

$$\psi(\frac{8\bm w^2}{27}) = (\frac{8\bm w^2}{27})^{\frac{2}{3}}\bm w^{\frac{2}{3}} - \frac{8\bm w^2}{27} = \frac{4\bm w^2}{27}.$$

We have shown that $\psi(x) \leq \frac{4\bm w^2}{27} ~\forall x > 0$. Then we can guarantee that constraint (\ref{therm_lim2}) is convex if 
$$w^R > \bm w(\frac{1}{3\sqrt{3}} + 0.5).$$ 
Since $(\frac{1}{3\sqrt{3}} + 0.5) < 0.77$ and, by Lemma \ref{lemma:minfeas}, all $(w^R,w^I)$ such that $w^R < 0.77\bm w$ are infeasible, (\ref{therm_lim2}) is convex everywhere where it is non-redundant.
\end{proof}
\qed

\begin{corollary}
	Model \ref{model:ac_opf_1} is boundary-invex.
\end{corollary}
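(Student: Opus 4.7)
My plan is to deduce the corollary by combining the four preceding lemmas, which together cover each of the non-convex constraints of Model \ref{model:ac_opf_1}: the thermal limit \eqref{therm_lim2}, the real power lower bound \eqref{pbound}, the reactive power lower bound \eqref{qbound}, and the voltage magnitude lower bound \eqref{wbound}. Boundary-invexity of Model \ref{model:ac_opf_1} amounts to verifying Definition \ref{def:bi} separately for each of these four problems \eqref{nlpi}, so I would simply dispatch each one to the lemma that treats it.

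For the three lower-bound constraints I would directly invoke Lemmas \ref{lemma:wbnd}, \ref{lemma:pbnd}, and \ref{lemma:qbnd}. Each of these shows that any KKT point of the corresponding boundary problem \eqref{nlpi} that could potentially violate boundary-invexity (namely, one with a negative KKT multiplier in the maximization sense) forces $\hat w^R$ to lie strictly below $\bm w/2$, and therefore strictly below $0.77\bm w$. By Lemma \ref{lemma:minfeas}, such a point is infeasible for Model \ref{model:ac_opf_1}, which is exactly condition 1 of Definition \ref{def:bi}.

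For the thermal limit \eqref{therm_lim2}, I would appeal to Lemma \ref{lemma:therm}: wherever the constraint is non-redundant, it defines a locally convex feasible region. A locally convex active constraint fits case (2a) of the proof of Lemma \ref{lemma_kkt_locmax}, i.e.\ any KKT point of the associated boundary problem is automatically a local maximum of \eqref{nlp}; this is condition 3 of Definition \ref{def:bi}. Since every non-convex constraint of Model \ref{model:ac_opf_1} then satisfies at least one condition of the definition at each of its KKT points, boundary-invexity follows; combined with the main theorem of Section \ref{sec:main}, Model \ref{model:ac_opf_1} is in fact KT-invex.

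The main obstacle is essentially bookkeeping rather than mathematics: confirming that the list of four non-convex constraints is exhaustive (the remaining constraints in Model \ref{model:ac_opf_1} are either linear bounds or the convex phase-angle cone \eqref{tbound}), and that the $w^R$-thresholds emerging from Lemmas \ref{lemma:wbnd}, \ref{lemma:pbnd}, \ref{lemma:qbnd} are all dominated by the infeasibility threshold $0.77\bm w$ of Lemma \ref{lemma:minfeas}. If any constraint arose whose KKT multiplier sign could not be pinned down on the safe side of $0.77\bm w$, the plan would require a supplementary argument; but the preceding lemmas already rule this out, so the corollary follows directly.
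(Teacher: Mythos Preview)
Your proposal is correct and follows essentially the same route as the paper: invoke Lemmas~\ref{lemma:wbnd}--\ref{lemma:therm} to handle each of the four non-convex constraints and conclude via Definition~\ref{def:bi}. Your treatment of the thermal limit~\eqref{therm_lim2} via condition~3 of Definition~\ref{def:bi} is in fact more explicit than the paper's own proof, which somewhat loosely folds Lemma~\ref{lemma:therm} into the blanket claim that ``all KKT points for the auxiliary \eqref{nlpi} problems are infeasible''; your argument that local convexity of the constraint forces any such KKT point to be a local maximum (hence satisfies condition~3) is the cleaner justification.
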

\begin{proof}
	Based on Lemmas \ref{lemma:wbnd}-\ref{lemma:therm}, we can show that all KKT points for the auxiliary \ref{nlpi} problems are infeasible with respect to Model \ref{model:ac_opf_1}.
	Based on Definition \ref{def:bi}, boundary-invexity is established.
\end{proof}
\qed

\section{Conclusion}\label{sec:conclusion}
Given a non-convex optimization problem, boundary-invexity captures the behavior of the objective function on the boundary of its feasible region. 
In this work, we show that boundary-invexity is a necessary condition for KT-invexity, that becomes sufficient in the two-dimensional case. 
Unlike conventional invexity conditions, boundary-invexity can be verified algorithmically and in some cases in polynomial-time.
This is a first step in extending the reach of interior-point methods to non-convex problems. Future research directions include extending the sufficiency proof to the n-dimensional case and deriving conditions for checking the connectivity of non-convex sets.

	\bibliographystyle{siamplain}
	\bibliography{invexity}   

\end{document}